\newtheorem{theorem}{Theorem}[section]
\newtheorem{corollary}[theorem]{Corollary}
\newtheorem{claim}[]{Claim}
\newtheorem{lemma}[theorem]{Lemma}
\newtheorem{proposition}[theorem]{Proposition}
\theoremstyle{definition}
\newtheorem{definition}[theorem]{Definition}
\newtheorem{remark}[theorem]{Remark}
\newtheorem*{acknowledgements}{Acknowledgements}
\numberwithin{equation}{section}
\newcommand{\al}{\alpha}
\newcommand{\V}{\mathcal{V}}
\newcommand{\R}{\mathbb{R}}
\newcommand{\N}{\mathbb{N}}
\newcommand{\mH}{\mathcal{H}}
\newcommand{\F}{\mathcal{F}}
\newcommand{\A}{\mathcal{A}}
\newcommand{\de}{\delta}
\newcommand{\mZ}{\mathbb{Z}}
\newcommand{\Z}{\mathcal{Z}}
\newcommand{\mS}{\mathcal{S}}
\newcommand{\M}{\mathbf{M}}
\newcommand{\bL}{\mathbf{L}}
\newcommand{\mf}{\mathbf{f}}
\newcommand{\mF}{\mathbf{F}}
\newcommand{\mI}{\mathbf{I}}
\newcommand{\tB}{\widetilde{B}}
\newcommand{\tC}{\widetilde{C}}
\newcommand{\spt}{\operatorname{spt}}
\newcommand{\dist}{\operatorname{dist}}
\newcommand{\inj}{\operatorname{inj}}
\newcommand{\interior}{\operatorname{int}}
\newcommand{\Ric}{\operatorname{Ric}}
\newcommand{\Clos}{\operatorname{Clos}}
\newcommand{\rom}[1]{\expandafter\romannumeral #1}
\newcommand{\an}{\textnormal{An}^G}
\title[Equivariant min-max hypersurface in $G$-manifolds with positive Ricci curvature]{Equivariant min-max hypersurface in $G$-manifolds with positive Ricci curvature}
\author{Tongrui Wang}
\address{Institute for Theoretical Sciences, Westlake Institute for Advanced Study, Westlake University, Hangzhou, Zhejiang, 310024, China}
\email{wangtongrui@westlake.edu.cn}
\begin{document}
\maketitle
\begin{abstract}
	In this paper, we consider a connected orientable closed Riemannian manifold $M^{n+1}$ with positive Ricci curvature. 
	Suppose $G$ is a compact Lie group acting by isometries on $M$ with $3\leq {\rm codim}(G\cdot p)\leq 7$ for all $p\in M$. 
	Then we show the equivariant min-max $G$-hypersurface $\Sigma$ corresponding to the fundamental class $[M]$ is a multiplicity one minimal $G$-hypersurface with a $G$-invariant unit normal and $G$-equivariant index one. 
	As an application, we are able to establish a genus bound for $\Sigma$, a control on the singular points of $\Sigma/G$, and an upper bound for the (first) $G$-width of $M$ provided $n+1=3$ and the actions of $G$ are orientation preserving. 
\end{abstract}

\section{Introduction}

Given a connected orientable closed Riemannian manifold $(M^{n+1}, g_{_M})$, minimizing the area within a non-trivial homology class is a natural way to construct minimal hypersurfaces (c.f. \cite{federer2014geometric}\cite{simon1983lectures}). 
However, if $M$ has positive Ricci curvature, it follows from the stability inequality that this minimization method can not be applied. 
In the 1960s, Almgren \cite{almgren1962homotopy}\cite{almgren1965theory} proposed the {\em min-max theory} to find minimal submanifolds in the most general situation. 
Subsequently, 
the regularity for min-max hypersurfaces was improved by Pitts \cite{pitts2014existence} ($n\leq 5$) and Schoen-Simon \cite{schoen1981regularity} ($n= 6$). 
Indeed, for $n\geq 7$, they showed the min-max minimal hypersurface is smooth embedded except for a singular set of codimension $7$. 

Due to the generality and abstractness of Almgren-Pitts min-max theory, many of the geometric properties of min-max hypersurfaces 
have not been understood until recently. 
For instance, in a closed manifold with positive Ricci curvature, a series of studies were set out to characterize the min-max hypersurfaces generated from one-parameter families. 
Specifically, using the Heegaard splitting, Marques-Neves \cite{marques2012rigidity} studied the index and genus of the min-max surface in certain $3$-manifolds. 
Additionally, they also obtained sharp estimates for the width and rigidity results. 
In higher dimensional manifold $M^{n+1}$ with positive Ricci curvature, Zhou determined the Morse index and multiplicity of the min-max hypersurface in \cite{zhou2015min} (for $3\leq n+1\leq 7$) and \cite{zhou2017min} (for $n\geq 7$). 
Subsequently, Ketover-Marques-Neves \cite{ketover2020catenoid} refined Zhou's results in dimension $3\leq n+1\leq 7$ by showing the orientability of the min-max hypersurface using the catenoid estimates. 
In particular, the min-max hypersurface is an orientable closed minimal hypersurface of Morse index one and has the least area among all orientable closed minimal hypersurfaces. 
Furthermore, without any curvature assumption, the constructions in \cite{marques2012rigidity}\cite{zhou2015min} were also employed by Mazet-Rosenberg \cite{mazet2017minimal} to show the least area minimal hypersurface is either stable or a min-max hypersurface of Morse index one. 

Given a $3$-manifold $M$ with a finite group $G$ acting by isometries, Pitts-Rubinstein \cite{pitts1987applications}\cite{pitts1988equivariant} first assert the existence of a $G$-invariant minimal surface with estimates on its index and genus. 
The existence and regularity for minimal $G$-invariant surfaces (abbreviated as {\em $G$-surfaces}) were recently confirmed by Ketover \cite{ketover2016equivariant} using the equivariant min-max under the smooth setting. 
More generally, suppose $M^{n+1}$ is a closed Riemannian manifold with a compact Lie group $G$ acting by isometries so that $3\leq {\rm codim}(G\cdot p)\leq 7$, $\forall p\in M$. 
The equivariant min-max theory was also extended to this general scenario by Liu \cite{liu2021existence} in the smooth setting and by the author \cite{wang2022min}\cite{wang2022free} in the Almgren-Pitts setting. 
In particular, the author showed an isomorphism between $H_{n+1}(M;\mZ_2)$ and $\pi_1(\Z_n^G(M;\mZ_2) )$ in \cite[Theorem 9]{wang2022min}, where $\Z_n^G(M;\mZ_2)$ is the space of $G$-invariant $n$-cycles. 
Then it parallels the constructions of Almgren-Pitts \cite{pitts2014existence} that the fundamental class $[M] \in H_{n+1}(M;\mZ_2)$ is corresponding to the {\em equivariant min-max width} $W^G(M)>0$ of $M$, which can be realized by the area of some minimal $G$-invariant hypersurfaces (abbreviated as {\em $G$-hypersurfaces}) with multiplicities. 
Therefore, it now seems reasonable to investigate the geometric features of the equivariant min-max hypersurface, such as its area, multiplicity, index, and topology.

In this paper, our main result generalizes the characterization of the min-max hypersurface into an equivariant version (see Theorem \ref{Thm: main 1}). 

\begin{theorem}\label{Thm: main 1 sketch}
	Let $(M^{n+1}, g_{_M})$ be a connected orientable closed Riemannian manifold with positive Ricci curvature, and $G$ be a compact Lie group acting by isometries on $M$ so that $3\leq {\rm codim}(G\cdot p)\leq 7$ for all $p\in M$. 
	Then the equivariant min-max hypersurface $\Sigma$ corresponding to the fundamental class $[M]\in H_{n+1}(M;\mZ_2)$ is a multiplicity one minimal $G$-hypersurface so that 
	\begin{itemize}
		\item[(i)] $\Sigma$ has a $G$-invariant unit normal vector field;
		\item[(ii)] the equivariant Morse index of $\Sigma$ (Definition \ref{Def: index and G-index}) is one;
		\item[(iii)] $\Sigma$ has the least area among all closed embedded minimal $G$-hypersurfaces with $G$-invariant unit normal vector fields. 
	\end{itemize}
\end{theorem}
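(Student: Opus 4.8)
The plan is to follow the strategy of Zhou \cite{zhou2015min} and Ketover-Marques-Neves \cite{ketover2020catenoid}, carried out equivariantly, in three stages corresponding to the three conclusions. First I would establish the multiplicity one and index bound simultaneously via an equivariant version of Zhou's Morse-index estimate. The equivariant min-max width $W^G(M)$ is realized by a stationary integral varifold $V=\sum_i m_i|\Sigma_i|$ whose support is a smooth embedded minimal $G$-hypersurface away from a set of codimension $\geq 7$ inside the quotient $M/G$; the positive Ricci curvature forces every component to be two-sided with nontrivial first eigenvalue, hence $G$-equivariantly unstable. Using the equivariant analogue of the Lusternik-Schnirelmann / catenoid argument — deforming the sweepout in a neighborhood of $V$ using the first $G$-invariant eigenfunctions — one shows that if some $m_i\geq 2$ or if the total equivariant index exceeds one, the width could be strictly lowered, a contradiction. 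This step is where the Frankel-type property for $G$-hypersurfaces (two minimal $G$-hypersurfaces in a positively-Ricci-curved $M$ must intersect, since their $G$-orbits do) is needed to rule out multiple distinct components and confirm $\Sigma$ is connected with $m_1=1$.

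Second, for conclusion (i), the subtlety is that a priori $\Sigma$ carries only a $G$-invariant \emph{line field} of normals, and $G$ could act on $\Sigma$ reversing the normal along some element. Here I would invoke the catenoid/logarithmic cut-off estimates in the equivariant setting, exactly as in \cite{ketover2020catenoid}: if the normal bundle were nonorientable as a $G$-equivariant bundle, one can open up a small equivariant neck (a $G$-orbit of catenoidal necks, or a single neck on a reflecting orbit) to produce a competitor sweepout with strictly smaller $G$-width, contradicting the fact that $W^G(M)=\text{Area}(\Sigma)$. This forces the $G$-equivariant orientability of the normal bundle, i.e. the existence of a genuine $G$-invariant unit normal vector field.

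Third, for the least-area conclusion (iii), I would use the one-parameter sweepout structure: the sweepout $\{\Phi_t\}$ realizing $W^G(M)$ restricts, near any other closed embedded minimal $G$-hypersurface $\Sigma'$ with $G$-invariant unit normal, to show $\text{Area}(\Sigma')\geq W^G(M)$. Concretely, any such $\Sigma'$ separates $M$ (being two-sided and $G$-invariant, it bounds a $G$-invariant region, using $H_1(M;\mZ_2)=0$ or a direct separation argument from two-sidedness), so $\Sigma'$ itself can be used as the central slice of an admissible $G$-sweepout of $[M]$, whence $\text{Area}(\Sigma')\geq W^G(M)=\text{Area}(\Sigma)$; conversely $\Sigma$ is such a hypersurface, giving equality. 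The main obstacle throughout is the first step: adapting Zhou's multiplicity-one and index-one deformation argument to the equivariant category requires that the min-max construction sees only $G$-invariant competitors, so the index bound must be phrased in terms of the equivariant index (Definition \ref{Def: index and G-index}), and one must ensure the deformations produced by $G$-invariant eigenfunctions remain admissible $G$-sweepouts — this uses the isomorphism $\pi_1(\Z_n^G(M;\mZ_2))\cong H_{n+1}(M;\mZ_2)$ from \cite[Theorem 9]{wang2022min} and the equivariant regularity theory in an essential way, together with care near the lower-dimensional singular set of $M/G$ where the catenoid necks must be opened compatibly with the orbit structure.
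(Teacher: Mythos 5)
Your proposal has the right ingredients (Frankel property, first $G$-invariant eigenfunction, catenoid/log cut-off necks, doubling along the normal bundle), but it is missing the two steps that actually make the paper's argument close, and as written both stage 1 and stage 3 contain genuine gaps. First, the central technical tool is the construction, from an \emph{arbitrary} $\Sigma'\in\mS^G(M)$, of an explicit competitor $G$-sweepout whose \emph{maximum} mass is $\leq \mathrm{Area}(\Sigma')$ (resp.\ $<2\,\mathrm{Area}(\Sigma')$ in the non-invariant-normal case). This is done by foliating a neighborhood of $\Sigma'$ with the first $G$-invariant eigenfunction and then extending to the two sides using the equivariant min-max theorem for compact $G$-manifolds with boundary (Theorem \ref{Thm: min-max theorem for M with boundary}) together with Frankel (Lemma \ref{Lem: hypersurface in positive Ricci manifold}(v)) to force $W^G(M_i',\partial M_i')\leq \mathrm{Area}(\partial M_i')$; this is Propositions \ref{Prop: sweepout construction 1}--\ref{Prop: sweepout construction 2}. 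Your stage 1 ("deform the sweepout in a neighborhood of $V$") does not produce such a competitor --- a local deformation of an abstract min-max sequence near $V$ cannot lower the width, and multiplicity one is not obtained that way --- and your stage 3 inference "$\Sigma'$ can be used as the central slice of an admissible $G$-sweepout, whence $\mathrm{Area}(\Sigma')\geq W^G(M)$" is invalid: being a slice of some sweepout gives no control on that sweepout's maximum, which is exactly what the half-space min-max plus Frankel argument supplies. The paper's index-one proof likewise deforms this \emph{constructed} sweepout by the flow of a $G$-equivariant extension of the second eigenfunction, using that all non-central slices sit a definite amount below $W^G(M)$.

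Second, your stage 2 contradiction does not close without the parity fact: in the paper, if the min-max $\Sigma$ admits no $G$-invariant unit normal, its multiplicity is \emph{even} (proved inside Theorem \ref{Thm: min-max theorem} via locally mass-minimizing $G$-replacements, the Constancy Theorem, and White's mod-2 current result), so $W^G(M)\geq 2\,\mathrm{Area}(\Sigma)$; only then does the neck-opened doubled sweepout with maximum $<2\,\mathrm{Area}(\Sigma)$ give a contradiction. In your version you contradict "$W^G(M)=\mathrm{Area}(\Sigma)$", which is not available at that point (it is the conclusion, not a hypothesis), and if the multiplicity were one with non-invariant normal the neck estimate would contradict nothing. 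Relatedly, your assertion that positive Ricci forces every component to be two-sided is false ($\mathbb{RP}^2\subset\mathbb{RP}^3$), so the genuinely one-sided case must be treated as well; the paper handles both cases uniformly by doubling along the unit normal bundle $S\Sigma$ and opening an equivariant neck at a principal orbit, exactly where your $3\leq\mathrm{codim}(G\cdot p)$ hypothesis enters the catenoid estimate.
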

\begin{remark} 
	We make some remarks for the above theorem:
	\begin{itemize}
		\item[(i)] If $M$ has connected components $\{M_i\}_{i=1}^m$, we can take a component $M_i$ and the Lie sub-group $G_i:=\{g\in G: g\cdot M_i=M_i\}$. 
			By applying the above theorem to $M_i$ and $G_i$, we obtain a minimal $G_i$-invariant hypersurface $\Sigma_i$ of multiplicity one. 
			Additionally, one easily verifies that $G\cdot \Sigma_i\subset G\cdot M_i$ is a minimal $G$-hypersurface satisfying (i)-(iii) in Theorem \ref{Thm: main 1 sketch} with $G\cdot M_i$ in place of $M$. 
		\item[(ii)] Without the positive Ricci curvature assumption, we can combine the proof of Theorem \ref{Thm: main 1 sketch} and the constructions in \cite{mazet2017minimal} to show the existence of a minimal $G$-hypersurface of the least area (counted with multiplicity) among all minimal $G$-hypersurfaces. 
			The details will be discussed in another upcoming paper. 
	\end{itemize}
\end{remark}

The existence of a {\em $G$-invariant} unit normal can help to distinguish the min-max $G$-hypersurface $\Sigma$ and the fixed points set under certain $\mZ_2$ actions. 
For instance, consider a positive Ricci curvature $3$-ellipsoid $M$ with its major axis (on $x_1$) sufficiently long and the other principal axes bounded by $2$. 
Then the classical min-max theory shall provide the equator $\Gamma=\{x_1=0\}\cap M$ on the major axis as the min-max hypersurface. 
Although $\Gamma$ is also invariant under the $\mZ_2$-reflections $(x_1,x')\mapsto (-x_1,x')$, it can not be the min-max $\mZ_2$-hypersurface since its unit normal is not $\mZ_2$-invariant. 
An interesting question is what exactly is the min-max $\mZ_2$-hypersurface in this case, and how does it relate to $2$-min-max minimal hypersurfaces?

The characterizations of the Morse index and multiplicity for min-max hypersurfaces are crucial in the study of min-max theory. 
For instance, a key part in the proof of the Willmore conjecture by Marques-Neves \cite{marques2014min} is to show the minimal surface in $\mathbb{S}^3$ constructed by the five-parameter families of min-max has Morse index $5$. 
Additionally, by specifying generically the multiplicity (\cite{zhou2020multiplicity}) and index (\cite{marques2016morse}\cite{marques2021morse}) of min-max hypersurfaces, the multi-parameter min-max theory was used to establish the Morse theory for the area functional. 
In the equivariant case, the author \cite{wang2022equivariant} also proved general upper bounds for the $G$-index (Definition \ref{Def: index and G-index}) of equivariant min-max hypersurfaces from multi-parameter families. 
Therefore, in light of Theorem \ref{Thm: main 1 sketch} and \cite{zhou2020multiplicity}, we conjecture that for a generic $G$-invariant Riemannian metric, the minimal $G$-hypersurface constructed from $k$-parameter families of equivariant min-max shall have multiplicity one, $G$-index $k$, and a $G$-invariant unit normal.


Moreover, it has been discovered in numerous studies that the Morse index of a minimal surface is related to its topology. 
For instance, in a closed manifold with positive Ricci curvature, Choi-Schoen \cite{choi1985space} proved the area of a closed minimal surface can be bounded by its genus. 
Therefore, by Ejiri-Micallef \cite[Theorem 4.3]{ejiri2007comparison}, the index of a such minimal surface is also bounded by its genus. 
Additionally, using the {\em conformal volume}, Yau (c.f. \cite[Chapter VIII, Section 4]{schoen1994lectures}) obtained a genus bound for index one minimal surfaces in positive Ricci curvature manifolds. 
More generally, in an orientable $3$-manifold with nonnegative Ricci curvature, it follows from the sharp estimate of Ros \cite[Theorem 15]{ros2006one} that a closed orientable minimal surface of index one must have genus $\leq 3$. 
For a complete two-sided minimal surface in $\R^3$, Chodosh-Maximo showed in \cite{chodosh2016topology} that its genus and the number of ends give a lower bound on its index. 
We refer to \cite{chodosh2018topology}\cite{meeks2019bounds} for more related research.

Hence, as an application, we use the conformal volume initiated by Li-Yau \cite{li1982new} in the orbit space to show a general genus bound of the equivariant min-max surface in a $3$-manifold with positive Ricci curvature, which further indicates an upper bound of the $G$-width and a bound for the singular points of $\Sigma/G$ (see Theorem \ref{Thm: main 2}).

\begin{theorem}\label{Thm: main 2 sketch}
	Let $(M^3, g_{_M})$ be a closed connected oriented Riemannian $3$-manifold with positive Ricci curvature, and $G$ be a finite group acting on $M$ by orientation preserving isometries. 
	Then the equivariant min-max hypersurface $\Sigma$ corresponding to the fundamental class $[M]$ is a connected minimal $G$-hypersurface of multiplicity one with  
	$${\rm genus}(\Sigma )\leq 4 K, \qquad W^G(M) = {\rm Area}(\Sigma) \leq \frac{8\pi K}{c_M},$$
	where $K:= \max_{p\in M}\#G\cdot p\leq \#G$ is the number of points in a principal orbit of $M$, and $\Ric_M\geq c_M>0$. 
	Additionally, the quotient space $\pi(\Sigma)=\Sigma/G$ is an orientable surface with finite cone singular points of order $\{n_i\}_{i=1}^k$ so that $\sum_{i=1}^k (1-\frac{1}{n_i} ) \leq 4 $ and ${\rm genus}(\pi(\Sigma))\leq 3 $. 
	In particular, if $\Sigma\subset M^{prin} $, then ${\rm genus}(\Sigma)\leq 1+2K$. 
\end{theorem}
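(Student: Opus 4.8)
The plan is to use Theorem~\ref{Thm: main 1 sketch} as input and combine three ingredients: a Frankel-type argument for connectedness, a Bishop--Gromov sweepout for the width estimate, and a Li--Yau conformal volume argument (\cite{li1982new}) carried out in the orbit space $\Sigma/G$.

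\emph{Geometry of $\Sigma$ and $W^G(M)=\Area(\Sigma)$.} Since $\dim M=3$ all orbits are finite, so the equivariant min-max hypersurface $\Sigma$ of Theorem~\ref{Thm: main 1 sketch} is a smooth closed embedded minimal surface of multiplicity one; by (i) it is two-sided with a $G$-invariant unit normal $\nu$, hence $G$ acts on $\Sigma$ by orientation-preserving isometries (the ambient action is orientation preserving and fixes $\nu$), and by (ii) it has equivariant Morse index one. Because $\Ric_M>0$, Frankel's theorem forbids two disjoint closed embedded minimal hypersurfaces, so applied to the connected components of $\Sigma$ it forces $\Sigma$ to be connected; together with multiplicity one this gives $W^G(M)=\Area(\Sigma)$.

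\emph{The width estimate.} Fix $p\in M$ and sweep $M$ out by the super-level sets of the $G$-invariant Lipschitz function $x\mapsto\dist(x,G\cdot p)$; the slices are $G$-invariant hypersurfaces, the family detects the fundamental class $[M]$, and for every $r$
\[
\Area\big(\{\dist(\cdot,G\cdot p)=r\}\big)\ \le\ \sum_{q\in G\cdot p}\Area\big(\partial B_r(q)\big)\ \le\ K\cdot\frac{8\pi}{c_M},
\]
the last step being the Bishop--Gromov area comparison against the space form with $\Ric\equiv c_M$, whose geodesic spheres have area at most $8\pi/c_M$. Hence $W^G(M)\le 8\pi K/c_M$, and by the previous paragraph $\Area(\Sigma)\le 8\pi K/c_M$.

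\emph{Topology via the orbit space.} The quotient $\pi(\Sigma)=\bar\Sigma:=\Sigma/G$ is a closed connected orientable $2$-orbifold (orientable because the $G$-action on the oriented $\Sigma$ preserves orientation) whose underlying surface $|\bar\Sigma|$ has genus $\bar g$ and which has finitely many cone points of necessarily cyclic orders $n_1,\dots,n_k$ (the nonfree locus of a finite orientation-preserving action on a surface is finite); the projection $\pi$ is a branched Riemannian cover of degree $N=\max_{x\in\Sigma}\#(G\cdot x)\le K$, with $n_i\mid N$, and it is unbranched of degree exactly $K$ precisely when $\Sigma\subset M^{prin}$. The orbifold Riemann--Hurwitz formula reads $\chi(\Sigma)=N\big(2-2\bar g-\sum_{i=1}^k(1-\tfrac1{n_i})\big)$, so it suffices to establish $\bar g\le3$ and $\sum_i(1-\tfrac1{n_i})\le4$; the genus bounds for $\Sigma$ then follow by a short case analysis, giving ${\rm genus}(\Sigma)\le 4K$ in general and ${\rm genus}(\Sigma)=1+K(\bar g-1)\le 1+2K$ in the unbranched case $\Sigma\subset M^{prin}$. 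When $\Sigma\subset M^{prin}$ one reasons directly: $M^{prin}/G$ is a $3$-manifold and $M^{prin}\to M^{prin}/G$ is a local isometry, so $\Ric_{M^{prin}/G}>0$, while $\bar\Sigma$ sits inside it as a closed two-sided minimal surface that inherits Morse index one from the equivariant index of $\Sigma$, whence ${\rm genus}(\bar\Sigma)\le3$ by Ros's bound \cite[Theorem~15]{ros2006one}. In general one must run a conformal volume argument on the orbifold $\bar\Sigma$ itself: functions on $\bar\Sigma$ are the $G$-invariant functions on $\Sigma$, and since $\pi$ is a local isometry the descended Jacobi operator $\bar L=\bar\Delta+|A_\Sigma|^2+\Ric_M(\nu,\nu)$ has exactly one negative eigenvalue, with positive first eigenfunction $\bar u_1$; picking a minimal-degree conformal branched cover $f\colon|\bar\Sigma|\to\mathbb S^2$ (of degree $d\le\lfloor(\bar g+3)/2\rfloor$, equivalently a $G$-invariant conformal map $\Sigma\to\mathbb S^2$), using the Li--Yau/Hersch balancing lemma to post-compose $f$ with a conformal automorphism of $\mathbb S^2$ so that each coordinate $f^a$ is $L^2(\bar\Sigma)$-orthogonal to $\bar u_1$, one gets $\bar Q(f^a,f^a)\ge0$ for the quadratic form of $\bar L$; summing over $a$ and using $\sum_a(f^a)^2\equiv1$ together with $\int_{\bar\Sigma}\sum_a|\bar\nabla f^a|^2=8\pi d$ yields $\int_{\bar\Sigma}(|A_\Sigma|^2+\Ric_M(\nu,\nu))\le 8\pi d$, and combining this with the Gauss equation and the orbifold Gauss--Bonnet formula and refining via the embeddedness of $\Sigma$ (as in Ros's argument) gives $\bar g\le3$ and $\sum_i(1-\tfrac1{n_i})\le4$.

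\emph{Main obstacle.} The delicate step is this orbifold Ros/Li--Yau estimate. One must: (a) verify the balancing lemma can be run with the extra orthogonality to the positively weighted eigenfunction $\bar u_1$ --- a Brouwer-degree argument on the conformal group of $\mathbb S^n$, allowing the target dimension to grow if needed; (b) handle the cone points, noting that the coordinate functions $f\circ\pi$ and the lift of $\bar u_1$ are smooth on the upstairs surface $\Sigma$, so all differential-geometric identities should be computed on $\Sigma$ and transferred to $\bar\Sigma$ through the branched change of variables, with careful bookkeeping of the orbifold Gauss--Bonnet defect $\sum 2\pi(1-1/n_i)$; and (c) extract the sharp constants $3$ and $4$, which requires organizing the interplay between the gonality bound for $|\bar\Sigma|$, the curvature term $\int_{\bar\Sigma}\Ric_M(\nu,\nu)\ge c_M\Area(\bar\Sigma)$, and the embeddedness improvement into a finite case check over orbifold types. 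Granting this, the remaining assertions fall out of Riemann--Hurwitz.
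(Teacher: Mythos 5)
Your architecture for the topology statements is the same as the paper's (orbit-space Li--Yau conformal volume, balancing against the first eigenfunction, cutoffs at the cone orbits computed upstairs on $\Sigma$, orbifold Gauss--Bonnet, Riemann--Hurwitz), but the decisive quantitative step is exactly the one you ``grant'' as the main obstacle, so as written the proposal does not prove the bounds $\bar g:={\rm genus}(\pi(\Sigma))\leq 3$, $\sum_{i=1}^k(1-\tfrac1{n_i})\leq 4$, ${\rm genus}(\Sigma)\leq 4K$. In the paper this step is carried out directly and needs neither Ros's embeddedness refinement nor a case check over orbifold types: taking a conformal immersion $\phi$ of the truncated orbit surface into $\mathbb{S}^m$, a Brouwer-degree (Hersch/Li--Yau) argument produces $h\in{\rm Conf}(\mathbb{S}^m)$ with $\int_\Sigma u_1\,\eta_r(h_j\circ\phi\circ\pi)=0$ for all $j$, where $u_1$ is the $G$-invariant first eigenfunction and $\eta_r$ a logarithmic cutoff near the cone orbits; ${\rm Index}_G(\Sigma)=1$ then gives $\delta^2\Sigma(\eta_r\tilde h_j\nu)\geq 0$, and summing over $j$, using the conformal-area bound $A_c\leq 4\pi\lfloor(\bar g+3)/2\rfloor$ coming from a degree-$\lfloor(\bar g+3)/2\rfloor$ branched cover of $\mathbb{S}^2$, and letting $r\to 0$, $\epsilon\to 0$, one gets $\int_\Sigma(\Ric_M(\nu,\nu)+|A|^2)\leq 8\pi K\lfloor(\bar g+3)/2\rfloor$. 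The Gauss equation for a minimal surface plus $\Ric_M>0$ gives the strict bound $\Ric_M(\nu,\nu)+|A|^2>-2K_\Sigma$, and orbifold Gauss--Bonnet turns this into $2\bar g-2+\sum_i(1-\tfrac1{n_i})<2\lfloor(\bar g+3)/2\rfloor$, from which $\bar g\leq 3$, $\sum_i(1-\tfrac1{n_i})\leq 4$ and ${\rm genus}(\Sigma)=1+K[\bar g-1+\sum_i(1-\tfrac1{n_i})]<1+4K$ follow by inspection (your Ros argument does cover the special case $\Sigma\subset M^{prin}$, but nothing more; also note the paper shows $\Sigma^{prin}\subset M^{prin}$ via the $G$-invariant normal, so the covering degree is exactly $K$, not merely $\leq K$). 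Items (a)--(c) in your ``main obstacle'' paragraph are precisely the content of the proof of Theorem \ref{Thm: main 2}, so leaving them granted is a genuine gap.

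On the width bound your route is genuinely different: the paper does not use Bishop--Gromov at all, but gets $2c_M W^G(M)\leq\int_\Sigma\sum_{i=1}^2\Ric_M(e_i,e_i)\leq 16\pi K$ as a byproduct of the same conformal-volume computation plus Gauss--Bonnet, so the width estimate costs nothing extra once the key inequality is proved. Your level-set sweepout of $\dist(\cdot,G\cdot p)$ with the comparison ${\rm Area}(\partial B_r(q))\leq 8\pi/c_M$ is plausible and more elementary, but to use it you must verify admissibility: the slices $\partial[[\{\dist(\cdot,G\cdot p)<r\}]]$ form an $\F$-continuous family in $\Z_n^G(M;\mZ_2)$ with $F_M(\Phi)=[M]$ and, crucially, with no concentration of mass on orbits in the sense of Definition \ref{Def: no mass concentration} (e.g.\ via an equivariant smoothing of the distance function), since $W^G(M)$ in Definition \ref{Def: width of M} is an infimum only over $\mathcal{P}^G(M)$. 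That is a fixable technical point, but it is not addressed in the proposal.
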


The conformal method has been employed in many studies for the {\em volume spectrum}, i.e. the multi-parameter version of width. 
For the first width $W(M)$ in the volume spectrum, Glynn-Adey-Liokumovich \cite{glynn2017width} gave an upper bound using the min-conformal volume of the ambient manifold. 
In particular, if $M$ is a closed surface, they showed the first width $W(M)$ can be bounded by the genus and area of $M$. 
Additionally, the conformal upper bounds for the volume spectrum were proved by Wang in \cite{wang2021conformal}.

\medskip
The main idea for Theorem \ref{Thm: main 1 sketch} is as follows. 
For the closed manifold $M$ and the Lie group $G$ in Theorem \ref{Thm: main 1 sketch}, we can take any closed embedded minimal $G$-hypersurface $\Sigma$ in $M$ and use the variation of its first eigenvector field to foliate a $G$-neighborhood of $\Sigma$. 
Using a half-space version of the equivariant min-max theory (Theorem \ref{Thm: min-max theorem for M with boundary}), we argue by contradiction to show this local $G$-equivariant foliation can be extended to a continuous $G$-sweepout of $M$ with mass no more than ${\rm Area}(\Sigma)$ (if $\Sigma$ has a $G$-invariant unit normal) or $2{\rm Area}(\Sigma)$. 
Therefore, it follows from the equivariant min-max theory \cite[Theorem 8]{wang2022min} (see also \cite[Theorem 4.20]{wang2022free}) that the equivariant min-max hypersurface is the minimal $G$-hypersurface of {\em least area} in the sense of (\ref{Eq: least area}). 
Additionally, if the equivariant min-max hypersurface does not admit a $G$-invariant unit normal, it must have even multiplicity by the constructions of equivariant min-max (Theorem \ref{Thm: min-max theorem}). 
However, in this case, we can further use the catenoid estimates of Ketover-Marques-Neves \cite{ketover2020catenoid} to add small $G$-invariant cylinders in the $G$-sweepouts (Proposition \ref{Prop: sweepout construction 2}), which will strictly decrease the mass and give a contradiction. 

The above idea shares the same spirit as in \cite{zhou2015min}. 
However, since the equivariant min-max theory was already established in a continuous version \cite[Theorem 8]{wang2022min}, we do not need to invoke the smooth setting of min-max (c.f. \cite{de2013existence}) as in \cite[Section 2]{zhou2015min}, but give a more self-contained equivariant min-max construction in half spaces (Theorem \ref{Thm: min-max theorem for M with boundary}). 
Meanwhile, instead of using the discretization theorem as in \cite[Theorem 5.8]{zhou2015min}, we can more easily determine that the extension of the $G$-equivariant foliation is a $G$-sweepout.

\medskip
The article is organized as follows. 
In Section \ref{Sec: preliminary}, we collect some notations and definitions of Lie group actions and geometric measure theory. 
In particular, we also introduce the $G$-equivariant sweepouts and $G$-width of $M$ in a continuous version using the isomorphic map between $\pi_1(\Z_n^G(M;\mZ_2))$ and $H_{n+1}(M;\mZ_2)$. 
Then we introduce in Section \ref{Sec: Equivariant min-max} the equivariant min-max theory developed by the author in \cite{wang2022min}\cite{wang2022free} under the Almgren-Pitts setting with some modifications. 
In Section \ref{Sec: sweepouts}, we will generate a continuous $G$-sweepout with good properties from a given minimal $G$-hypersurface. 
Finally, the proof of the main theorem and its applications are given in Section \ref{Sec: proof main}.

\begin{acknowledgements}
	The author would like to thank Professor Gang Tian for his constant encouragement. 
	He also thanks Prof. Xin Zhou, Zhichao Wang, and Linsheng Wang for helpful discussions. 
	Part of this work was completed while the author was visiting the Department of Mathematics at Cornell University, supported by the China post-doctoral grant 2022M722844. Thanks for its hospitality and for providing a good academic environment. 
\end{acknowledgements}

\noindent
{\bf Funding.} Project funded by China Postdoctoral Science Foundation 2022M722844.

\section{Preliminary}\label{Sec: preliminary}

Let $(M^{n+1}, g_{_M})$ be an orientable connected compact Riemannian $(n+1)$-dimensional manifold and $G$ be a compact Lie group acting isometrically on $M$. 
Denote by $\mu$ a bi-invariant Haar measure on $G$ normalized to $\mu(G)=1$. 
For the case that $\partial M \neq \emptyset$, it follows from \cite[Lemma A.1]{wang2022free} that $M$ can be equivariantly and isometrically extended to a closed Riemannian manifold $(N, g_{_N})$ with $G$ acting on $N$ by isometries. 
Therefore, we can assume $M$ is a compact domain of a closed Riemannian $G$-manifold $N$. 

\subsection{Lie group actions}\label{Subsec: 2.1 Lie group actions}
To begin with, we gather some definitions of Lie group actions, most of which are referred from \cite{berndt2016submanifolds}\cite{bredon1972introduction}. 

It follows from \cite{moore1980equivariant} that there is an orthogonal representation $\rho:G\to O(L)$ and an isometric embedding $i:M \hookrightarrow \R^L $ for some $L\in\N$ so that $i$ is equivariant, i.e. $i\circ g = \rho(g)\circ i $. 
For simplicity, we regard $M$ as a subset of $\R^L$ and denote the orthogonal action of $g\in G$ on $x\in \R^L$ as $g\cdot x$. 
We say a subset (hypersurface) $A\subset M$ is a $G$-subset ($G$-hypersurface) if $g\cdot A=A$ for all $g\in G$. 

For any $p\in M$, let $G\cdot p :=\{g\cdot p : g\in G\}$ be the orbit containing $p$ and $G_p:=\{g\in G: g\cdot p = p\}$ be the isotropy group of $p$.
Note $G\cdot p$ is a closed submanifold of $M$ and $G_p$ is a Lie subgroup of $G$. 
We then say $p$ has {\em $(G_p)$ orbit type}, where $(G_p)$ is the conjugacy class of $G_p$ in $G$. 
By \cite[Proposition 2.2.4]{berndt2016submanifolds}, there is a (unique) minimal conjugacy class $(P)$ of isotropy groups so that $M^{prin} = M_{(P)}:=\{ p\in M: (G_p) = (P) \}$ is an open dense $G$-subset of $M$. 
We call any $G\cdot p\subset M^{prin}$ a {\em principal orbit} of $M$ and denote by ${\rm Cohom}(G)$ the co-dimension of a principal orbit, which is known as the {\em cohomogeneity} of the actions of $G$. 

Let $M/G$ be the quotient space, i.e. the {\em orbit space}, and $\pi$ be the projection $\pi:M \to M/G$, $p \mapsto [p]$. 
It is well-known that $M/G$ is a Hausdorff metric space with induced metric $\dist_{M/G}([p],[q]):= \dist_M(G\cdot p, G\cdot q)$. 

Denote by $B_r(p)$, $B_r([p])$, and $\mathbb{B}^k_r(p)$ the geodesic ball in $M$ (or in $N$ if $\partial M \neq\emptyset$), the metric ball in $M/G$, and the Euclidean ball in $\R^k$ respectively. 
Then we use the following notations:
 \begin{itemize}
 	\item $\mathfrak{X}(M), \mathfrak{X}(U)$: the space of smooth vector fields compact supported in $M$ or $U\subset M$; 
 	\item $\mathfrak{X}^G(M), \mathfrak{X}^G(U)$: the space of {\em $G$-vector fields} $X$ in $M$ or $U$, i.e. $g_*X=X, \forall g\in G$; 
	\item $B_\rho^G(p)$: the open geodesic tube with radius $\rho$ around the orbit $G\cdot p$ in $M$ (or in $N$ if $\partial M \neq\emptyset$);
	\item $\an(p,s,t)$: the open tube $B_t^G(p)\setminus \overline{B}^G_s(p)$. 
\end{itemize}
For any closed $G$-hypersurface $\Sigma\subset M$, denote by ${\bf N}\Sigma$ its normal bundle with $G$ acting on it by $g\cdot v:= g_*v$ for all $g\in G, v\in {\bf N}\Sigma$. 
Let $\exp_\Sigma^\perp: {\bf N}\Sigma \to M$ be the normal exponential map of $\Sigma$.
Note $\exp_\Sigma^\perp$ is a $G$-equivariant diffeomorphism in a neighborhood of $\Sigma$.

\subsection{Geometry measure theory}\label{Subsec: 2.2 geometric measure}

We refer \cite{federer2014geometric}\cite{pitts2014existence}\cite{simon1983lectures} for the following definitions:
\begin{itemize}
	\item ${\bf I}_k(M;\mathbb{Z}_2)$: the space of $k$-dimensional mod $2$ flat chains in $\mathbb{R}^L$ with support contained in $M$; 
	\item ${\mathcal Z}_n(M;\mathbb{Z}_2)$: the space of $T\in {\bf I}_n(M;\mathbb{Z}_2)$ with $T= \partial U $ for some $U\in {\bf I}_{n+1}(M;\mathbb{Z}_2)$, i.e. the boundary type mod $2$ $n$-cycles; 
	\item $\mathcal{V}_k(M)$: the weak topological closure of the space of $k$-dimensional rectifiable varifolds in $\mathbb{R}^L$ with support contained in $M$. 
\end{itemize}
Let $\mathcal{F}$ and ${\bf M}$ be the {\it flat (semi-)norm} and the {\it mass} norm in ${\bf I}_k(M;\mathbb{Z}_2)$ (\cite[4.2.26]{federer2014geometric}). 
Define the $\mF$-{\it metric} on $\mathcal{V}_k(M)$ as in \cite[Page 66]{pitts2014existence}. Then $\mF$ induces the weak topology on any mass bounded subset $\{V\in \V_k(M): \|V\|(M)\leq C \}$, where $C>0$ and $\|V\|$ is the Radon measure on $M$ induced by $V$. 

For any $T\in {\bf I}_k(M;\mathbb{Z}_2)$, we denote $|T|$ and $\|T\|$ as the integral varifold and the Radon measure induced by $T$. 
Then we define the ${\bf F}$-{\it metric} on ${\bf I}_k(M;\mathbb{Z}_2)$ by
$$ \mF (S,T):=\mathcal{F}(S-T)+{\bf F}(|S|,|T|), \quad \forall S,T\in {\bf I}_k(M;\mathbb{Z}_2) .$$
It follows from \cite[Page 68]{pitts2014existence} that for any $T, \{T_i\}_{i\in\N} \subset \Z_n(M;\mZ_2) $, 
\begin{equation}\label{Eq: F-metric}
	\lim_{i\to\infty}\mF(T_i , T)= 0 \quad \Leftrightarrow \quad\lim_{i\to\infty}\F(T_i , T)= 0 ~{\rm and }~ \lim_{i\to\infty}\M(T_i) =  \M(T).
\end{equation}
For ${\bf v}={\bf M}$, ${\bf F}$, $\F$, let ${\bf I}_k(M;{\bf v};\mathbb{Z}_2)$, ${\mathcal Z}_n(M;{\bf v};\mathbb{Z}_2)$ be the space with topology induced by ${\bf v}$. 

We say $T\in \mI_k(M;\mZ_2)$ (or $V\in \V_k(M)$) is $G$-invariant if $g_\# T=T$ ($g_\# V=V$) for all $g\in G$. 
Then we have the following subspaces of $G$-invariant elements: 
\begin{itemize}
	\item ${\bf I}_k^G(M;\mathbb{Z}_2) := \{T\in {\bf I}_k(M;\mathbb{Z}_2) : ~g_\#T= T,~\forall g\in G \}$;
	\item ${\mathcal Z}_n^G(M;\mathbb{Z}_2) := \{T\in {\mathcal Z}_n(M;\mathbb{Z}_2) : ~T=\partial U,~{\rm for~some~} U\in {\bf I}_{n+1}^G(M;\mathbb{Z}_2)  \}$; 
	\item $\mathcal{V}_k^G(M) := \{V\in \mathcal{V}_k(M) : ~g_\#V=V,~\forall g\in G\}$.
\end{itemize}
Note $\Z_n^G(M;\mZ_2) \subset \{T\in {\mathcal Z}_n(M;\mathbb{Z}_2) : ~g_\#T= T,~\forall g\in G\}$. 
Since $G$ acts by isometries, ${\bf I}_k^G(M;\mathbb{Z}_2)$, ${\mathcal Z}_n^G(M;\mathbb{Z}_2)$, and $\mathcal{V}_k^G(M)$ are closed subspaces with induced metrics $\M,\F,\mF$. 
Moreover, we have the following isoperimetric lemma (c.f. \cite[Lemma 5]{wang2022min}), which is also valid when $\partial M\neq\emptyset$. 

\begin{lemma}\label{Lem: isoperimetric}
	There are constants $\epsilon_{M}>0$, $C_{M}>1$ such that for any $T_1,T_2\in {\bf I}_n^G(M;\mathbb{Z}_2)$ with $\partial T_1= \partial T_2= 0$, and 
	$$\mathcal{F}(T_1-T_2)<\epsilon_{M}, $$there is a unique $Q\in {\bf I}_{n+1}^G( M;\mathbb{Z}_2)$, called {\em the isoperimetric choice of $T_1,T_2$}, satisfying
	\begin{itemize}
    	\item[(i)] $\partial Q = T_1-T_2$,
    	\item[(ii)] ${\bf M}(Q)\leq C_M\cdot\mathcal{F}(T_1-T_2)$.
  	\end{itemize}
\end{lemma}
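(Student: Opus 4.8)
\textbf{Proof proposal for Lemma \ref{Lem: isoperimetric}.}

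The plan is to reduce the $G$-invariant isoperimetric statement to the classical non-equivariant isoperimetric lemma (as in \cite{pitts2014existence} or \cite{almgren1962homotopy}) by an averaging argument over the group $G$. First, since $T_1, T_2 \in {\bf I}_n^G(M;\mZ_2)$ are in particular ordinary flat cycles in ${\bf I}_n(M;\mZ_2)$ (mod-$2$ flat chains with no boundary), the classical isoperimetric lemma gives, for $\F(T_1 - T_2)$ smaller than a dimensional/geometric constant $\epsilon_0$ depending only on $M$, a chain $Q_0 \in {\bf I}_{n+1}(M;\mZ_2)$ with $\partial Q_0 = T_1 - T_2$ and ${\bf M}(Q_0) \leq C_0 \cdot \F(T_1 - T_2)$, and moreover $Q_0$ is \emph{unique} among fillings of small mass (this uniqueness is the standard consequence of the constancy theorem: two such fillings differ by a cycle in ${\bf I}_{n+1}$ of small mass, which must be $0$ since $M^{n+1}$ has no $(n+1)$-cycles of positive dimension below the fundamental class — more precisely, a small-mass $(n+1)$-cycle on the $(n+1)$-manifold $M$ with boundary zero is forced to vanish). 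I would isolate this uniqueness as the crucial input, since it is what upgrades an averaging construction into a genuinely $G$-invariant object.

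Next I would exploit uniqueness to get $G$-invariance for free. For each $g \in G$, the isometry $g$ pushes forward $Q_0$ to $g_\# Q_0 \in {\bf I}_{n+1}(M;\mZ_2)$ with $\partial(g_\# Q_0) = g_\#(\partial Q_0) = g_\#(T_1 - T_2) = T_1 - T_2$ (using $g_\# T_i = T_i$), and ${\bf M}(g_\# Q_0) = {\bf M}(Q_0)$ because $g$ is an isometry. Hence $g_\# Q_0$ is also a small-mass filling of $T_1 - T_2$; by the uniqueness clause it must equal $Q_0$. Therefore $Q := Q_0$ is already $G$-invariant, lies in ${\bf I}_{n+1}^G(M;\mZ_2)$, and automatically satisfies (i) and (ii) with $C_M := C_0$, $\epsilon_M := \epsilon_0$. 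One must check that the constants $\epsilon_0, C_0$ from the classical statement genuinely depend only on $(M, g_{_M})$ (and, in the $\partial M \neq \emptyset$ case, on the closed extension $N$ and the choice of a collar), which is standard. Uniqueness \emph{within} ${\bf I}_{n+1}^G(M;\mZ_2)$ then follows a fortiori from uniqueness in the larger space ${\bf I}_{n+1}(M;\mZ_2)$.

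The main obstacle — and the only genuinely delicate point — is establishing the uniqueness of the small-mass filling and making sure the relevant constants are intrinsic to $M$; everything else is a short symmetry argument. When $\partial M \neq \emptyset$ one must be slightly careful: the filling should be taken with support in $M$ (not just in the ambient closed extension $N$), so I would either apply the classical lemma on $N$ and then note that, for $\F(T_1-T_2)$ small, the unique small-mass filling is automatically supported near $\spt(T_1) \cup \spt(T_2) \subset M$ and hence in $M$, or equivalently quote the relative version of the isoperimetric inequality directly on the compact manifold-with-boundary $M$. I would also remark that the hypothesis $\partial T_1 = \partial T_2 = 0$ is exactly what is needed for $T_1 - T_2$ to be a cycle, so that a filling exists at all; this is consistent with the usage of the lemma elsewhere in the paper, where it is applied to $G$-invariant $n$-cycles. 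This completes the plan.
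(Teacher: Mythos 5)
Your proposal is correct and is essentially the paper's intended argument (cf.\ \cite[Lemma 5]{wang2022min}): apply the classical Almgren--Pitts isoperimetric lemma to the cycle $T_1-T_2$, note that uniqueness of the small-mass filling follows from the Constancy Theorem, and then conclude $g_\#Q_0=Q_0$ for every $g\in G$ because $g$ is an isometry fixing the boundary $T_1-T_2$, so the filling is automatically $G$-invariant. The only caveat is in the $\partial M\neq\emptyset$ case: small mass alone does not force the filling in $N$ to be supported in $M$, so of your two suggested routes you should use the second one, i.e.\ invoke the relative/retraction version of the isoperimetric lemma directly on the compact manifold-with-boundary $M$ (where uniqueness is even easier, since the only $(n+1)$-cycle supported in $M$ is $0$).
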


\medskip
For any $V\in\V_n(M)$ and $X\in \mathfrak{X}(M)$, the first variation of $V$ along $X$ is given by 
$$ \delta V(X) := \frac{d}{dt}\Big|_{t=0} \|(F_t)_\# V\|(M) = \int_{G_n(M)} {\rm div}_S(X)(p) dV(p,S)  ,$$
where $\{F_t\}$ are the diffeomorphisms generated by $X$, and $G_n(M)$ is the Grassmannian bundle of un-oriented $n$-planes over $M$. 
Suppose $V\in \V_n^G(M)$ is $G$-invariant and $U\subset M$ is an open $G$-subset, then we say 
\begin{itemize}
	\item $V$ is {\em stationary} in $U$, if $\delta V(X) =0  $ for all $X\in \mathfrak{X}(U)$;
	\item $V$ is {\em $G$-stationary} in $U$, if $\delta V(X) =0  $ for all $X\in \mathfrak{X}^G(U)$. 
\end{itemize}
Clearly, a stationary $G$-varifold must be $G$-stationary. 
Meanwhile, for any $X\in \mathfrak{X}(U)$, let 
\begin{equation}\label{Eq: average over G}
	X_G := \int_G (g^{-1})_*X d\mu(g). 
\end{equation} 
A direct compute shows $X_G\in \mathfrak{X}^G(U)$ and $\delta V(X) = \delta V(X_G)$ for any $V\in \V_n^G(M)$ (c.f. \cite[Lemma 2.2]{liu2021existence}). 
Hence, we have: 
\begin{equation}\label{Eq: $G$-stationary}
	\mbox{$V\in \V_n^G(M)$ is stationary in $U$ if and only if it is $G$-stationary in $U$.} 
\end{equation}

\subsection{$G$-Sweepouts and $G$-width}\label{Subsec: 2.3 sweepout and width}
To define the equivariant sweepouts and width, we need to introduce a technical assumption: 

\begin{definition}\label{Def: no mass concentration}
	For any $\F$-continuous map $\Phi: [0,1]\to \Z_n^G(M;\mZ_2)$, define 
	$$ {\bf m}^G(\Phi, r) := \sup \{ \|\Phi(x)\|(B^G_r(p)) : x\in [0,1], p\in M \},$$
	where $B^G_r(p)$ is the geodesic $r$-neighborhood of $G\cdot p$ in $M$ (or in $N$ if $M\subset N$ has non-empty boundary).  
	Then we say $\Phi $ has {\em no concentration of mass on orbits} if $\lim_{r\to 0} {\bf m}^G(\Phi, r) = 0$. 
\end{definition}

By (\ref{Eq: F-metric}) and a continuous argument, we have the following lemma (c.f. \cite[Lemma 8]{wang2022min}), which is quite useful in Section \ref{Sec: Equivariant min-max}. 
\begin{lemma}\label{Lem: no mass concentration}
	If $\Phi: [0,1]\to \Z_n^G(M;\mZ_2)$ is $\mF$-continuous, then $\Phi$ has no concentration of mass on orbits and $\sup_{x\in [0,1]}\M(\Phi(x)) < \infty$. 
\end{lemma}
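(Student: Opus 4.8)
The plan is to prove the two assertions separately, each by combining compactness of $[0,1]$ with the defining property (\ref{Eq: F-metric}) of the $\mF$-metric.

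\emph{Boundedness of mass.} First I would note that $\mF$-continuity of $\Phi$ makes the real-valued function $x\mapsto\M(\Phi(x))$ continuous on $[0,1]$: if $x_i\to x$ in $[0,1]$ then $\mF(\Phi(x_i),\Phi(x))\to 0$, and by (\ref{Eq: F-metric}), applied to the sequence $\{\Phi(x_i)\}\subset\Z_n(M;\mZ_2)$, this already entails $\M(\Phi(x_i))\to\M(\Phi(x))$. A continuous function on the compact interval $[0,1]$ is bounded, giving $\sup_{x\in[0,1]}\M(\Phi(x))<\infty$.

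\emph{No concentration of mass.} Since $r\mapsto{\bf m}^G(\Phi,r)$ is monotone non-decreasing, $\lim_{r\to 0}{\bf m}^G(\Phi,r)$ exists, and I would argue by contradiction that it equals $0$. If not, there are $\delta>0$ and $r_i\downarrow 0$ with ${\bf m}^G(\Phi,r_i)\ge\delta$, hence $x_i\in[0,1]$ and $p_i\in M$ with $\|\Phi(x_i)\|(B^G_{r_i}(p_i))\ge\delta$. Passing to subsequences, $x_i\to x_\infty\in[0,1]$ and $p_i\to p_\infty\in M$. By $\mF$-continuity, $\Phi(x_i)\to\Phi(x_\infty)$ in the $\mF$-metric, so ${\bf F}(|\Phi(x_i)|,|\Phi(x_\infty)|)\to 0$; together with the uniform mass bound from the first part, this yields $\|\Phi(x_i)\|\rightharpoonup\|\Phi(x_\infty)\|$ weakly-$*$ as Radon measures on $M$. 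Because $G$ acts by isometries, $d_{\mathrm{Hausdorff}}(G\cdot p_i,G\cdot p_\infty)\le\dist_M(p_i,p_\infty)$, so for each fixed $r>0$ one has $B^G_{r_i}(p_i)\subset\overline{B^G_r(p_\infty)}$ as soon as $r_i+\dist_M(p_i,p_\infty)<r$, i.e. for all large $i$. The set $\overline{B^G_r(p_\infty)}$ is compact, so upper semicontinuity of mass on compact sets under weak-$*$ convergence gives $\|\Phi(x_\infty)\|(\overline{B^G_r(p_\infty)})\ge\limsup_i\|\Phi(x_i)\|(\overline{B^G_r(p_\infty)})\ge\delta$. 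Letting $r\downarrow 0$ and using $\bigcap_{r>0}\overline{B^G_r(p_\infty)}=G\cdot p_\infty$ together with continuity from above of the finite measure $\|\Phi(x_\infty)\|$, I obtain $\|\Phi(x_\infty)\|(G\cdot p_\infty)\ge\delta>0$.

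To reach a contradiction I would invoke that $\Phi(x_\infty)\in\Z_n^G(M;\mZ_2)\subset{\bf I}_n(M;\mZ_2)$, so $|\Phi(x_\infty)|$ is an integral, in particular $n$-rectifiable, varifold, whence $\|\Phi(x_\infty)\|$ vanishes on every Borel set of $\mathcal{H}^n$-measure zero; but $G\cdot p_\infty$ is a closed submanifold of dimension $\le n-2$ since ${\rm codim}(G\cdot p)\ge 3$, so $\mathcal{H}^n(G\cdot p_\infty)=0$ — contradiction. The step I expect to require the most care is the one in the third paragraph: correctly trapping the shrinking open tubes $B^G_{r_i}(p_i)$, with moving centers, inside a single closed tube $\overline{B^G_r(p_\infty)}$ around the limit orbit, and legitimately passing from $\mF$-convergence of the cycles to weak-$*$ convergence of the weight measures so that the Portmanteau upper-semicontinuity applies. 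The final appeal to $n$-rectifiability — hence to the codimension hypothesis on the orbits — is essential and is precisely where the assumption on $G$ enters.
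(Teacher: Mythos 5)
Your proof is correct and follows essentially the route the paper intends: the paper only cites (\ref{Eq: F-metric}) together with a continuity/compactness argument (c.f. \cite[Lemma 8]{wang2022min}), which is exactly what you carry out — continuity of $x\mapsto\M(\Phi(x))$ from the $\mF$-metric, then a contradiction/compactness argument trapping the shrinking tubes in a fixed closed tube, upper semicontinuity under varifold (weak-$*$) convergence, and the fact that orbits are $\mathcal{H}^n$-null (codimension $\geq 3$, the paper's standing assumption) so the integral varifold $|\Phi(x_\infty)|$ gives them zero measure. The only cosmetic point is that the supremum defining ${\bf m}^G(\Phi,r_i)\geq\delta$ need not be attained, so one should choose $x_i,p_i$ with $\|\Phi(x_i)\|(B^G_{r_i}(p_i))\geq\delta/2$, which changes nothing in the argument.
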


\medskip
\subsubsection{Closed manifolds}~

In this case, $\partial M = \emptyset$. 
Then for any $\F$-continuous closed curve $\Phi: [0,1]\to \Z_n^G(M;\mZ_2)$, $\Phi(0)=\Phi(1)$, we can take $a_j = \frac{j}{3^k}$, $j=0,1,\dots, 3^k$ with $k\in\N$ large enough so that 
\begin{equation}\label{Eq: divide a curve}
	\F(\Phi(x) - \Phi(y))\leq \epsilon_{M}, \quad \forall x, y\in [a_{j}, a_{j+1}],
\end{equation}
where $\epsilon_M>0$ is given by Lemma \ref{Lem: isoperimetric}. 
By Lemma \ref{Lem: isoperimetric}, there is $Q_j \in \mI_{n+1}^G(M;\mZ_2)$ with $\partial Q_j = \Phi(a_{j+1}) - \Phi(a_j)$ and ${\bf M}(Q_j)\leq C_M \mathcal{F}(\Phi(a_{j+1}) - \Phi(a_j))$, $j=0,1,\dots, 3^k-1$. 
Therefore, $Q:=\sum_{j=0}^{3^k-1} Q_j\in \mI_{n+1}^G(M;\mZ_2)$ satisfies $\partial Q = 0$, which indicates $Q=[[M]]$ or $0$ by the Constancy Theorem \cite[26.27]{simon1983lectures}. 
Hence, we can correspond $\Phi$ to a homology class:
\begin{equation}\label{Eq: isomorphism map}
	F_M(\Phi) := \left[ Q \right] \in H_{n+1}(M^{n+1}; \mZ_2). 
\end{equation}
By the constancy theorem, $F_M(\Phi)$ does not depend on the choice of $k$. 
Moreover, by \cite[Remark 2]{wang2022min} and the arguments in \cite{almgren1962homotopy}, we have $F_M(\Phi)=F_M(\Phi')$ for any closed curve $\Phi'$ that is homotopic to $\Phi$ in $\Z_n^G(M;\F;\mZ_2)$, and $F_M$ induces an isomorphism (\cite[Theorem 9]{wang2022min}): 
$$ F_M : \pi_1(\Z_n^G(M;\mZ_2)) \to H_{n+1}(M;\mZ_2) . $$
In the above isomorphism, we do not need to specify the base point of $\pi_1(\Z_n^G(M;\mZ_2))$. 
This is because $\Z_n^G(M;\mZ_2)$ is the $\F$-path connected component of $\mI_n^G(M;\mZ_2)\cap \Z_n(M;\mZ_2)$ containing $0$ (by Lemma \ref{Lem: isoperimetric} and the contraction argument in \cite[Claim 5.3]{marques2021morse}). 

\begin{definition}[$G$-sweepout]\label{Def: sweepout in closed manifolds}
	A closed $\F$-continuous curve $\Phi: S^1\to \Z_n^G(M;\mZ_2)$ is said to be a {\em $G$-sweepout of $M$} if $F_M(\Phi)=[M]\neq 0$. 
\end{definition}

\begin{remark}\label{Rem: sweepouts are homotopic}
	Since $\Z_n^G(M;\mZ_2)$ is $\F$-path connected, every two $G$-sweepouts are homotopic to each other in $\Z_n^G(M;\F;\mZ_2)$. 
	Hence, the set of $G$-sweepouts of $M$ is exactly the non-trivial homotopy class of closed curves in $\Z_n^G(M;\mZ_2) $. 
\end{remark}

Next, we introduce the min-max $G$-width of $M$ which can be regarded as a critical value for the area functional with respect to {\em all} variations by (\ref{Eq: $G$-stationary}). 

\begin{definition}[$G$-width]\label{Def: width of M}
	Let $\mathcal{P}^G(M)$ be the set of $G$-sweepouts of $M$ with no concentration of mass on orbits. 
	Then we define the {\em $G$-width} of $M$ by 
	$$ W^G(M) := \inf_{\Phi\in \mathcal{P}^G(M)} \sup_{x\in S^1} \M(\Phi(x)) .$$
\end{definition}

\medskip
\subsubsection{Compact manifolds with boundary}~

Now we consider the case that $\partial M \neq \emptyset$, and regard $M$ as a compact domain of a closed Riemannian $G$-manifold $N$. 
Let $F_N$ be given by (\ref{Eq: isomorphism map}), and $\nu_{\partial M}$ be the unit normal of $\partial M$ pointing inward $M$. Then for $\eta>0$ small enough, define 
\begin{equation}\label{Eq: M_eta}
	M_\eta := M \setminus \exp_{\partial M}^\perp([0,\eta)\cdot \nu_{\partial M}) = \{ p\in M: \dist_M(p,\partial M)\geq \eta \}. 
\end{equation}

Let $\Phi_{i}: [0,1]\to \Z_n^G(M;\mZ_2)$, $i=1,2$, be two $\F$-continuous curve so that $\Phi_i(0)= [[\partial M]]$ and $\Phi_i(1) = 0$. 
As the constructions in (\ref{Eq: divide a curve}), we can associate $\Phi_i$ to $Q_i\in \mI_{n+1}^G(M;\mZ_2)$ with $\partial Q_i = [[\partial M]]$. 
Then the Constancy Theorem implies $Q_i=[[M]]$. 
Therefore, the curves product (joint curve) $\Phi_2^{-1}\cdot \Phi_1$  satisfies $F_N(\Phi_2^{-1}\cdot \Phi_1) = 0$, and thus $\Phi_2^{-1}\cdot \Phi_1$ is homotopic to $0$ in $\Z_n^G(N;\F;\mZ_2)$. 
Since $\spt(\Phi_i(x))\subset M$ for all $x\in [0,1]$ and $i=1,2$, we can apply the double cover argument in \cite[Theorem 5.1]{marques2021morse} with Lemma \ref{Lem: isoperimetric} in place of \cite[Corollary 1.14]{almgren1962homotopy}, and see the homotopy map between $\Phi_2^{-1}\cdot \Phi_1$ and $0$ can be taken in $\Z_n^G(M;\F;\mZ_2)$. 
Thus $\Phi_1$ and $\Phi_2$ are homotopic to each other in $\Z_n^G(M;\F;\mZ_2)$. 

Next, we introduce the following definition for $G$-manifold with non-empty boundary, which is generalized from the smooth min-max setting \cite[Definition 2.1, 2.5]{zhou2015min}. 

\begin{definition}\label{Def: sweepout and width in M with boundary}
	Suppose $M$ is a compact Riemannian $G$-manifold with boundary $\partial M\neq \emptyset$. 
	Then we call a $\F$-continuous curve $\Phi: [0,1]\to \Z_n^G(M;\mZ_2)$ a {\em $G$-sweepout of $(M,\partial M)$}, if 
	\begin{itemize}
		\item[(i)] $\Phi(0) = [[\partial M]]$, $\Phi(1) = 0$;
		\item[(ii)] there exist $\epsilon>0$ and a smooth $G$-invariant function $w:[0,\epsilon]\times \partial M\to [0,\infty)$ with $w(0,\cdot )\equiv 0$ and $\frac{\partial}{\partial x}w(0,\cdot) >0$, so that $\Phi(x)$, $x\in [0,\epsilon]$, is induced by the smooth $G$-hypersurface $\exp_{\partial M}^\perp(w(x,\cdot) \nu_{\partial M} )$; 
		\item[(iii)] for any $x_0\in (0,1]$, there exists $\eta >0$ so that $\spt(\Phi(x))\subset\subset M_\eta$ for all $x\in [x_0,1]$. 
	\end{itemize}
	Denote by $\mathcal{P}^G(M,\partial M)$ the set of $G$-sweepouts of $(M,\partial M)$ with no concentration of mass on orbits. 
	Then we define the {\em $G$-width of $(M,\partial M)$} by 
	$$ W^G(M,\partial M) := \inf_{\Phi\in \mathcal{P}^G(M,\partial M)} \sup_{x\in [0,1]} \M(\Phi(x)). $$
\end{definition}

\begin{remark}\label{Rem: sweepouts are homotopic in M with boundary}
	As we mention before, any two $G$-sweepouts $\Phi_1, \Phi_2$ of $(M,\partial M)$ must homotopic to each other in $\Z_n^G(M;\F;\mZ_2)$. 
	Moreover, by reparametrization, the foliation parts of $\Phi_i$, $i=1,2$, are homotopic through $v_t:=(1-t)w_1+t w_2 $, where $t\in [0,1]$ and $w_1,w_2: [0,\frac{1}{3} ]\times \partial M \to [0,\infty)$ are given by Definition \ref{Def: sweepout and width in M with boundary}. 
	The non-foliation parts $\Phi_i\llcorner [\frac{1}{3}, 1 ]$ and $\exp_{\partial M}^\perp(v_t(\frac{1}{3},\cdot )\nu_{\partial M})$ are all in $M_\eta$ for some $\eta>0$, and thus the homotopy between these parts can be taken in $\Z_n^G(M_\eta; \F;\mZ_2)$ (c.f. the constructions in \cite[Theorem 5.1]{marques2021morse} with Lemma \ref{Lem: isoperimetric}). 
	Therefore, we can take a homotopy map $H:[0,1]\times [0,1]\to \Z_n^G(M;\F;\mZ_2)$ so that $H(0,\cdot ) = \Phi_1$, $H(1,\cdot)=\Phi_2$, and for every $t\in [0,1]$, $H(t,\cdot )$ is a $G$-sweepout of $(M,\partial M)$. 
\end{remark}


\section{Equivariant min-max theory}\label{Sec: Equivariant min-max}

In this section, we introduce the equivariant min-max constructions in \cite{wang2022min} (see \cite{wang2022free}\cite{wang2022equivariant} for modified versions). 
Then main purpose is to find an integral $G$-varifold $V\in\V_n^G(M)$ induced by a smooth embedded minimal $G$-hypersurface so that $\|V\|(M)= W^G(M)$ (or $W^G(M,\partial M)$ if $\partial M \neq\emptyset$). 
Since our definitions differ slightly from those in \cite{wang2022min}\cite{wang2022free}, we shall outline the essential steps for the sake of completeness.

Throughout this section, let $\mathcal{P}^G= \mathcal{P}^G(M)$ or $\mathcal{P}^G(M,\partial M)$, $W^G = W^G(M)$ or $W^G(M,\partial M)$ depending on whether $\partial M$ is empty. 
By reparametrization, we always assume the domain of $\Phi \in \mathcal{P}^G$ is $I = [0,1]$, and if $\partial M \neq \emptyset$, then $\Phi\llcorner [0, 1/3]$ are smooth $G$-hypersurfaces as in Definition \ref{Def: sweepout and width in M with boundary}(ii). 

For any sequence $\{\Phi_i\}_{i\in\N}\subset \mathcal{P}^G$, define the {\em width} of $\{\Phi_i \}_{i\in\N}$ by
$$ {\bf L}(\{\Phi_i\}_{i\in\N}) : = \limsup_{i\to\infty} \sup_{x\in I} \M(\Phi_i(x)),$$
Then we say $\{\Phi_i\}_{i\in\N}$ is a {\em min-max} sequence if 
$${\bf L}(\{\Phi_i\}_{i\in\N}) = W^G. $$
The  {\em image set} of $\{\Phi_i\}_{i\in\N}$ is defined by 
$$\mathbf{\Lambda} (\{\Phi_i \}_{i\in\N} ) := \{V \in \mathcal{V}_{n}^G(M): V=\lim_{j\to\infty} |\Phi_{i_{j}}(x_{i_{j}})| \mbox{~for some $i_j\to\infty,x_{i_j}\in I$}\} .$$ 
Moreover, we define the {\em critical set} of $\{\Phi_i \}_{i\in\N}$ by 
$$\mathbf{C}(\{\Phi_i \}_{i\in\N}) := \{V\in\mathbf{\Lambda} (\{\Phi_i \}_{i\in\N} ) : ||V||(M) = {\bf L}(\{\Phi_i\}_{i\in\N}) \}  .$$

\subsection{Discrete min-max settings}\label{Subsec: discrete settings}

To apply the equivariant min-max constructions in \cite{wang2022min}\cite{wang2022free}, we need the following discrete notations. 
Since we only consider curves in $\Z_n^G(M;\mZ_2)$, we will restrict the notations to the $1$-parameter case. 

Denote by $I:=[0,1]$. 
For any $j\in \mathbb{N}$, let $I(1,j)$ be the cube complex on $I$ with $1$-cells and $0$-cells (vertices) given by: 
$$I(1,j)_1:=\{[0,3^{-j}], [3^{-j},2 \cdot 3^{-j}],\dots,[1-3^{-j}, 1]\}, \qquad I(1,j)_0 :=\{ [0], [3^{-j}],\dots, [1] \}.$$
The boundary homeomorphism $\partial$ is defined by $\partial [a,b] = [b] - [a]$. 
Then we denote by $I(2,j) = I(1,j)\otimes I(1,j)$ the cell complex on $I^2=I\times I$. 
For any $\alpha = \alpha_1\otimes\alpha_2 \in I(2,j)$, we say $\alpha$ is a {\em $p$-cell}, if $\dim(\alpha_1 )+\dim(\alpha_2) = p$.
Then the set of $p$-cells of $I(2,j)$ is denoted by $I(2,j)_p$, and the set of $p$-cells in $\alpha\in I(i,j)_q$ is denoted by $\alpha_p$.

Let $J:=[1/3, 1]$. 
Then we denote by $J(1,j)$ the cubical subcomplex containing all the cells of $I(1,j)$ supported in $J$.
Similarly, the set of $p$-cells of $J(1,j)$ is denoted by $J(1,j)_p$ .

Given any two vertices $x, y\in I(m,j)_0$, define the distance ${\bf d}(x,y) := 3^j\sum_{i=1}^m |x_i - y_i|$. 
For any map $\phi : I(1,j)_0 \rightarrow  \mathcal{Z}_n^G(M;\mathbb{Z}_2)$, we define the {\em $\M$-fineness} of $\phi$ by
$${\bf f}_{\M}(\phi) := \sup\left\{{\bf M}(\phi(x)-\phi(y)) :  {\bf d}(x,y)=1,~ x,y\in  I(1,j)_0\right\}.$$
Suppose $S=\{\varphi \}_{i\in\N}$ is a sequence of maps $\varphi_i : I(1,k_i)_0\to\Z_n^G(M;\mZ_2)$ such that $k_i\to\infty$ and ${\bf f}_\M(\varphi_i)\to 0$ as $i\to\infty$. 
Then we use the following notations: 
\begin{itemize}
	\item ${\bf L}(S) : = \limsup_{i\to\infty} \max_{x\in I(1,k_i)_0} \M(\varphi_i(x))$;
	\item $\mathbf{\Lambda} (S ) := \{V \in \mathcal{V}_{n}^G(M): V=\lim_{j\to\infty} |\varphi_{i_{j}}(x_{i_{j}})| \mbox{~for some $i_j\to\infty,x_{i_j}\in I(1,k_i)_0$}\} $;
	\item $\mathbf{C}(S) := \{V\in\mathbf{\Lambda} (S ) : ||V||(M) = {\bf L}(S) \}  .$
\end{itemize}

For any $i,j\in \mathbb{N}$, let ${\bf n}(i,j): I(1,i)_0\rightarrow I(1,j)_0$ be the nearest projection, i.e. $${\bf d}(x,{\bf n}(i,j)(x)) = \inf\{{\bf d}(x,y) : y\in I(m,j)_0\}.$$
Then we define the discrete homotopy: 

\begin{definition}\label{Def: homotpy for maps}
	Given $\phi_i : I(1,k_i)_0 \rightarrow  \Z_{n}^G(M;\mZ_2)$, $i=1,2$, we say $\phi_1$ and $\phi_2$ are {\it $1$-homotopic in $\Z_{n}^G(M;\mZ_2)$ with $\M$-fineness $\delta$} if there exists a map 
	$$\psi: I(1,k)_0\times I(1,k)_0 \rightarrow  \Z_{n}^G(M;\mZ_2)$$
	for some $k\geq\max\{k_1,k_2\}$ such that ${\bf f}_\M (\psi)<\delta$ and $\psi([i-1],x)=\phi_i( {\bf n}(k,k_i)(x))$, for $i\in\{1,2\}$ and $x\in I(1,k)_0$. 
\end{definition}

\begin{definition}\label{Def: homotopy sequence}
	A sequence of mappings $S=\{\phi_i\}_{i\in\N}$,
	$\phi_i: I(1,k_i)_0\rightarrow \Z_{n}^G(M;\mZ_2)$, is a
 	$$\mbox{{\em $(1,{\bf M})$-homotopy sequence of mappings into $\Z_{n}^G(M;\mZ_2)$}},$$ 
	if $\phi_i$ and $\phi_{i+1}$ are $1$-homotopic in $ \Z_{n}^G(M ;\mZ_2)$ with $\M$-fineness $\delta_i$ such that
	\begin{itemize}
		\item[(i)] $\lim_{i\rightarrow\infty} \delta_i=0$;
		\item[(ii)]$\sup\{{\bf M}(\phi_i(x)) :  x\in I(1,k_i)_0, ~i\in \mathbb{N}\}<+\infty.$
	\end{itemize}
\end{definition}

\begin{definition}\label{Def: homotopy for sequence}
	Let $S^j=\{\phi^j_i\}_{i\in\N}$, $j=1,2$, be two $(1,{\bf M})$-homotopy sequences of mappings into $ \Z_{n}^G(M ;\mZ_2) $. 
	Then $S^1$ and $S^2$ are {\em homotopic in $ \Z_{n}^G(M ;\mZ_2) $} if there exists a sequence $\{\delta_i\}_{i\in \mathbb{N}}$ such that
 	\begin{itemize}
		\item[(i)] $\phi^1_i$  is $1$-homotopic to $\phi^2_i$ in $\Z_{n}^G(M;\mZ_2)$ with $\M$-fineness $\delta_i$;
		\item[(ii)] $\lim_{i\rightarrow\infty} \delta_i=0.$
 	\end{itemize}
\end{definition}

By the following discretization theorem from \cite[Theorem 2]{wang2022min}, we can generate a $(1,\M)$-homotopy sequence of mappings into $\Z_{n}^G(M;\mZ_2)$ from any $\Phi\in\mathcal{P}^G$. 

\begin{theorem}[Discretization Theorem]\label{Thm: discretization}
	Let $\Phi: I \rightarrow \mathcal{Z}_n^G(M;\mathbb{Z}_2)$ be a continuous map in the flat topology so that $\sup_{x\in I}\M (\Phi(x))<\infty$ and $\Phi$ has no concentration of mass on orbits. 
	Then there exists a sequence of maps
	$$\phi_i:I(1,j_i)_0 \rightarrow \mathcal{Z}_n^G(M;\mathbb{Z}_2),$$
	with $j_i<j_{i+1}$, and a sequence of positive numbers $\{\delta_i\}_{i\in\mathbb{N}}$ converging to zero such that
	\begin{itemize}
		\item[(i)] $S=\{\phi_i\}_{i\in\mathbb{N}}$ is a $(1,{\bf M})$-homotopy sequence of mappings into $\mathcal{Z}_n^G(M;\mathbb{Z}_2)$ with $\M$-fineness ${\bf f}_\M(\phi_i)<\delta_i$;
		\item[(ii)] there exists some sequence $k_i \to +\infty$ such that for all $x\in I(1, j_i)_0$,
		$$ \M (\phi_i(x)) \leq \sup \{ \M (\Phi(y)) : \al\in I(1, k_i)_1, x, y \in\al\}+\de_i,$$
		which implies $\bL(S)\leq\sup_{x\in I}\M(\Phi(x))$; 
		\item[(iii)] $\sup\{\mathcal F(\phi_i(x)-\Phi(x)) : x\in I(1,j_i)_0\}\leq \delta_i;$
		\item[(iv)] $\Phi(0) =\phi_i([0]) = \psi_i(\cdot, [0])$, $\Phi(1)= \phi_i([1]) = \psi(\cdot ,[1])$, where $\psi_i $ is the discrete homotopy map of $\phi_i$ and $\phi_{i+1}$ given by (i) with $\psi_i([0],{\bf n}(\cdot) )=\phi_i$, $\psi_i([1],{\bf n}(\cdot) )=\phi_{i+1}$.
	\end{itemize}
	Moreover, let $K\subset M$ be a compact $G$-invariant domain with smooth boundary. 
	Then for any $j\in\N$ and $\alpha\in  I(1,j)_1$, if $\spt(\Phi(x))\subset K$ for all $x\in \alpha$, then we can further make $\spt(\phi_i(x))\subset K$ for all $x\in\alpha\cap I(1,j_i)_0$. 
\end{theorem}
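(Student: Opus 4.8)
The plan is to follow the Almgren--Pitts discretization scheme in its $G$-equivariant refinement from \cite[Proof of Theorem 2]{wang2022min}, carrying the $G$-invariance through every step and tracking the support constraint by choosing the auxiliary complexes compatible with $\partial K$. Since $\Phi$ is continuous in the flat topology on the compact interval $I$, it is uniformly continuous there, so for every $\rho>0$ there is $k\in\N$ with $\F(\Phi(x)-\Phi(y))<\rho$ whenever $x,y$ lie in a common cell of $I(1,k)_1$. If $\partial M\neq\emptyset$ we regard $M$ as a compact domain of the closed $G$-manifold $N$ and run the entire construction in $N$; the final support clause will then confine the approximating cycles back to $M$.

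\textbf{Equivariant discretization of a single cycle.} Fix a sequence of $G$-invariant cubical decompositions $\{\mathcal{C}_\ell\}_{\ell\in\N}$ of $N$, adapted to the orbit-type stratification and to $\partial M$ and $\partial K$, with mesh tending to $0$. For a cycle $T\in\Z_n^G(M;\mZ_2)$ I would define a discretization $\mathbf{a}_\ell(T)\in\Z_n^G(M;\mZ_2)$, supported in a mesh-neighborhood of the $n$-skeleton of $\mathcal{C}_\ell$, by the usual cell-by-cell pushing of $T$ onto the skeleton, filling across each top-dimensional cell by the (unique, hence $G$-equivariant) isoperimetric choice of Lemma \ref{Lem: isoperimetric} and using the averaging (\ref{Eq: average over G}) at the places where a non-canonical choice would otherwise break invariance, so that $\mathbf{a}_\ell$ maps $\Z_n^G$ into $\Z_n^G$ and $\partial\mathbf{a}_\ell(T)=0$. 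The two estimates this construction must deliver are: (a) a mass bound $\M(\mathbf{a}_\ell(T))\le \M(T)+C\,\Theta_\ell(T)$, where $\Theta_\ell(T)$ is the mass of $T$ in a mesh-sized neighborhood of the codimension-two skeleton of $\mathcal{C}_\ell$; and (b) a flat-to-mass bound $\M(\mathbf{a}_\ell(T_1)-\mathbf{a}_\ell(T_2))\le C\big(\F(T_1-T_2)+\Theta_\ell(T_1)+\Theta_\ell(T_2)\big)$ for all $T_1,T_2$, converting flat-closeness of two cycles into mass-closeness of their discretizations once the mesh is fine relative to the concentration of $T_1,T_2$. Both follow from Lemma \ref{Lem: isoperimetric} and the coarea/slicing formula; because the lower skeleta of $\mathcal{C}_\ell$ are built to be orbit-like, $\Theta_\ell(T)$ is controlled by the orbit-concentration ${\bf m}^G(\cdot,O(\mathrm{mesh}(\mathcal{C}_\ell)))$, which is exactly why the hypothesis is imposed on the tubes $B^G_r(p)$ rather than on ordinary balls.

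\textbf{Diagonalization and assembly.} For each $i$, first pick $\ell_i$ so large that, using that $\Phi$ has no concentration of mass on orbits (applied as in Lemma \ref{Lem: no mass concentration}), $\sup_{x\in I}{\bf m}^G(\Phi(x),O(\mathrm{mesh}(\mathcal{C}_{\ell_i})))<\epsilon_i$ for a preselected sequence $\epsilon_i\downarrow0$; then pick $j_i$ so large that $\F(\Phi(x)-\Phi(y))<\epsilon_i$ whenever $x,y$ lie in a common cell of $I(1,j_i)_1$, and set $\phi_i(x):=\mathbf{a}_{\ell_i}(\Phi(x))$ for $x\in I(1,j_i)_0$. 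Then (iii) holds since $\F(\phi_i(x)-\Phi(x))\le C\Theta_{\ell_i}(\Phi(x))\le C\epsilon_i=:\delta_i$; (ii) holds by estimate (a) together with uniform continuity, with a suitable auxiliary scale $k_i\to\infty$; and ${\bf f}_\M(\phi_i)\le C\epsilon_i\to0$ by estimate (b) applied to adjacent grid points, giving (i). The $1$-homotopy between $\phi_i$ and $\phi_{i+1}$ of small $\M$-fineness is produced by the two-parameter discrete map on $I(1,k)_0\times I(1,k)_0$ that restricts $\Phi$ along the first factor and applies the discretizations $\mathbf{a}_\ell$ at meshes interpolating between $\ell_i$ and $\ell_{i+1}$ along the second factor; the same two estimates bound its fineness, and nothing moves along $\{[0]\}$ and $\{[1]\}$ provided $\spt(\Phi(0)),\spt(\Phi(1))$ lie in the skeleton (true in our applications, where $\Phi(0),\Phi(1)\in\{0,[[\partial M]]\}$ and $\partial M$ is a subcomplex), which yields (iv). Finally, if $\spt(\Phi(x))\subset K$ on a cell $\alpha$, then since $\partial K$ is a subcomplex of each $\mathcal{C}_\ell$ the discretization does not carry support out of $K$, so $\spt(\phi_i(x))\subset K$ on $\alpha\cap I(1,j_i)_0$.

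\textbf{Main obstacle.} The technical heart is the construction of $\mathbf{a}_\ell$ together with the simultaneous estimates (a), (b), performed $G$-equivariantly: the cell-by-cell isoperimetric fillings, the gluing of faces, and the averaging over $G$ must be arranged so that taking boundaries, summing faces, and $\int_G(\cdot)\,d\mu(g)$ all commute — otherwise $\mathbf{a}_\ell(T)$ need not be a $G$-invariant cycle — while the error $\Theta_\ell$ must be genuinely controlled, uniformly in $x$, by the orbit-concentration. Making the auxiliary complexes $\mathcal{C}_\ell$ respect the orbit-type stratification (so their lower skeleta are covered by boundedly many orbit-tubes of comparable radius) is what makes the "no concentration on orbits" hypothesis the correct one and is precisely the point where the equivariant refinement of Pitts' argument from \cite{wang2022min} is needed.
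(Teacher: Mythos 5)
Your overall framework (equivariant Almgren--Pitts discretization of each $\Phi(x)$, diagonalization over a mesh scale and the parameter grid, endpoint preservation, and a localization argument for $K$) is the same framework the paper uses by invoking \cite[Theorem 2]{wang2022min}, but the mechanism you actually describe for discretizing a single cycle is not that construction, and as stated it does not deliver the estimates you need. Pushing a cycle onto the $n$-skeleton of an ambient cubical decomposition (a Federer--Fleming type projection) only yields a multiplicative mass bound $\M(\mathbf{a}_\ell(T))\le C\,\M(T)$; your additive estimate (a), with error controlled solely by the mass of $T$ near the lower skeleta, is exactly the delicate point that the Pitts/Marques--Neves scheme achieves by a completely different cut-and-paste along level sets of distance functions --- here $\dist(G\cdot p,\cdot)$ --- together with the equivariant isoperimetric Lemma \ref{Lem: isoperimetric}; it is not a consequence of ``Lemma \ref{Lem: isoperimetric} and the coarea/slicing formula'' for a skeleton projection. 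Without the additive form of (a), conclusion (ii), and hence $\bL(S)\le\sup_{x\in I}\M(\Phi(x))$, is lost, and this inequality is the whole point of the theorem in the later min-max arguments. In addition, your construction presupposes $G$-invariant cubical decompositions of $N$ of arbitrarily small mesh, adapted simultaneously to the orbit-type stratification, to $\partial M$, and to $\partial K$, with lower skeleta covered by boundedly many orbit tubes of comparable radius; no such quantitative equivariant decomposition is established (equivariant triangulations exist, but not with the metric control needed to convert your $\Theta_\ell$ into the orbit-concentration ${\bf m}^G$), so this is a genuinely missing ingredient rather than a routine adaptation. The paper avoids all of this by citing \cite[Theorem 2]{wang2022min}, which is the combinatorial argument of \cite[Theorem 13.1]{marques2014min} run with $\dist(G\cdot p,\cdot)$ in place of $\dist(p,\cdot)$ and Lemma \ref{Lem: isoperimetric} in place of the non-equivariant isoperimetric choice.

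Two further gaps are worth flagging. Your justification of (iv) only covers endpoints whose supports lie in the chosen skeleta (``true in our applications''), whereas the statement asserts $\phi_i([0])=\Phi(0)$ and $\phi_i([1])=\Phi(1)$ for an arbitrary flat-continuous $\Phi$ with bounded mass and no orbit concentration; in the actual construction this is automatic because the discretization can be made to agree with $\Phi$ on $0$-dimensional faces, which is how the paper argues via \cite[Proposition 13.5(ii)]{marques2014min} and \cite[Theorem 13.1(iv)]{marques2014min}. Finally, for the support clause the paper does not need $\partial K$ to be a subcomplex of anything: it applies the already-established discretization separately to $\Phi\llcorner\alpha$ inside the compact $G$-domain $K$ and to $\Phi\llcorner(I\setminus \interior(\alpha))$ in $M$, and then glues the two discrete maps, using precisely the endpoint preservation (iv) to match them on $\partial\alpha$; adopting this localize-and-glue argument would let you drop the adapted-complex hypothesis entirely.
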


\begin{proof}
	The statements in (i)-(iii) follow directly from \cite[Theorem 2]{wang2022min}. 
	Note the proof of \cite[Theorem 2]{wang2022min} is basically the combinatorial approach in \cite[Theorem 13.1]{marques2014min} with Lemma \ref{Lem: isoperimetric} in place of \cite[Corollary 1.14]{almgren1962homotopy} and $\dist(G\cdot p, \cdot)$ in place of $\dist(p, \cdot)$. 
	Meanwhile, since the maps are defined on $1$-dimensional cubical complex, (iv) follows from \cite[Proposition 13.5(ii)]{marques2014min} and the combinatorial constructions of \cite[Theorem 13.1(iv)]{marques2014min}. 
	Moreover, these arguments would also carry over in the case $\partial M\neq \emptyset$, and thus (i)-(iv) are still valid when $M$ has boundary. 
	Finally, if $K$ and $\alpha\in I(1,j)$ are given as in the last statement. 
	Then we can apply the above discretization result to $\Phi\llcorner\alpha$ in $K$ and $\Phi\llcorner(I\setminus \interior (\alpha))$ in $M$ respectively. 
	Note the boundary values are unchanged by (iv). 
	Hence, the discrete maps defined in $\alpha$ and $I\setminus \interior (\alpha)$ can be connected together, which gives the last statement. 
\end{proof}

The following interpolation theorem (c.f. \cite[Theorem 3]{wang2022min}) suggests that a $\M$-continuous map into $\Z_n^G(M;\mZ_2)$ can be generated from a discrete map with small $\M$-fineness. 

\begin{theorem}[Interpolation Theorem]\label{Thm: interpolation}
	For $m=1,2$, there exists a positive constant $C_0=C_0(M,G,m)$ so that if
	$\phi:I(m,k)_0\rightarrow \mathcal{Z}_n^G(M; \mathbb{Z}_2)$
	has ${\bf f}_{\M}(\phi)<\epsilon_{M}$ with $\epsilon_{M}>0$ given in Lemma \ref{Lem: isoperimetric}, then there exists a map
	$$ \Phi: I^m \rightarrow \mathcal{Z}_n^G(M;\mathbb{Z}_2)$$
	continuous in the ${\bf M}$-topology satisfying:
	\begin{itemize}
		\item[(i)] $\Phi(x)=\phi(x)$ for all $x\in I(m,k)_0$;
		\item[(ii)] if $\alpha$ is some $j$-cell in $I(m,k)$, then $\Phi$ restricted to $\alpha$ depends only on the values of $\phi$ assumed on the vertices of  $\alpha$;
		\item[(iii)] $\sup\{{\bf M}(\Phi(x)-\Phi(y)) :  x,y\mbox{ lie in a common cell of } I(m,k)\}\leq C_0{\bf f}_\M(\phi)$;
		\item[(iv)] for any $\alpha\in I(m,k)_j$, if $\phi\llcorner \alpha_0\equiv T\in \Z_n^G(M;\mZ_2)$ is a constant, then $\Phi\llcorner\alpha\equiv T$.
	\end{itemize}
\end{theorem}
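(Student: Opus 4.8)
The plan is to recover $\Phi$ from the equivariant continuous-realization procedure of \cite[Theorem 3]{wang2022min} (which is itself the $G$-equivariant adaptation of the interpolation machinery of Marques--Neves \cite{marques2014min}); since the present definitions differ only cosmetically from those in \cite{wang2022min}, what is really needed is to rerun that construction, replacing every isoperimetric filling by the $G$-invariant one of Lemma \ref{Lem: isoperimetric} and choosing every auxiliary chart, subdivision, and cone vertex compatibly with the $G$-action. Because $G$ acts by isometries, none of the mass estimates change, so the constant $C_0$ will again depend only on $M$, $G$, and $m$.

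First I would reduce to the case where $\f_\M(\phi)$ is arbitrarily small relative to $\epsilon_M$ by passing to a subdivision of $I(m,k)$, which does not affect (i)--(iv). Then I would build $\Phi$ by induction on the dimension $j$ of the skeleton of $I(m,k)$, maintaining the inductive data that $\Phi$ is $\M$-continuous on the $(j-1)$-skeleton, agrees with $\phi$ on $I(m,k)_0$, has its restriction to each cell determined solely by the vertex values of $\phi$ on that cell, satisfies (iii) on each $(j-1)$-cell with a constant $C_{j-1}$, and is constant on any cell all of whose vertices carry the same value. The base case $j=0$ is simply $\Phi=\phi$ on $I(m,k)_0$.

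For the one-dimensional step, on a $1$-cell $\alpha=[a,b]$ set $S=\phi(a)$, $T=\phi(b)$, so $\M(T-S)\le\f_\M(\phi)<\epsilon_M$; Lemma \ref{Lem: isoperimetric} then produces the unique $Q\in\mI_{n+1}^G(M;\mZ_2)$ with $\partial Q=T-S$ and $\M(Q)\le C_M\mathcal F(T-S)\le C_M\M(T-S)$, and $g_\#Q=Q$ follows because $g_\#Q$ is again an isoperimetric choice for the $G$-invariant difference $T-S$. Using a fixed finite cover of $M$ by $G$-adapted tubes $B^G_\rho(p)$ (with the slice theorem arranging that $G$ acts linearly on the normal factor) and a $G$-invariant partition of unity, I would carry out the Euclidean cone construction of \cite{marques2014min} on the local pieces of $Q$ and concatenate, obtaining a path in $\Z_n^G(M;\mZ_2)$ from $S$ to $T$ with $\M(\Phi(x)-\Phi(y))\le C_1\M(T-S)$ for all $x,y\in\alpha$; when $S=T$ one has $Q=0$ and the path is constant, which gives (iv), while (ii) is automatic from the cell-by-cell nature of the construction. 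For $j\ge 2$ and a $j$-cell $\alpha$, $\Phi$ is already defined with small mass-oscillation on $\partial\alpha$, and I would parametrise $\alpha$ as a cone over $\partial\alpha$ and fill it by iterating the same $G$-equivariant isoperimetric cone construction over a fine subdivision, exactly as in \cite{marques2014min}, producing an $\M$-continuous extension with $\M(\Phi(x)-\Phi(y))\le C_j\f_\M(\phi)$ on $\alpha$, depending only on $\phi|_{\alpha_0}$ and constant whenever $\phi|_{\alpha_0}$ is. Setting $C_0:=\max_{1\le j\le m}C_j$ completes the induction.

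The main obstacle is the equivariant bookkeeping inside the local cone constructions: one must choose the cover, the subdivisions, and the cone vertices so that the whole process commutes with the $G$-action, and invoke the \emph{uniqueness} clause of Lemma \ref{Lem: isoperimetric} at each stage to certify that the chains produced are genuinely $G$-invariant rather than only approximately so, while keeping the constants $C_j$ controlled by $M$, $G$, $m$. Once this is arranged, the isometry of the action makes the remaining estimates identical to the non-equivariant ones in \cite{marques2014min}, precisely as in the proof of Theorem \ref{Thm: discretization} above, where Lemma \ref{Lem: isoperimetric} and $\dist(G\cdot p,\cdot)$ stand in for Almgren's isoperimetric inequality and $\dist(p,\cdot)$.
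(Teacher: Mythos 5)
Your proposal is in substance the same argument as the paper's, but note that the paper does not re-prove the interpolation machinery: for (i)--(iii) it simply invokes the already-established equivariant interpolation theorem \cite[Theorem 3]{wang2022min} (and, for $\partial M\neq\emptyset$, the constructions of \cite[Theorem 4.13]{wang2022free} with Lemma \ref{Lem: isoperimetric} in place of the relative isoperimetric lemma), and the only new content of its proof is clause (iv), which it obtains exactly as you do: if $\phi$ is constant on the vertices of a cell, every isoperimetric choice along its $1$-cells vanishes by the uniqueness in Lemma \ref{Lem: isoperimetric}, hence the filling maps $h_\beta$ of \cite[Theorem 4.13]{wang2022free} vanish and the extension is constant on that cell via \cite[4.5]{almgren1962homotopy}. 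What you propose to re-derive cell by cell is precisely the cited theorem, and your outline is consistent with how that proof goes, with two cautions. First, the opening reduction ``make $\f_\M(\phi)$ arbitrarily small relative to $\epsilon_M$ by subdividing $I(m,k)$'' is not available: refining the grid and extending $\phi$ by nearest projection leaves the fineness unchanged, and producing genuinely finer vertex values is the interpolation problem itself; fortunately the step is unnecessary, since the hypothesis $\f_\M(\phi)<\epsilon_{M}$ already lets you apply Lemma \ref{Lem: isoperimetric} on every edge. Second, the ``Euclidean cone construction with equivariantly chosen cone vertices and a $G$-invariant partition of unity'' is exactly the delicate point you would have to carry out in full: in \cite{wang2022min}\cite{wang2022free} the deformations are organized around the orbit-distance function $\dist(G\cdot p,\cdot)$ and $G$-adapted coordinates rather than naive coning, which is how equivariance is preserved with constants depending only on $M$, $G$, $m$; you correctly identify this as the main obstacle, but in your sketch it remains the unproved core, which the paper sidesteps by citation.
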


We call the map $\Phi$ in Theorem \ref{Thm: interpolation} the {\em Almgren $G$-extension} of $\phi$. 

\begin{proof}
	The statements in (i)-(iii) follow directly from \cite[Theorem 3]{wang2022min}. 
	If $\partial M\neq\emptyset$, then the constructions in \cite[Theorem 4.13]{wang2022free} would carry over with $\Z_n^G(M;\mZ_2)$ and Lemma \ref{Lem: isoperimetric} in place of $\Z_n^G(M,\partial M;\mZ_2)$ and \cite[Lemma 3.10]{wang2022free}. 
	If $\phi\llcorner \alpha_0\equiv T\in \Z_n^G(M;\mZ_2)$ is a constant for some $j$-cell $\alpha$, then for any $1$-cell $\gamma_1=[a,b]\in  \alpha_1$, the isoperimetric choice $Q(\gamma_1 ) $ of $\phi(a)$ and $\phi(b)$ (Lemma \ref{Lem: isoperimetric}) must be $0$. 
	Hence, for any cell $\beta\subset\alpha$, the map $h_\beta$ constructed in \cite[Theorem 4.13]{wang2022free} is $0$, which implies $\Phi\llcorner\alpha \equiv T$ by \cite[4.5]{almgren1962homotopy}. 
\end{proof}

Using the discretization/interpolation theorem \ref{Thm: discretization} and \ref{Thm: interpolation}, we have the following corollary (c.f. \cite[Corollary 1]{wang2022min}):
\begin{corollary}\label{Cor: discrete and continuous}
	Let $\Phi: I\to\Z_n^G(M;\mZ_2)$ be a $\F$-continuous map with no concentration of mass on orbits and $\sup_{x\in I}\M(\Phi(x))<\infty$. 
	Suppose $S=\{\phi_i\}_{i\in\N}$ is given by Theorem \ref{Thm: discretization} applied to $\Phi$, and $\Phi_i$ is the Almgren $G$-extension of $\phi_i$ given by Theorem \ref{Thm: interpolation} for $i$ sufficiently large. 
	Then 
	\begin{itemize}
		\item[(i)] for each $i$ large enough, there is a relative homotopy map $H_i : I^2\to \Z_n^G(M;\F;\mZ_2)$ with $H_i(0,\cdot)=\Phi$, $H_i(1,\cdot)=\Phi_i$, $H_i(\cdot , 0)\equiv \Phi(0)=\Phi_i(0)$, and $H_i(\cdot , 1)\equiv \Phi(1)=\Phi_i(1)$; 
		\item[(ii)] ${\bf L}(\{\Phi_i\}_{i\in\N}) = {\bf L}(S) \leq \sup_{x\in I}\M(\Phi(x))$.
	\end{itemize}
\end{corollary}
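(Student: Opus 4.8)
The plan is to deduce (ii) directly from the mass/oscillation bounds of Theorems~\ref{Thm: discretization} and~\ref{Thm: interpolation}, and to obtain (i) by a cell-by-cell homotopy assembled from the equivariant isoperimetric Lemma~\ref{Lem: isoperimetric}, following the non-equivariant constructions of \cite[\S13]{marques2014min} that already underlie those theorems. Part (ii) is immediate: fixing $i$ large enough that $\delta_i<\epsilon_M$ (so $\Phi_i$ is defined) and using Theorem~\ref{Thm: interpolation}(i),(iii) — namely $\Phi_i(a)=\phi_i(a)$ at vertices and $\M(\Phi_i(x)-\Phi_i(a))\le C_0{\bf f}_\M(\phi_i)<C_0\delta_i$ for $x$ and $a$ in a common cell of $I(1,j_i)$ — one gets $\sup_{x\in I}\M(\Phi_i(x))\le \max_{z\in I(1,j_i)_0}\M(\phi_i(z))+C_0\delta_i$, while the reverse inequality holds since $\Phi_i\equiv\phi_i$ on $I(1,j_i)_0$; taking $\limsup_{i\to\infty}$ (with $\delta_i\to0$) gives ${\bf L}(\{\Phi_i\}_{i\in\N})={\bf L}(S)$, and ${\bf L}(S)\le\sup_{x\in I}\M(\Phi(x))$ is exactly Theorem~\ref{Thm: discretization}(ii).

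For (i), I would fix $i$ large enough that $\delta_i<\epsilon_M$ and, since $\Phi$ is uniformly $\F$-continuous on $I$ and $j_i\to\infty$, small enough that $\Phi$ has $\F$-oscillation at most $\delta_i$ on every cell of $I(1,j_i)$. Then at each vertex $a\in I(1,j_i)_0$ we have $\F(\Phi(a)-\Phi_i(a))=\F(\Phi(a)-\phi_i(a))\le\delta_i$ by Theorems~\ref{Thm: discretization}(iii) and~\ref{Thm: interpolation}(i), and on each $1$-cell $[a,b]\in I(1,j_i)_1$ the curve $\Phi\llcorner[a,b]$ has $\F$-oscillation $\le\delta_i$ while $\Phi_i\llcorner[a,b]$ has $\M$-oscillation $<C_0\delta_i$ (Theorem~\ref{Thm: interpolation}(iii)). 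I would then construct $H_i:I\times I\to\Z_n^G(M;\F;\mZ_2)$ (homotopy parameter first, curve parameter second) cell by cell over $I(1,j_i)$: on a square $[a,b]\times I$ the four sides carry the curve $\Phi\llcorner[a,b]$, the curve $\Phi_i\llcorner[a,b]$, and short $\F$-continuous paths from $\Phi(a)$ to $\Phi_i(a)=\phi_i(a)$ and from $\Phi(b)$ to $\Phi_i(b)=\phi_i(b)$, the last two obtained by coning off the $G$-invariant isoperimetric fillings of Lemma~\ref{Lem: isoperimetric} (the device that makes $\Z_n^G(M;\mZ_2)$ $\F$-path-connected, cf. \cite[Claim 5.3]{marques2021morse}, \cite[4.5]{almgren1962homotopy}); since all four sides are mutually flat-close, the square can be filled through $\F$-continuous cycles, and $G$-invariance is preserved because every filling is taken $G$-invariant and every cone over an orbit $G\cdot p$, i.e. with $\dist(G\cdot p,\cdot)$ replacing $\dist(p,\cdot)$. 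The endpoint relations $H_i(\cdot,0)\equiv\Phi(0)=\Phi_i(0)$ and $H_i(\cdot,1)\equiv\Phi(1)=\Phi_i(1)$ come from Theorems~\ref{Thm: discretization}(iv) and~\ref{Thm: interpolation}(i).

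The hard part will be the coherent gluing of these cell-wise squares: the fillings chosen along a shared vertical edge must agree, and each square must be genuinely $\F$-continuous, so what is really needed is the ``local simple-connectedness'' of $\Z_n^G(M;\F;\mZ_2)$ and not merely its path-connectedness. This is precisely the combinatorial Federer--Fleming and cone machinery of \cite[\S13]{marques2014min} that drives Theorems~\ref{Thm: discretization} and~\ref{Thm: interpolation}, and the only equivariant modifications are the systematic use of the $G$-invariant isoperimetric choices of Lemma~\ref{Lem: isoperimetric} and of geodesic tubes $B^G_r(p)$ around orbits in place of balls $B_r(p)$, both of which are routine. I would also note that no mass bound on $H_i$ is required — the statement asks only for $\F$-continuity — so the possibly large masses of the intermediate fillings cause no difficulty.
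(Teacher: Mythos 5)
Your part (ii) is fine and is essentially what the paper extracts from the interpolation bounds, so I will focus on part (i), where your plan has a genuine gap. The step ``since all four sides are mutually flat-close, the square can be filled through $\F$-continuous cycles'' is precisely the hard point, and it is not delivered by the tools you cite. Lemma \ref{Lem: isoperimetric} together with the cone/cut-and-paste device of \cite[4.5]{almgren1962homotopy} and \cite[Claim 5.3]{marques2021morse} produces $\F$-continuous \emph{paths} between two flat-close cycles (your vertical edges are fine), and the Federer--Fleming machinery behind Theorems \ref{Thm: discretization} and \ref{Thm: interpolation} produces \emph{discrete} homotopies of small $\M$-fineness and their mass-continuous Almgren extensions; neither gives you a flat-continuous disk whose prescribed boundary contains the side $\Phi\llcorner[a,b]$, which is only flat-continuous with no mass control along the side. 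Filling such a square is exactly the original problem (connect a merely $\F$-continuous path to a mass-continuous one by an $\F$-continuous homotopy) localized to one cell, so as written your argument is circular: if you tried to fill the square by discretizing and interpolating its boundary loop, you would again need to know that the loop is homotopic to its Almgren extension, i.e.\ the statement being proved. A correct completion along your lines would need the injectivity of $F_M:\pi_1(\Z_n^G(M;\mZ_2))\to H_{n+1}(M;\mZ_2)$ from \cite[Theorem 9]{wang2022min} (a small-flat-diameter loop has trivial $F_M$-image by a telescoping/uniqueness argument with Lemma \ref{Lem: isoperimetric}, hence bounds a disk), but that is a substantially different input from the ``routine'' cone machinery you invoke, and you would still have to check the gluing and relative-endpoint details.

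The paper avoids the single-step filling altogether: following \cite[Corollary 3.9]{marques2017existence} (with Theorems \ref{Thm: discretization}, \ref{Thm: interpolation} in place of the non-equivariant versions, and with the equivariant validity of \cite[Theorem 8.2]{almgren1962homotopy}), the homotopy from $\Phi_i$ to $\Phi$ is built as an \emph{infinite concatenation} of the mass-continuous homotopies between successive Almgren extensions $\Phi_i\simeq\Phi_{i+1}\simeq\Phi_{i+2}\simeq\cdots$, obtained by interpolating the discrete homotopies $\psi_j$ whose fineness tends to zero; since $\sup_x\F(\Phi_j(x)-\Phi(x))\to 0$ (by Theorem \ref{Thm: discretization}(ii)(iii), Theorem \ref{Thm: interpolation}(iii), and the uniform $\F$-continuity of $\Phi$), the concatenated map is $\F$-continuous at the limiting slice, where it equals $\Phi$. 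The relative boundary behavior is then read off from item (iv) of Theorems \ref{Thm: discretization} and \ref{Thm: interpolation} together with the homotopy constructions of \cite[Propositions 3.3, 3.8]{marques2017existence}, rather than from an ad hoc choice of constant vertical paths at $x=0,1$. So either adopt this concatenation scheme or explicitly bring in the injectivity of $F_M$; as it stands, your square-filling claim is unsupported.
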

	
\begin{proof}
	Using Theorem \ref{Thm: interpolation} and the arguments of Almgren \cite{almgren1962homotopy}, we see \cite[Theorem 8.2]{almgren1962homotopy} is valid in our $G$-invariant settings (even if $\partial M$ may be non-empty). 
	Hence, the proof of \cite[Corollary 3.9]{marques2017existence} would carry over with Theorem \ref{Thm: discretization} and \ref{Thm: interpolation} in place of \cite[Theorem 3.6, 3.6]{marques2017existence}. 
	Thus, $\Phi_i$ is homotopic to $\Phi$ in $\Z_n^G(M;\F;\mZ_2)$ for $i$-large, and (ii) is valid. 
	Additionally, by (iv) in Theorem \ref{Thm: discretization} and \ref{Thm: interpolation}, we have $\Phi(0)=\phi_i([0])=\Phi_i(0)$ and $\Phi(1)=\phi_i([1])=\Phi_i(1)$ for all $i$-large. 
	Therefore, combining (iv) in Theorem \ref{Thm: discretization} and \ref{Thm: interpolation} with the homotopy constructions in \cite[Proposition 3.3, 3.8]{marques2017existence}, one easily verifies that the homotopy map $H_i$ of $\Phi$ and $\Phi_i$ is relative to the boundary values. 
\end{proof}

Let $\{\Phi_i\}_{i\in\N}\subset \mathcal{P}^G$ be any min-max sequence. 
If $\partial M=\emptyset$, then we can apply Corollary \ref{Cor: discrete and continuous} to each $\Phi_i$ and obtain a sequence of $\M$-continuous curves $\{\Phi_j^i\}_{j\in\N}$ relative homotopic to $\Phi_i$ in $\Z_n^G(M;\F;\mZ_2)$ and ${\bf L}(\{\Phi_j^i\}_{j\in\N}) \leq \sup_{x\in I}\M(\Phi_i(x))$. 
Choose $j(i)$ sufficiently large so that $\sup_{x\in I}\M(\Phi_{j(i)}^i(x)) \leq \sup_{x\in I}\M(\Phi_i(x)) + \frac{1}{i}$. 
Hence, we have $\{\Phi_{j(i)}^i\}_{i\in\N}\subset \mathcal{P}^G(M)$ is a min-max sequence continuous in the $\M$-topology and so in the $\mF$-topology. 

For the case $\partial M\neq\emptyset$, we can apply the above arguments to each $\Phi_i\llcorner J$ in a $G$-submanifold $M_{\eta_i}$ given by Definition \ref{Def: sweepout and width in M with boundary}(iii) with $x_0=\frac{1}{3}$, and get $\Phi_{j(i)}^i: J \to \Z_n^G(M_{\eta_i};\M;\mZ_2 )$ satisfying 
\begin{itemize}
	\item $\Phi_{j(i)}^i$ is relative homotopic to $\Phi_i\llcorner J$ in $\Z_n^G(M_{\eta_i};\F;\mZ_2 )$,
	\item $\sup_{x\in J}\M(\Phi_{j(i)}^i(x)) \leq \sup_{x\in J}\M(\Phi_i(x)) + \frac{1}{i}$.
\end{itemize}
Since the homotopy map of $\Phi_{j(i)}^i$ and $\Phi_i\llcorner J$ is relative to the boundary values, we can define $ \Phi_{j(i)}^i \llcorner [0, \frac{1}{3} ] = \Phi_i\llcorner [0, \frac{1}{3} ] $ and see $\{\Phi_{j(i)}^i\}_{i\in\N}\subset \mathcal{P}^G(M,\partial M)$ is a sequence of $\mF$-continuous min-max sequence. 

Therefore, the above arguments give the following corollary, which implies we only need to consider the $\mF$-continuous $G$-sweepouts. 
\begin{corollary}\label{Cor: F-continuous width}
	The $G$-width defined in Definition \ref{Def: width of M} and \ref{Def: sweepout and width in M with boundary} satisfies 
	$$W^G = \inf \{ \sup_{x\in I}\M(\Phi(x)) : \Phi\in \mathcal{P}^G \mbox{ is $\mF$-continuous} \}.$$
\end{corollary}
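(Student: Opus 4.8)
The plan is to show the two definitions of $W^G$ agree by proving each is bounded by the other; since every $\mF$-continuous $G$-sweepout is in particular $\F$-continuous with no concentration of mass on orbits, the displayed infimum is taken over a subset of $\mathcal{P}^G$, hence is $\geq W^G$. The real content is the reverse inequality, and for this I would run the discretization--interpolation machinery exactly as set up in the paragraphs immediately preceding the statement: given any min-max sequence $\{\Phi_i\}_{i\in\N}\subset\mathcal{P}^G$, produce an $\mF$-continuous min-max sequence with the same width.

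Concretely, I would split into the two cases. First suppose $\partial M=\emptyset$. Fix $i$. Apply Corollary \ref{Cor: discrete and continuous} to $\Phi_i$: this yields a sequence $\{\Phi^i_j\}_{j\in\N}$ of $\M$-continuous (hence $\mF$-continuous) curves, each relatively homotopic to $\Phi_i$ in $\Z_n^G(M;\F;\mZ_2)$, with $\bL(\{\Phi^i_j\}_{j\in\N})\leq\sup_{x\in I}\M(\Phi_i(x))$. Because each $\Phi^i_j$ is homotopic to $\Phi_i$ with fixed endpoints $\Phi_i(0)=0=\Phi_i(1)$ (or the appropriate boundary values), it still satisfies $F_M(\Phi^i_j)=[M]$, so $\Phi^i_j\in\mathcal{P}^G(M)$; and being $\mF$-continuous it has no concentration of mass on orbits by Lemma \ref{Lem: no mass concentration}. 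Now choose $j(i)$ so large that $\sup_{x\in I}\M(\Phi^i_{j(i)}(x))\leq\sup_{x\in I}\M(\Phi_i(x))+\tfrac1i$. Then $\{\Phi^i_{j(i)}\}_{i\in\N}$ is an $\mF$-continuous min-max sequence, so $\inf\{\sup_x\M(\Phi) : \Phi\in\mathcal{P}^G \text{ $\mF$-continuous}\}\leq \bL(\{\Phi^i_{j(i)}\}_{i\in\N})=W^G$, giving the claim.

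For $\partial M\neq\emptyset$ the only wrinkle is the foliation part: by Definition \ref{Def: sweepout and width in M with boundary} each $\Phi_i$ restricted to $[0,\tfrac13]$ is already a smooth (hence $\mF$-continuous) $G$-hypersurface family, and by (iii) there is $\eta_i>0$ with $\spt(\Phi_i(x))\subset\subset M_{\eta_i}$ for $x\in J=[\tfrac13,1]$. So I would apply the discretization/interpolation argument only to $\Phi_i\llcorner J$ inside the closed $G$-manifold $M_{\eta_i}$, obtaining $\Phi^i_{j(i)}:J\to\Z_n^G(M_{\eta_i};\M;\mZ_2)$ relatively homotopic to $\Phi_i\llcorner J$ with width increased by at most $\tfrac1i$, then glue back $\Phi_i\llcorner[0,\tfrac13]$ on the front (legal since the homotopy is relative to the boundary value $\Phi_i(\tfrac13)$, which is supported in $M_{\eta_i}$). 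Condition (iii) of Definition \ref{Def: sweepout and width in M with boundary} persists because the support stays inside $M_{\eta_i}$, and the foliation condition (ii) is untouched, so $\Phi^i_{j(i)}\in\mathcal{P}^G(M,\partial M)$. The rest is as before.

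I expect the main obstacle to be purely bookkeeping rather than conceptual: verifying that the discrete-to-continuous passage genuinely preserves membership in $\mathcal{P}^G$ — i.e. that the Almgren $G$-extension $\Phi^i_j$ is relatively homotopic to $\Phi_i$ so that $F_M$ is unchanged (this is exactly Corollary \ref{Cor: discrete and continuous}(i), which is why that corollary was stated with the relative homotopy), and that no-concentration-of-mass is automatic from $\mF$-continuity via Lemma \ref{Lem: no mass concentration}. In the boundary case one must also be a little careful that the cut-off level $\eta_i$ may depend on $i$, but since we only need each individual $\Phi^i_{j(i)}$ to lie in $\mathcal{P}^G(M,\partial M)$ (not a uniform $\eta$), this causes no trouble. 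Everything needed has already been assembled in the discussion preceding the corollary, so the proof is essentially a one-paragraph invocation of that discussion.
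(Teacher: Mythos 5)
Your proposal is correct and follows essentially the same route as the paper: the easy inequality from the inclusion of the $\mF$-continuous sweepouts into $\mathcal{P}^G$, and the reverse inequality by applying Corollary \ref{Cor: discrete and continuous} to a min-max sequence (restricted to $\Phi_i\llcorner J$ inside $M_{\eta_i}$ and glued back to the foliation part when $\partial M\neq\emptyset$), using the relative homotopy to keep membership in $\mathcal{P}^G$ and Lemma \ref{Lem: no mass concentration} for no concentration of mass. The only slip is cosmetic: in the closed case the endpoints are $\Phi_i(0)=\Phi_i(1)$ as a closed curve, not necessarily $0$, but homotopy invariance of $F_M$ handles this exactly as you indicate.
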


\subsection{Min-max theorems}\label{Subsec: min-max constructions}

We now use the min-max method to construct a minimal $G$-hypersurface (with multiplicity) so that the width $W^G$ is realized by its area. 


\medskip
\subsubsection{Closed manifolds}~

For the case that $M$ is closed, it follows from Remark \ref{Rem: sweepouts are homotopic} and Corollary \ref{Cor: F-continuous width} that ${\bm \Pi} := \{ \Phi\in \mathcal{P}^G(M) \mbox{ is $\mF$-continuous} \}$ is a {\em continuous $G$-homotopy class} in the sense of \cite[Definition 5]{wang2022min}, and $W^G(M) = {\bf L}({\bm \Pi})$ in the sense of \cite[Definition 6]{wang2022min}.  
Hence, we have the following min-max theorem by \cite[Theorem 8]{wang2022min}. (Note the assumptions on $M\setminus M^{prin}$ in \cite[Theorem 8]{wang2022min} can be removed by the modifications in \cite{wang2022free}, and the dimension assumption is modified in \cite[Theorem 5.1]{wang2022equivariant}.) 
\begin{theorem}\label{Thm: min-max theorem}
	Suppose $M$ is closed, i.e. $\partial M = \emptyset$, and $3\leq {\rm codim(G\cdot p)}\leq 7$ for all $p\in M$. 
	Then there exists an integral $G$-varifold $V\in\V_n^G(M)$ so that 
	$$\|V\|(M) = W^G(M), \quad {\rm and} \quad V=\sum_{i=1}^m n_i|\Sigma_i|,$$ 
	where $m,n_i\in\N$, $\{\Sigma_i\}_{i=1}^m$ are disjoint $G$-connected (Definition \ref{Def: G-connected}) smooth embedded closed minimal $G$-hypersurfaces. 
	Moreover, if $\Sigma_i$ does not admit a $G$-invariant unit normal vector field, then $n_i$ is an even number. 
\end{theorem}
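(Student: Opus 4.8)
The plan is to invoke the equivariant min-max machinery of \cite{wang2022min}\cite{wang2022free}\cite{wang2022equivariant} directly, since by the discussion preceding the statement the set ${\bm \Pi} = \{\Phi\in\mathcal{P}^G(M) : \Phi \mbox{ is }\mF\mbox{-continuous}\}$ is a continuous $G$-homotopy class with ${\bf L}({\bm \Pi}) = W^G(M) > 0$. First I would recall that, because $W^G(M)$ is strictly positive, any min-max sequence $\{\Phi_i\}_{i\in\N}\subset {\bm \Pi}$ cannot sweep out $M$ by ``small'' cycles, so the pull-tight procedure applies: after replacing $\{\Phi_i\}$ by a pulled-tight min-max sequence (using the vector field averaging in (\ref{Eq: average over G}) to stay in the $G$-invariant class), every $V\in \mathbf{C}(\{\Phi_i\}_{i\in\N})$ is a $G$-stationary integral varifold, hence stationary by (\ref{Eq: $G$-stationary}). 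Then I would quote the equivariant almost-minimizing property: there exists $V\in \mathbf{C}(\{\Phi_i\}_{i\in\N})$ that is $G$-almost minimizing in small annular tubes $\an(p,s,t)$ at every point $p\in M$, in the appropriate equivariant sense of \cite[Theorem 8]{wang2022min}. This is the content of the combinatorial ``existence of almost minimizing varifolds'' step and I would not reprove it.

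Next I would apply the equivariant regularity theory. Since $3\leq \mathrm{codim}(G\cdot p)\leq 7$ for all $p\in M$, the quotient argument of \cite{wang2022min} (refined in \cite{wang2022free}\cite{wang2022equivariant}) shows that a $G$-almost minimizing stationary integral $G$-varifold is induced by a smooth closed embedded minimal $G$-hypersurface: working in the orbit space and using the $O(k)$-invariant regularity results (Schoen--Simon, and the codimension bound) one gets smoothness of the support away from a set that the codimension hypothesis forces to be empty. Writing the support as a disjoint union of its $G$-connected components (Definition \ref{Def: G-connected}) gives $V = \sum_{i=1}^m n_i |\Sigma_i|$ with $n_i\in\N$ the multiplicities and $\|V\|(M) = \sum n_i \mathrm{Area}(\Sigma_i) = W^G(M)$.

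It remains to prove the parity statement: if some $\Sigma_i$ carries no $G$-invariant unit normal, then $n_i$ is even. For this I would argue with the $\mZ_2$-coefficient cycle structure. Recall each $\Phi_i(x)\in \Z_n^G(M;\mZ_2)$ is a mod-$2$ boundary, $\Phi_i(x) = \partial U$ for $U\in \mathbf{I}_{n+1}^G(M;\mZ_2)$. Fix a small $G$-tubular neighborhood $T$ of $\Sigma_i$ via $\exp_{\Sigma_i}^\perp$. If $\Sigma_i$ has no $G$-invariant unit normal, then the normal bundle $\mathbf{N}\Sigma_i$ has no $G$-invariant nowhere-zero section, which (since $\mathbf{N}\Sigma_i$ is a line bundle over the orbit space of $\Sigma_i$) means the associated $G$-double-cover of $\Sigma_i$ given by the unit normal is connected; equivalently, the ``local side'' of $\Sigma_i$ is not globally $G$-equivariantly two-sided. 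Then I would take the restriction of the approximating sequence $|\Phi_i(x_{i})|$ converging to $V$ near $\Sigma_i$ and compare mod-$2$ boundaries: because $U\cap T$ is $G$-invariant and $\partial(U\cap T)$ must reduce, in the quotient, to an even number of sheets parallel to $\Sigma_i$ (the ``twisted'' side forces the sheets to come in cancelling pairs when one tries to realize an odd multiple as a $G$-invariant boundary), the density of $V$ along $\Sigma_i$, namely $n_i$, must be even. Concretely this is the equivariant analogue of the classical fact that a one-sided minimal hypersurface arising as a min-max limit of boundaries appears with even multiplicity, and I would follow the structure of the non-equivariant argument of Pitts / Schoen--Simon adapted $G$-equivariantly as in \cite{wang2022min}.

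The main obstacle is this last parity step: one must carefully track the mod-$2$ boundary relation through the $\mF$-convergence near $\Sigma_i$ and verify that the obstruction to a $G$-invariant normal really does force an even density. The subtlety is that ``two-sidedness'' must be interpreted $G$-equivariantly — a $\Sigma_i$ which is two-sided as a hypersurface may still fail to admit a $G$-\emph{invariant} unit normal when $G$ acts on $\mathbf{N}\Sigma_i$ by orientation reversal on some component of the orbit, and the argument must descend to $\Sigma_i/G$ where the relevant line bundle and its orientability govern the parity. Handling the orbit-type stratification of $\Sigma_i$ (singular orbits of $G$ on $\Sigma_i$) so that the double-cover/boundary bookkeeping is valid there is the delicate point; everything else is a citation to the already-established equivariant min-max and regularity theorems.
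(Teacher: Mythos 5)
Your first two paragraphs (existence, pull-tight, almost minimizing, regularity, decomposition into $G$-connected components) match the paper, which simply cites \cite[Theorem 8]{wang2022min} and \cite[Theorem 5.1]{wang2022equivariant} for that part. The genuine gap is in the parity step. You try to read off the parity of $n_i$ directly from the sweepout cycles $\Phi_i(x_i)$ and their $G$-invariant fillings $U$, claiming that near $\Sigma_i$ the boundary $\partial(U\cap T)$ "reduces to an even number of sheets parallel to $\Sigma_i$". But the min-max cycles only converge to $V$ as varifolds; they carry no first variation or minimizing control, so near $\Sigma_i$ they have no sheet structure at all, and there is no a priori mechanism transferring the mod-$2$ coefficient of their flat limit to the parity of the varifold density $n_i$. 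The tool that makes such a transfer is White's theorem \cite[Theorem 1.1]{white2009currents}, and it cannot be applied to the raw sweepout cycles precisely because of this lack of control. Your own closing paragraph flags this as "the main obstacle", i.e. the step is acknowledged but not actually carried out.

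The paper supplies exactly the missing ingredient via the almost-minimizing structure. Since $V$ is $(G,\mZ_2)$-almost minimizing in annuli of boundary type, one chooses a small mean-convex $G$-tube $B^G_{2r}(p)$ around an orbit in $\Sigma_i$ meeting $\spt\|V\|$ only in $\Sigma_i$, and uses the $G$-replacement construction (\cite[Propositions 2,3]{wang2022min}, consistency as in \cite[Proposition 4.19]{wang2022equivariant}) to produce $T_j=\partial Q_j$ with $Q_j\in\mI_{n+1}^G(M;\mZ_2)$, $T_j$ locally mass minimizing in $B^G_r(p)$, and $|T_j|\to V$ as varifolds. Flat compactness gives $T_j\to T=\partial Q$ with $Q$ $G$-invariant; since $\spt(T)\subset\cup_i\Sigma_i$, the Constancy Theorem gives $T=\sum_i n_i'[[\Sigma_i]]$ with $n_i'\in\mZ_2$, and the $G$-invariance of $Q$ forces $n_i'=0$ whenever $\Sigma_i$ has no $G$-invariant unit normal (this is the rigorous version of your "cancelling pairs" intuition, and is the part of your sketch that is essentially right). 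Finally, the local mass-minimizing property of the $T_j$, together with a slicing argument giving local current convergence in $B^G_s(p)$, is what legitimizes the application of \cite[Theorem 1.1]{white2009currents} to conclude $n_i\equiv n_i'\equiv 0$ mod $2$. Without the replacement step your argument does not close, since the parity claim is being asserted for a sequence of cycles to which no such theorem applies.
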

\begin{proof}
	We only need to show the last statement since the existence and regularity of $V$ are given by \cite[Theorem 8]{wang2022min} (see also \cite[Theorem 5.1]{wang2022equivariant}). 
	Note the min-max varifold $V$ is $(G,\mZ_2)$-almost minimizing in annuli of {\em boundary type} in the sense of \cite[Definition 10,11]{wang2022min}. 
	Hence, for each $\Sigma_i$, we can take a small $G$-tube $B^G_{2r}(p)$ with center $G\cdot p\subset \Sigma_i$ and $r\in (0, \frac{\inj(G\cdot p)}{2} )$ so that 
	\begin{itemize}
		\item $V$ is $(G,\mZ_2)$-almost minimizing of boundary type in $B^G_{2r}(p)$;
		\item $B^G_{t}(p)$ has mean convex boundary for all $t\in (0, 2r)$;
		\item $B^G_{2r}(p)\cap \spt(\|V\|)\subset \Sigma_i$, and $\partial B^G_r(p)$ is transversal to $\Sigma_i$. 
	\end{itemize}
	Then by the constructions (\cite[Proposition 2, 3]{wang2022min}) and the consistency (\cite[Proposition 4.19]{wang2022equivariant}) of $G$-replacements, there exists a sequence $\{T_j\}_{j\in\N}\subset \Z_n^G(M;\mZ_2)$ so that 
	\begin{itemize}
		\item[(1)] $T_j=\partial Q_j$ is locally mass minimizing in $B^G_r(p)$ with $Q_j\in\mI_{n+1}^G(M;\mZ_2)$;
		\item[(2)] $|T_i|\to V$ in the sense of varifolds. 
	\end{itemize}
	By compactness, let $T_j\to T=\partial Q$ in the flat topology with $Q\in \mI_{n+1}^G(M;\mZ_2)$. 
	Thus, we have $\spt(T)\subset \spt(V) = \cup_{i=1}^m\Sigma_i$, which implies $T=\sum_{i=1}^m n_i'[[\Sigma_i]]$ for some $n_i'\in\mZ_2$ by the Constancy Theorem. 
	As a boundary of $Q\in  \mI_{n+1}^G(M;\mZ_2)$, we must have $n_i'=0$ if $\Sigma_i$ does not admit a $G$-invariant unit normal. 
	Now we can use the slicing theory (\cite[28.5]{simon1983lectures}) to take $s\in (r/2, r)$ so that $\M(\partial (T_j\llcorner B_s^G(p)))$ are uniformly bounded, and thus $T_j\llcorner B_s^G(p)$ converges up to a subsequence. 
	Finally, because of (1), 
	\cite[Theorem 1.1]{white2009currents} suggests $n_i\equiv n_i'$ mod $2$, and thus the multiplicity $n_i$ must be even for $\Sigma_i$ without a $G$-invariant unit normal. 
\end{proof}

\medskip
\subsubsection{Compact manifolds with boundary}~

Now we consider the case that $\partial M\neq\emptyset$. 
In this case, we make the assumption that 
\begin{equation}\label{Eq: assumption on boundary}
	H_{\partial M}>0, \quad {\rm and }\quad W^G(M,\partial M) > {\rm Area}(\partial M),
\end{equation}
where $H_{\partial M}$ is the mean curvature of $\partial M$ with respect to the inward unit normal $\nu_{\partial M}$. 
By Corollary \ref{Cor: F-continuous width}, we can take a min-max sequence $\{\Phi_i^*\}_{i\in\N}\subset \mathcal{P}^G(M,\partial M)$ that are continuous in the $\mF$-topology. 
The strategy is to use the following proposition to deform $\{\Phi_i^*\}_{i\in\N}$ into a new $\mF$-continuous min-max sequence so that every $V\in {\bf C}(\{\Phi_i^*\}_{i\in\N})$ is supported in a $G$-invariant subdomain $M_a\subset\subset M$. 
With this benefit, the min-max constructions can be restricted in the interior of $M$ to build a closed minimal $G$-hypersurface realizing the width $W^G(M,\partial M)$. 
This deformation approach is based on the idea of \cite[Lemma 2.2]{marques2012rigidity} and we list the details here for the sake of completeness. 
\begin{proposition}\label{Prop: deform sweepout for M with boundary}
	Suppose $\partial M \neq \emptyset$ satisfies (\ref{Eq: assumption on boundary}). 
	Then there exist a constant $a>0$ and a min-max sequence $\{\Phi_i^*\}_{i\in\N}\subset \mathcal{P}^G(M,\partial M)$ continuous in the $\mF$-topology so that 
	\begin{itemize}
		\item if $\M(\Phi_i^*(x)) \geq W^G(M,\partial M) - \delta $ with $\delta = \frac{1}{4}(W^G(M,\partial M) - {\rm Area}(\partial M))$, then $\spt(\Phi_i^*(x))\subset \subset M_a := \{p\in M:\dist_M(p,\partial M)\geq a\}$. 
	\end{itemize} 
\end{proposition}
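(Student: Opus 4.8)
The plan is to follow the deformation idea of Marques–Neves \cite[Lemma 2.2]{marques2012rigidity}, adapted to the $G$-invariant setting. Start with an arbitrary $\mF$-continuous min-max sequence $\{\Phi_i\}_{i\in\N}\subset \mathcal{P}^G(M,\partial M)$ given by Corollary \ref{Cor: F-continuous width}. The mean curvature hypothesis $H_{\partial M}>0$ lets us choose $a_0>0$ small so that for every $t\in[0,a_0]$ the hypersurface $\partial M_t = \exp_{\partial M}^\perp(t\cdot\nu_{\partial M})$ has positive mean curvature with respect to the inward normal, hence the nearest-point projection to $M_t$ (equivalently, the flow pushing inward along $-\nu$-type geodesics) is distance non-increasing on $n$-currents in a quantitative way: there is $\lambda>0$ with $\M\big(P_{t\#}T\big)\le \M(T)-\lambda t\,\M(T) + o(t)$ for currents supported near $\partial M$, or more robustly, the mass of the sweepout slices can only be decreased by composing with this retraction. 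The point is that pushing a slice $\Phi_i(x)$ inward past the collar never increases its mass when the slice is not already deep inside, and strictly decreases it by a definite amount unless the slice has small mass.

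The key steps, in order. First I would fix $a_0$ as above and consider the one-parameter family of maps $R_s:M\to M$, $s\in[0,1]$, where $R_s$ is the identity outside a collar of width $a_0$ and, inside, is a $G$-equivariant retraction that moves a point of $\partial M_t$ to $\partial M_{\min(t+s a_0,\,a_0)}$; equivariance is automatic since $\partial M$, $\nu_{\partial M}$, and $\exp^\perp$ are $G$-equivariant. Second, I would define the deformed sweepout $\Phi_i^s(x) := (R_s)_\#\Phi_i(x)$, check that for each $s$ this is still an element of $\mathcal{P}^G(M,\partial M)$ — properties (i) and (iii) of Definition \ref{Def: sweepout and width in M with boundary} are clear, and property (ii) holds because near $x=0$ the slices are the smooth foliation $\exp_{\partial M}^\perp(w(x,\cdot)\nu_{\partial M})$ and $R_s$ sends this foliation to another such smooth foliation with a reparametrized, still strictly increasing, profile function — and that $s\mapsto \Phi_i^s$ and $x\mapsto \Phi_i^s(x)$ are jointly $\mF$-continuous (using that pushforward by a fixed Lipschitz map is continuous in $\mF$ and that no concentration of mass on orbits is preserved, via Lemma \ref{Lem: no mass concentration} or a direct estimate). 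Third, I would run the pull-tight/deformation argument: if for some fixed small $s_0>0$ the sequence $\{\Phi_i^{s_0}\}$ still had a critical slice with support meeting the collar $M\setminus M_a$ (for $a = s_0 a_0/2$, say), then the mean-convexity estimate would give that a further push decreases $\sup_x\M$ below $W^G(M,\partial M)$ along a genuine competitor in $\mathcal{P}^G(M,\partial M)$, contradicting the definition of the width; the quantitative gap is controlled by $\delta = \tfrac14(W^G(M,\partial M)-\mathrm{Area}(\partial M))$, using that slices with $\M\ge W^G-\delta > \mathrm{Area}(\partial M)$ are too large to lie in the collar without the retraction strictly decreasing their mass. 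Setting $\Phi_i^* := \Phi_i^{s_0}$ and choosing $a>0$ accordingly finishes it.

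The main obstacle I expect is making the mean-convex collar estimate quantitatively correct and $G$-equivariantly clean: one needs that composing a slice with the inward retraction $R_s$ decreases mass by an amount proportional to how much mass sits in the collar, uniformly in $i$, and one must be careful that this estimate survives passing to varifold limits (so that it applies to elements of $\mathbf{C}(\{\Phi_i^*\})$, which are varifolds, not currents). The standard fix is to phrase the deformation directly on the sweepouts with the uniform first-variation bound coming from $H_{\partial M}\ge c>0$ on the whole collar $\overline{M\setminus M_{a_0}}$, combined with the isoperimetric Lemma \ref{Lem: isoperimetric} to keep the deformed family inside $\mathcal{P}^G$; the $G$-invariance causes no extra trouble because $R_s$ commutes with the $G$-action and $\delta V(X)=\delta V(X_G)$ by \eqref{Eq: $G$-stationary}. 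A secondary technical point is verifying clause (ii) of Definition \ref{Def: sweepout and width in M with boundary} for the deformed family, i.e. that the retracted boundary foliation is again generated by a smooth $G$-invariant profile with positive $x$-derivative at $0$; this is a direct computation with $R_s\circ \exp_{\partial M}^\perp$ but must be stated.
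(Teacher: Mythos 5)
Your proposal has two genuine gaps, both at the heart of the argument. First, the mass estimate you attribute to mean convexity is not justified: $H_{\partial M}>0$ (and positivity of $H_r$ on the level sets $\partial M_r$) controls only the trace of the second fundamental form, not the individual principal curvatures, so the plain inward retraction $R_s$ that translates $\partial M_t$ to $\partial M_{\min(t+sa_0,a_0)}$ can \emph{increase} the mass of a slice whose tangent planes contain the normal direction together with a direction of negative principal curvature of the level sets. Your claimed quantitative bound $\M(P_{t\#}T)\le \M(T)-\lambda t\,\M(T)+o(t)$ has no proof and is false in general. The paper avoids exactly this trap by flowing along $X=\phi(d)\nabla d$ with $\phi'+c\phi\le 0$, where $c$ bounds $|A_r|$ on the collar: the computation in (\ref{Eq: divergence}) shows ${\rm div}_S(\phi\nabla d)\le(\phi'+c\phi)\langle e_n,\nabla d\rangle^2-\phi H_{d(p)}\le 0$ for \emph{every} $n$-plane $S$, i.e.\ the exponentially decaying speed compensates the possible stretching of planes containing $\nabla d$, and only then is the first variation of every pushed slice nonpositive. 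Without this (or an equivalent device) your deformation step collapses, and so does the concluding contradiction argument, which in addition relies on the unsupported claim that slices of mass $\ge W^G-\delta$ meeting the collar must have their mass strictly decreased by a further push.

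Second, applying $R_s$ to the entire sweepout destroys membership in $\mathcal{P}^G(M,\partial M)$: clause (i) of Definition \ref{Def: sweepout and width in M with boundary} requires $\Phi(0)=[[\partial M]]$, but $(R_s)_\#[[\partial M]]=[[\partial M_{sa_0}]]\neq[[\partial M]]$, and clause (ii) likewise fails since the deformed foliation no longer starts at $\partial M$ with $w(0,\cdot)\equiv 0$; your assertion that (i) and (iii) are ``clear'' is therefore incorrect. The paper's proof leaves the initial segment untouched: the cut-off $\kappa_i$ (vanishing for $d\le\eta_i$) and the time-reparametrization $h_i$ with $h_i\equiv 0$ on $[0,\epsilon_i]$ guarantee $\Phi_i^*=\Phi_i$ near $x=0$, so the competitor property is preserved, and the localization of large-mass slices is then obtained \emph{directly} (no contradiction with the width needed): since $\M(\Phi_i(x))\le{\rm Area}(\partial M)+\delta<W^G-\delta$ on the initial segment, any slice with $\M\ge W^G-\delta$ occurs at a parameter $x>4\epsilon_i$, where Definition \ref{Def: sweepout and width in M with boundary}(iii) places its support compactly in $M_{2\eta_i}$ and the flow, run for the finite time $T_i$ with $F^i_{T_i}(M_{2\eta_i})\subset M_a$, has already carried it into $M_a$ while not increasing its mass. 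To repair your write-up you would need to replace the naive retraction by the $\phi(d)\nabla d$-flow (or prove an honest mass monotonicity for your $R_s$, which is not available under mean convexity alone) and to restrict the deformation away from the boundary foliation exactly as above.
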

\begin{proof}
	Let $a>0$ be small enough so that $d := \dist_M(\partial M , \cdot)$ is a $G$-invariant smooth function in a $4a$-neighborhood of $\partial M$. 
	By (\ref{Eq: assumption on boundary}), we can set $a>0$ even smaller so that for any $r\in [0, 3a]$, $\partial M_r = d^{-1}(r)$ has positive mean curvature $H_{r}$ with respect to the inner unit normal $\nabla d$. 
	Denote by $A_r$ the second fundamental form of $\partial M_r$, and $c=\sup_{r\in [0,3a],p\in \partial M_r}|A_r|(p)$. 
	Then we take the function $\phi \geq 0$ as in \cite[Lemma 2.2]{marques2012rigidity} so that 
	\begin{equation*}
		\phi ' + c\phi \leq 0, \qquad \phi(r) > 0 \mbox{ for $r < 2a$},\qquad \phi(r) = 0 \mbox{ for $r\geq 2a$}.
	\end{equation*}
	For any $p\in \interior (M)\setminus M_{3a}$ and $n$-subspace $S\subset T_pM$, let $\{e_i\}_{i=1}^n$ be an orthonormal basis of $S$, and $P: T_pM\to T_p\partial M_{d(p)}$ be the projection. 
	Since $\dim(S\cap T_p\partial M_{d(p)}) \geq n-1$, we can assume $\{e_i\}_{i=1}^{n-1}\cup \{e^*\}$ gives an orthonormal basis of $ T_p\partial M_{d(p)}$, where $e^*$ satisfies $\langle e^*, P(e_{n}) \rangle = |P(e_{n})|$. 
	Noting $\nabla d \perp T_p\partial M_{d(p)}$ and $\nabla_{\nabla d}\nabla d = 0$, we have
	\begin{eqnarray}\label{Eq: divergence}
		{\rm div}_S(\phi \nabla d ) &=& \phi'(d(p)) \cdot \langle e_{n}, \nabla d\rangle^2 + \phi(d(p)) \cdot \sum_{i=1}^n \langle \nabla_{e_i}\nabla d, e_i\rangle \nonumber
		\\ &=& \phi' \langle e_{n}, \nabla d\rangle^2 - \phi \sum_{i=1}^n A_{d(p)}(P(e_i), P(e_i))
		\\ &=& (\phi' + \phi A_{d(p)}(e^*, e^*) )\langle e_{n}, \nabla d\rangle^2 - \phi H_{d(p)} \nonumber
		\\ &\leq & (\phi' + c\phi )\langle e_{n}, \nabla d\rangle^2 - \phi H_{d(p)} \leq 0. \nonumber
	\end{eqnarray}
	By Corollary \ref{Cor: F-continuous width}, we can take any min-max sequence $\{\Phi_i\}_{i\in\N}\subset \mathcal{P}^G(M,\partial M)$ continuous in the $\mF$-metric. 
	Then for each $\Phi_i$, there exist $\epsilon_i >0$ and $\eta_i \in (0, \frac{a}{8} )$ so that 
	\begin{itemize}
		\item[(1)] $\Phi_i\llcorner [0, 4\epsilon_i ]$ are smooth $G$-hypersurfaces with $\M(\Phi_i(x)) \leq {\rm Area}(\partial M) + \delta$, $\forall x\in [0, 4\epsilon_i]$;
		\item[(2)] $\spt(\Phi_i(x))\subset\subset M_{2\eta_i}$ for all $x\in [\epsilon_i , 1 ]$. 
	\end{itemize}
	Let $\kappa_i$ be a cut-off function so that $\kappa_i(r)=0$ for $r\leq \eta_i$, and $\kappa_i(r)=1$ for $r\geq 2\eta_i$. 
	Then the $G$-vector field $X_i:=\kappa_i(d) \phi(d) \nabla d$ generates $G$-equivariant diffeomorphisms $\{F_t^i\}$. 
	By (2) and (\ref{Eq: divergence}), for any $x\in [\epsilon_i , 1 ]$ and $t_0 \geq 0$, we have
	\begin{eqnarray*}
		\frac{d}{dt}\Big|_{t=t_0} \M((F_t^i)_\#\Phi_i(x) ) &=& \frac{d}{dt}\Big|_{t=0}\| (F_t^i)_\# (F_{t_0}^i)_\#\Phi_i(x) \|(M) 
		\\ &=& \int {\rm div}_S(X_i)dV_{t_0,x} = \int {\rm div}_S(\phi \nabla d )dV_{t_0,x} \leq 0, 
	\end{eqnarray*}
	where $V_{t_0,x} := |(F_{t_0}^i)_\#\Phi_i(x)|\in \V_n^G(M_{2\eta_i})$. 
	Therefore, 
	\begin{equation}\label{Eq: decrease mass deformation}
		\M((F_{t}^i)_\#\Phi_i(x)) \leq \M(\Phi_i(x)), \quad \forall x\in [\epsilon_i , 1 ], ~t\geq 0. 
	\end{equation}
	Since $M_{2\eta_i}\setminus M_{2a} \subset \spt(X_i)\subset M_{\eta_i}\setminus \interior(M_{2a})$, we see $\lim_{t\to\infty} F_t^i(p)\in \partial M_{2a}$ for any $p\in M_{2\eta_i}\setminus M_{2a}$, and thus $F_{T_i}^i(M_{2\eta_i})\subset M_a$ for some $T_i>0$. 
	Choose a smooth function $h_i: [0,1]\to [0,T_i]$ with $h_i\llcorner [0, \epsilon_i ] =0$, $h_i\llcorner [2\epsilon_i, 1 ] = T_i$. 
	Then $\Phi_i^* (x) := (F^i_{h_i(x)})_\#\Phi_i(x)$ satisfies:
	\begin{itemize}
		\item[(a)] $\Phi_i^*(x) = \Phi_i(x)$ for $x\in [0,\epsilon_i ]$ (since $h_i=0$); 
		\item[(b)] $\M(\Phi_i^*(x)) \leq \M(\Phi_i(x))$ for all $x\in [\epsilon_i ,1]$ (by (\ref{Eq: decrease mass deformation}));
		\item[(c)] $\spt(\Phi_i^*(x)) \subset \subset M_a$ for all $x\in [2\epsilon_i , 1]$ (by (2) and the definitions of $T_i, h_i$). 
	\end{itemize}
	Clearly, $\{\Phi_i^*\}_{i\in\N}\subset \mathcal{P}^G(M,\partial M)$ is also an $\mF$-continuous min-max sequence. 
	Additionally, if $\M(\Phi_i^*(x))\geq W^G(M,\partial M) - \delta \geq {\rm Area}(\partial M) + \delta$, then $x\in (4\epsilon_i, 1]$ by (1)(a)(b), and thus $\spt(\Phi_i^*(x)) \subset \subset M_a$ by (c). 
\end{proof}

Next, we use the pull-tight arguments to make every $V\in {\bf C}(\{\Phi_i^*\}_{i\in\N})$ stationary in $M$. 
By Proposition \ref{Prop: deform sweepout for M with boundary}, the pull-tight procedure can be restricted in a $G$-subset $\interior(M_a)$. 

\begin{proposition}\label{Prop: tightening}
	Suppose $\partial M \neq \emptyset$ satisfies (\ref{Eq: assumption on boundary}) and $\delta := \frac{1}{4}(W^G(M,\partial M) - {\rm Area}(\partial M))$. 
	Let $a>0$ and $\{\Phi_i^*\}_{i\in\N}\subset \mathcal{P}^G(M,\partial M)$ be 
	given by Proposition \ref{Prop: deform sweepout for M with boundary}. 
	Then there is an $\mF$-continuous min-max sequence $\{\Phi_i\}_{i\in\N}\subset \mathcal{P}^G(M,\partial M)$ with
	\begin{itemize}
		\item[(i)] ${\bf C}(\{\Phi_i\}_{i\in\N}) \subset {\bf C}(\{\Phi_i^*\}_{i\in\N}) \cap \V_n^G( M_a )$;
		\item[(ii)] every $G$-varifold $V\in {\bf C}(\{\Phi_i\}_{i\in\N})$ is stationary in $M$;
		\item[(iii)] if $\M(\Phi_i(x)) \geq W^G(M,\partial M) - \delta $, then $\spt(\Phi_i(x))\subset \subset M_{\frac{a}{2}}$. 
	\end{itemize}
\end{proposition}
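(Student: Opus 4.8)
\noindent The plan is to run an equivariant version of the Pitts pull-tight procedure \cite{pitts2014existence} in the form of \cite{zhou2015min,marques2014min}, with two adaptations dictated by the present setting. First, since $G$ acts by isometries, the equivalence \eqref{Eq: $G$-stationary} lets us test and destroy first variations using only $G$-invariant vector fields, while the averaging formula \eqref{Eq: average over G} turns any competitor into a $G$-invariant one with unchanged first variation. Second, because $\partial M\neq\emptyset$ the deformation must be supported away from $\partial M$, in particular away from the boundary foliation of the sweepouts; this is exactly what Proposition~\ref{Prop: deform sweepout for M with boundary} provides, since every $G$-varifold we need to move will have support $\subset\subset M_a\subset\subset\interior(M_{a/2})$.

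\smallskip\noindent\emph{Construction.} Write $L:=W^G(M,\partial M)$. I would let $\mathcal{K}$ be the set of $V\in\V_n^G(M)$ with $\|V\|(M)\le 2L$ and $\spt\|V\|\subset M_a$; this is compact in the $\mF$-metric, and its subset $\mathcal{K}_\infty$ of $G$-varifolds stationary in $M$ (equivalently, by \eqref{Eq: $G$-stationary}, $G$-stationary) is $\mF$-closed. For $V\in\mathcal{K}\setminus\mathcal{K}_\infty$, non-stationarity together with the fact that $\delta V$ depends only on the values of a vector field near $\spt\|V\|\subset\subset\interior(M_{a/2})$ yields $X\in\mathfrak{X}(\interior(M_{a/2}))$ with $\delta V(X)<0$; replacing $X$ by $X_G$ as in \eqref{Eq: average over G} gives $X_V\in\mathfrak{X}^G(\interior(M_{a/2}))$ with $\delta V(X_V)=\delta V(X)<0$. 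The main step is then to splice these pointwise choices into a single assignment $V\mapsto X_V\in\mathfrak{X}^G(\interior(M_{a/2}))$ that is $\mF$-continuous, uniformly $C^1$-bounded, vanishes on $\mathcal{K}_\infty$, and still satisfies $\delta V(X_V)<0$ on $\mathcal{K}\setminus\mathcal{K}_\infty$: this is the usual bookkeeping, carried out using $\mF$-continuity of $V\mapsto\delta V(X)$ (for fixed $X$), the distance function $V\mapsto\mF(V,\mathcal{K}_\infty)$, and a locally finite $\mF$-open cover of $\mathcal{K}\setminus\mathcal{K}_\infty$ with a subordinate partition of unity weighted so that $X_V\to 0$ as $V\to\mathcal{K}_\infty$.

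\smallskip\noindent\emph{Homotopy and verification.} Let $\{F^V_t\}$ be the ($G$-equivariant) flow of $X_V$; since $X_V$ is supported in $\interior(M_{a/2})$, this open set is $F^V_t$-invariant and $(t,V,T)\mapsto(F^V_t)_\# T$ is jointly $\mF$-continuous. Then I would pick a continuous $\tilde\tau:\V_n^G(M)\to[0,\infty)$ with $\tilde\tau(V)=0$ whenever $\|V\|(M)\le L-\delta$ — which covers all foliation values of the $\Phi_i^*$, since ${\rm Area}(\partial M)+\delta<L-\delta$ by the definition of $\delta$ — and small enough that $t\mapsto\|(F^V_t)_\# V\|(M)$ is non-increasing on $[0,\tilde\tau(V)]$, and set $\Phi_i(x):=\big(F^{\,|\Phi_i^*(x)|}_{\tilde\tau(|\Phi_i^*(x)|)}\big)_\#\Phi_i^*(x)$. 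Pushforward by a $G$-equivariant diffeomorphism keeps $\Phi_i(x)$ a $G$-cycle, $\Phi_i$ agrees with $\Phi_i^*$ near $x=0$ and outside $\interior(M_{a/2})$, and $\M(\Phi_i(x))\le\M(\Phi_i^*(x))$, so $\{\Phi_i\}\subset\mathcal{P}^G(M,\partial M)$ is again an $\mF$-continuous min-max sequence. Property (iii) is then immediate: $\M(\Phi_i(x))\ge L-\delta$ forces $\M(\Phi_i^*(x))\ge L-\delta$, hence $\spt(\Phi_i^*(x))\subset\subset M_a$ by Proposition~\ref{Prop: deform sweepout for M with boundary}, and flowing inside the invariant set $\interior(M_{a/2})$ keeps the (compact) support there. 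For (i)–(ii): given $V=\lim_j|\Phi_{i_j}(x_{i_j})|\in\mathbf{C}(\{\Phi_i\})$ with $\|V\|(M)=L$, the mass inequality forces $\M(\Phi_{i_j}^*(x_{i_j}))\to L$, so along a subsequence $|\Phi_{i_j}^*(x_{i_j})|\to V^*\in\mathbf{C}(\{\Phi_i^*\})$ with $\spt\|V^*\|\subset M_a$; if $V^*$ were non-stationary the definite mass drop along $F^{V^*}_{\tilde\tau(V^*)}$ would, by $\mF$-continuity of the construction, give $\M(\Phi_{i_j}(x_{i_j}))\le L-c$ for some $c>0$ and large $j$, contradicting $\M(\Phi_{i_j}(x_{i_j}))\to L$; hence $V^*$ is stationary, $F^{V^*}_t\equiv{\rm id}$, and by continuity $V=V^*\in\mathbf{C}(\{\Phi_i^*\})\cap\V_n^G(M_a)$ is stationary in $M$.

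\smallskip\noindent The hard part will be the splicing step: producing a map $V\mapsto X_V$ that is simultaneously $\mF$-continuous, $G$-invariant, compactly supported in $\interior(M_{a/2})$, uniformly $C^1$-bounded, and strictly area-decreasing off $\mathcal{K}_\infty$, together with a choice of $\tilde\tau$ that is continuous, leaves the near-boundary foliation untouched, and keeps the mass monotone along the homotopy. This is the standard pull-tight machinery; the $G$-equivariance and the boundary enter only mildly, handled respectively by \eqref{Eq: average over G}–\eqref{Eq: $G$-stationary} and by Proposition~\ref{Prop: deform sweepout for M with boundary}.
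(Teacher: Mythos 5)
Your proposal is correct and follows essentially the same route as the paper: an equivariant pull-tight using $G$-vector fields compactly supported in $\interior(M_{\frac{a}{2}})$ (obtained via the averaging (\ref{Eq: average over G})--(\ref{Eq: $G$-stationary})), leaving the near-boundary foliation untouched, and combining the resulting mass-drop argument with Proposition \ref{Prop: deform sweepout for M with boundary} to place every critical varifold in $M_a$ and upgrade to stationarity in $M$. The only difference is cosmetic bookkeeping: the paper defines the tightening map on all bounded-mass $G$-varifolds with ``stationary in $\interior(M_{\frac{a}{2}})$'' as the vanishing set, whereas you restrict the vector-field assignment to varifolds supported in $M_a$ and use a mass-threshold cutoff $\tilde\tau$ (which then must be strictly positive on non-stationary large-mass varifolds, as your mass-drop step implicitly requires) -- both are standard implementations of the same machinery.
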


\begin{proof}
	Let $C:=\sup_{i\in \N}\sup_{x\in I}\M(\Phi_i^*(x)) < \infty$ and $\mathring{M}_{\frac{a}{2}} := \interior(M_{\frac{a}{2}})$ be a $G$-invariant open set of $M$. 
	Define then $A :=\{V\in\mathcal{V}^G_n(M) : \|V\|(M)\leq C \}$ and 
	$$ A_0 := \{V\in A : V {\rm ~is ~stationary~in~} \mathring{M}_{\frac{a}{2}} \}.$$
	Since $G$ acts by isometries, $A$ and $A_0$ are compact subset of $\V_n^G(M)$. 
	Additionally, for any $V\in A$, it follows from (\ref{Eq: average over G}) that $V\in A_0$ if and only if $\delta V(X) =0$, $\forall X\in \mathfrak{X}^G(\mathring{M}_{\frac{a}{2}})$. 
	Hence, we can follow \cite[Page 765]{marques2014min} (or \cite[Page 153]{pitts2014existence}) with $\mathfrak{X}^G(\mathring{M}_{\frac{a}{2}})$ in place of $\mathfrak{X}(M)$ to define a continuous map $X: A \to \mathfrak{X}^G(\mathring{M}_{\frac{a}{2}})$ and a continuous function $\eta:A \to[0,1]$ satisfying 
	\begin{itemize}
		\item $X(V)=0$ and $\eta(V)=0$, if $V\in A_0$;
		\item $\delta V(X(V))<0$ and $\eta(V)>0$, if $V\in A\setminus A_0$;
		\item $\|(f^{X(V)}_{t})_\#V\|(M) < \|(f^{X(V)}_{s})_\#V\|(M) $ for all $V\in A$ and $0\leq s<t\leq \eta(V)$, 
	\end{itemize}
	where $\{f^{X(V)}_t\}$ are the equivariant diffeomorphisms generated by $X(V)$. 
	Define then 
	\begin{eqnarray}\label{Eq-pulltightmap}
		H:~I\times \{T\in \Z_n^G(M; \mF; \mZ_2) :  {\bf M}(T)\leq C \} 
		&\rightarrow & \{T\in \Z_n^G(M; \mF; \mZ_2) :  {\bf M}(T)\leq C \} , \nonumber
		\\
		H(t,T) &:=& \left(f^{X(|T|)}_{\eta(|T|)t} \right)_\# T. \nonumber
	\end{eqnarray}
	One easily verifies $H(0,T)=T$ for all $T \in \Z_n^G(M; \mZ_2)$ with ${\bf M}(T)\leq C$, and
	\begin{itemize}
		\item if $|T|$ is stationary in $\mathring{M}_{\frac{a}{2}}$, then $H(t,T)=T$ for all $t\in[0,1]$;
		\item if $|T|$ is not stationary in $\mathring{M}_{\frac{a}{2}}$, then ${\bf M}(H(1,T)) < \M(T)$.
	\end{itemize}
	Define $\Phi_i :=H(1,\Phi_i^*)$. 
	Note $X(V)$ is compact supported in $\mathring{M}_{\frac{a}{2}}$ and $f^{X(V)}_{t}\llcorner (M\setminus \mathring{M}_{\frac{a}{2}}) = id$. Hence, $\Phi_i$ is also a $G$-sweepout of $(M,\partial M)$. 
	Additionally, by the above constructions, one easily verifies that $\{\Phi_i\}_{i\in\N}\subset \mathcal{P}^G(M;\partial M)$ is a min-max sequence continuous in the $\mF$-topology, and ${\bf C}(\{\Phi_i\}_{i\in\N}) \subset {\bf C}(\{\Phi_i^*\}_{i\in\N}) \cap A_0$. 
	Moreover, it follows from Proposition \ref{Prop: deform sweepout for M with boundary} that ${\bf C}(\{\Phi_i\}_{i\in\N})\subset \V^G_n(M_a) \cap A_0$, which implies every $V\in {\bf C}(\{\Phi_i\}_{i\in\N})$ is stationary in $M$. 
	Finally, since the deformations $f^{X(V)}_{t}$ are restricted in $\mathring{M}_{\frac{a}{2}}$, the last bullet follows directly from Proposition \ref{Prop: deform sweepout for M with boundary} and the above constructions. 
\end{proof}

Finally, we can now show the equivariant min-max theorem for compact manifold $M$ with boundary $\partial M$ satisfying (\ref{Eq: assumption on boundary}). 
The proof is generally the approach in \cite[Theorem 3.8]{marques2016morse}, and we list some necessary modifications. 
\begin{theorem}\label{Thm: min-max theorem for M with boundary}
	Suppose $\partial M \neq \emptyset$ satisfies (\ref{Eq: assumption on boundary}), and $3\leq {\rm codim}(G\cdot p)\leq 7$ for all $p\in M$. 
	Then there exists an integral $G$-varifold $V\in\V_n^G(M)$ so that $\|V\|(M) = W^G(M,\partial M)$ and $V=\sum_{i=1}^m n_i|\Sigma_i|$, where $m,n_i\in\N$, $\{\Sigma_i\}_{i=1}^m$ are disjoint smooth embedded closed minimal $G$-hypersurfaces in the interior of $M$.  
\end{theorem}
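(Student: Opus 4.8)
The plan is to reduce the boundary min-max problem to an interior one, after which the equivariant regularity machinery of \cite{wang2022min}\cite{wang2022free}\cite{wang2022equivariant} applies essentially verbatim; the overall scheme follows \cite[Theorem 3.8]{marques2016morse}. First I would take the $\mF$-continuous min-max sequence $\{\Phi_i\}_{i\in\N}\subset\mathcal{P}^G(M,\partial M)$ supplied by Proposition \ref{Prop: tightening}, together with the constant $a>0$ and $\delta=\frac14(W^G(M,\partial M)-{\rm Area}(\partial M))$. The decisive gains are that every $V\in{\bf C}(\{\Phi_i\}_{i\in\N})$ is stationary in $M$ and supported in $M_a$, and that whenever $\M(\Phi_i(x))\geq W^G(M,\partial M)-\delta$ the slice $\Phi_i(x)$ is supported in $\interior(M_{a/2})$. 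In other words, every slice whose mass is close to the width --- and these are the only slices that contribute to the critical set --- lives in a fixed compact $G$-subdomain at positive distance from $\partial M$.

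Next I would fix a smooth compact $G$-invariant domain $K$ (for instance $K=M_{a/4}$, which has smooth boundary for $a$ small) with $M_{a/2}\subset\subset\interior(K)$ and $K\subset\subset\interior(M)$, and apply the Discretization Theorem \ref{Thm: discretization} to each $\Phi_i$, invoking its final statement on each subinterval $\alpha$ along which $\spt(\Phi_i(x))\subset K$. This produces a $(1,\M)$-homotopy sequence of discrete maps $\phi_i:I(1,j_i)_0\to\Z_n^G(M;\mZ_2)$ with $\bL(\{\phi_i\}_{i\in\N})\leq W^G(M,\partial M)$ and with every mass-heavy vertex value supported in $K$. Using the Almgren $G$-extension of Theorem \ref{Thm: interpolation} (cf. Corollary \ref{Cor: discrete and continuous} and Corollary \ref{Cor: F-continuous width}) one checks that $\{\phi_i\}_{i\in\N}$ is a min-max sequence of discrete maps, $\bL(\{\phi_i\}_{i\in\N})=W^G(M,\partial M)$, and its critical set is still contained in $\V_n^G(M_a)$ and consists of stationary varifolds.

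The heart of the proof is then to run Pitts' combinatorial argument in the equivariant setting, exactly as in the closed case \cite{wang2022min}, to produce an element $V\in{\bf C}(\{\phi_i\}_{i\in\N})$ that, besides being stationary in $M$, is $(G,\mZ_2)$-almost minimizing in sufficiently small annuli of boundary type --- in the sense of \cite[Definitions 10, 11]{wang2022min} --- around every point of $\spt\|V\|$. This is where the genuine analytic difficulty lies, and I expect it to be the main obstacle. The one new point to verify is that the boundary $\partial M$ never enters: since $\spt\|V\|\subset M_a\subset\subset\interior(K)$, for each $p\in\spt\|V\|$ the competitor tubes $\an(p,s,t)$ and $B^G_t(p)$ can be chosen with $\overline{B}^G_t(p)\subset\interior(K)$, so all surgeries and comparisons take place away from $\partial M$ and the combinatorial dichotomy is the standard interior one. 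Hence this step reduces to the closed-manifold argument of \cite{wang2022min} carried out inside $K$.

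Finally, given such an almost minimizing $G$-varifold $V$, I would construct $G$-replacements for $V$ in small $G$-tubes as in \cite[Propositions 2, 3]{wang2022min}, verify their consistency via \cite[Proposition 4.19]{wang2022equivariant}, and invoke the equivariant regularity theorem --- available precisely because $3\leq{\rm codim}(G\cdot p)\leq 7$ for all $p\in M$ --- to conclude that $\spt\|V\|$ is a smooth embedded minimal $G$-hypersurface. This yields $V=\sum_{i=1}^m n_i|\Sigma_i|$ with $m,n_i\in\N$ and $\{\Sigma_i\}_{i=1}^m$ disjoint smooth embedded minimal $G$-hypersurfaces, and because $\spt\|V\|\subset M_a\subset\interior(M)$ each $\Sigma_i$ is \emph{closed} and lies in the interior of $M$. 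Since $\{\Phi_i\}_{i\in\N}$ is a min-max sequence, $\|V\|(M)=\bL(\{\Phi_i\}_{i\in\N})=W^G(M,\partial M)$, which completes the proof. (If desired, the parity statement of Theorem \ref{Thm: min-max theorem} --- that $n_i$ is even when $\Sigma_i$ carries no $G$-invariant unit normal --- follows here by the same slicing and \cite[Theorem 1.1]{white2009currents} argument, using that the replacements are of boundary type; it is not needed for the present statement.)
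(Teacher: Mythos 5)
Your overall scheme (tightening via Proposition \ref{Prop: tightening}, discretization with support control, Pitts' combinatorial argument for an almost minimizing element of the critical set, then interior equivariant regularity) is the same as the paper's, but there is a genuine gap at the step you call ``the heart of the proof.'' Pitts' argument is a contradiction argument: assuming no $V\in\mathbf{C}(S)$ is $(G,\mZ_2)$-almost minimizing in annuli, it produces a deformed discrete sequence $S^*$ with ${\bf L}(S^*)<{\bf L}(S)=W^G(M,\partial M)$. In the closed case this immediately contradicts the width because, after interpolation, anything homotopic to a sweepout is again a sweepout. Here, however, $W^G(M,\partial M)$ is an infimum over the restricted class $\mathcal{P}^G(M,\partial M)$ of Definition \ref{Def: sweepout and width in M with boundary}, which is \emph{not} just a homotopy class: its elements must start with a smooth graphical $G$-foliation of a neighborhood of $\partial M$ (condition (ii)) and have supports compactly contained in some $M_\eta$ for $x\geq x_0$ (condition (iii)). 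So to reach a contradiction you must convert $S^*$ back into an admissible element of $\mathcal{P}^G(M,\partial M)$ with maximal mass below $W^G(M,\partial M)$. Saying that ``the boundary never enters'' because $\spt\|V\|\subset M_a$ only guarantees the surgeries happen away from $\partial M$; it does not produce the competitor. This is exactly what the bulk of the paper's proof of Claim 2 does: it arranges two extra properties of the Pitts deformation (the changes $\psi_i(t,x)-\varphi_i\circ{\bf n}_i(x)$ are supported in $M_{a_0}$, and the homotopy is constant at vertices whose mass is below $W^G(M,\partial M)-\delta/4$, obtained by taking $\epsilon_2<\delta/8$ in \cite[Theorem 4.10]{pitts2014existence}), and then performs the Case 1/Case 2 interpolation--gluing that reattaches the deformed part to the \emph{untouched} continuous foliation segment $\Phi_i\llcorner[0,1/3]$, verifying (\ref{Eq: 1})--(\ref{Eq: 3}). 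None of this is addressed in your plan.

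A related structural problem: you discretize the \emph{entire} curve $\Phi_i$, including the boundary-foliation segment. After Almgren interpolation the slices near $x=0$ are no longer graphs $\exp^\perp_{\partial M}(w(x,\cdot)\nu_{\partial M})$, so condition (ii) of Definition \ref{Def: sweepout and width in M with boundary} is destroyed and there is no continuous piece left to glue back; this is why the paper discretizes only $\Phi_i'=\Phi_i\llcorner J$ inside $M_{\eta_i}$ (so that the interpolated part stays in $M_{\eta_i}$, securing condition (iii)) and keeps $\Phi_i\llcorner[0,1/3]$ intact, using the \emph{relative} homotopy of Corollary \ref{Cor: discrete and continuous} to make the gluing well defined. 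Your claim that ${\bf L}(\{\phi_i\})=W^G(M,\partial M)$ ``one checks'' also implicitly relies on being able to recognize the interpolated maps (or suitable gluings) as competitors; in the paper this equality is obtained precisely through the glued sweepouts $\widetilde{\Phi}^i_j$ in (\ref{Eq: width estimate}). Without the gluing construction the contradiction with the definition of $W^G(M,\partial M)$ --- and hence the whole argument --- does not close.
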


\begin{proof}
	Let $a>0$ and $\{\Phi_i\}_{i\in\N}\subset \mathcal{P}^G(M,\partial M)$ be 
	given by Proposition \ref{Prop: tightening} so that every $V\in {\bf C}(\{\Phi_i\}_{i\in\N})$ is stationary in $M$ and compactly supported in $\interior ( M_{a_0})$ for $a_0 = \frac{a}{2}$. 
	Let $\delta = \frac{1}{4}(W^G(M,\partial M) - {\rm Area}(\partial M))>0$. 
	Then by reparametrization, we assume $\Phi_i\llcorner [0, \frac{1}{3} ]$ foliates a neighborhood of $\partial M$ so that 
	\begin{equation}\label{Eq: small area near boundary}
		\M(\Phi_i(x)) \leq {\rm Area}(\partial M) + \delta = W^G(M,\partial M) - 3\delta, \quad \forall x\in [0, 1/3 ].
	\end{equation}
	
	Denote by 
	$$\Phi_i' := \Phi_i \llcorner J.$$
	By Definition \ref{Def: sweepout and width in M with boundary}, there exists $\eta_i \in (0, {a_0})$ so that $\spt(\Phi_i'(x))\subset\subset M_{\eta_i}$ for all $x\in J$. 
	Additionally, since the map $x\mapsto \M(\Phi_i'(x))$ is continuous (by (\ref{Eq: F-metric})), we can take $k_i\in\N$ large enough so that $|\M(\Phi_i'(x)) -\M(\Phi_i'(y))| \leq \delta /4$ for all $x,y$ in a common $1$-cell of $J(1,k_i)$. 
	Denote by $U_i$ the union of $1$-cells $\alpha\in  J(1,k_i)_1$ so that $\M(\Phi_i'(x))\leq W^G(M,\partial M) - 3\delta/4$ for all $x\in \alpha$, and $V_i :=J\setminus U_i$. 
	Therefore, by Proposition \ref{Prop: tightening}(iii), we have 
	$$\M(\Phi_i'(x))\geq W^G(M,\partial M) - \delta, \quad {\rm and }\quad \spt(\Phi_i'(x))\subset M_{a_0}, \quad \forall x\in V_i.$$
		
	By Lemma \ref{Lem: no mass concentration}, we can apply Theorem \ref{Thm: discretization} to each $\Phi_i'$ in the $G$-submanifold $M_{\eta_i}$ and obtain a sequence of maps $\phi_j^i: J(1, k_j^i)_0 \to \Z_n^G(M_{\eta_i}; \mZ_2)$ with $k_j^i<k_{j+1}^i$, $j\in\N$. 
	The last statement in Theorem \ref{Thm: discretization} indicates $\{\phi_j^i\}_{j\in\N}$ can be chosen to satisfy $\spt(\phi_j^i(x)) \subset M_{a_0}$ for all $x\in V_i\cap J(1, k_j^i)_0$. 
	Moreover, we claim the following result:
	\begin{claim}\label{Claim: restrict discrete map in Ma}
		For $j$ large enouth, if $\M(\phi_j^i(x))\geq W^G(M,\partial M) - \delta/2$ then $\spt(\phi^i_j(x))\subset M_{a_0}$.
	\end{claim}
	\begin{proof}[Proof of Claim 1]
		By the continuity of $x\mapsto \M(\Phi_i'(x))$ and Theorem \ref{Thm: discretization}(ii), if $\M(\phi_j^i(x))\geq W^G(M,\partial M)-\delta/2$, then we have $\M(\Phi_i'(x)) > W^G(M,\partial M)- 3\delta/4$ for $j$ large enough. 
		Thus, such vertex $x $ must be in $V_i$, so $\spt(\phi^i_j(x))\subset M_{a_0}$. 
	\end{proof}
	
	Additionally, we also have the following equality due to the lower semi-continuity of mass, the continuity of $x\mapsto \M(\Phi_i'(x))$, and Theorem \ref{Thm: discretization}(ii)(iii):
	\begin{equation}\label{Eq: F estimates}
		\lim_{j\to\infty} \sup \{ \mF(\phi^i_j(x), \Phi_i'(x)) : x\in J(1,k^i_j)_0  \} = 0. 
	\end{equation}
	Let $\Phi^i_j: J\to \Z_n^G(M_{\eta_i};\M;\mZ_2)$ be the Almgren $G$-extension of $\phi^i_j$ given by Theorem \ref{Thm: interpolation} for $j$-large. 
	By Corollary \ref{Cor: discrete and continuous}, $\Phi^i_j$ and $\Phi_i'$ are {\em relative} homotopic in $\Z_n^G(M_{\eta_i};\F;\mZ_2)$. 
	Therefore, 
	$$ \widetilde{\Phi}_i^j(x) := \left\{\begin{array}{ll}{ \Phi_i(x) ,} & {x\in [0,1/3],} \\ {\Phi_i^j(x), }&{x\in J=[1/3, 1],}\end{array}\right.$$
	is a well-defined $\mF$-continuous $G$-sweepout of $(M,\partial M)$ for each $i\in\N$ and $j$-large, and thus
	\begin{eqnarray}\label{Eq: width estimate}
		W^G(M,\partial M) &\leq & {\bf L} (\{\widetilde{\Phi}^{i}_{j} \}_{j\in \N} )= {\bf L} (\{\Phi^{i}_{j} \}_{j\in \N} ) \nonumber
		\\ &=& {\bf L} (\{\phi^i_j\}_{j\in\N}) \leq \sup\{ \M (\Phi_{i}(x) ): x\in I \}\rightarrow W^G(M,\partial M) ,
	\end{eqnarray} 
	by (\ref{Eq: small area near boundary}) and Corollary \ref{Cor: discrete and continuous}. 
	
	Now, we take a subsequence $j(i)\to \infty$ and define $\widetilde{\Phi}_i=\widetilde{\Phi}^i_{j(i)}$, $S=\{\varphi_i \}_{i\in\N}$, $\varphi_i := \phi^i_{j(i)}$, so that $\mf_\M(\varphi_i) \to 0$ and 
	\begin{itemize}
		\item[(1)] $C_i\mf_\M(\varphi_i) \to 0$ as $i\to\infty$, where $C_i=C_0(M_{\eta_i},G,1)$ is given by Theorem \ref{Thm: interpolation};
		\item[(2)] if $\M(\varphi_i(x))\geq W^G(M,\partial M) - \delta/2$ then $\spt(\varphi_i(x))\subset M_{a_0}$ (by Claim \ref{Claim: restrict discrete map in Ma});
		\item[(3)] $W^G(M,\partial M) = {\bf L}(\{\varphi_i\}_{i\in\N}) $ (by (\ref{Eq: width estimate}));
		\item[(4)] $\lim_{i\to\infty} \sup \{ \mF(\varphi_i(x), \Phi_i(x)) : x\in J(1, k^i_{j(i)})_0  \} = 0$ (by (\ref{Eq: F estimates}));
		\item[(5)] $\lim_{i\to\infty} \sup \{ \mF(\Phi_i(x), \Phi_i(y)) : x,y\in \alpha, \alpha \in  I(1, k^i_{j(i)})  \} = 0$ (by the $\mF$-continuity). 
	\end{itemize}
	Combining (3)(4)(5) with (\ref{Eq: small area near boundary}), we have ${\bf C}(S)={\bf C}(\{\Phi_i\}_{i\in\N})\subset \V_n^G(M_{2a_0})$ and every $V\in {\bf C}(S)$ is stationary in $M$. 
	
	\begin{claim}
		There exists $V\in {\bf C}(S)$ that is $(G,\mZ_2)$-almost minimizing in annuli (of boundary type) in the sense of \cite[Definition 11]{wang2022min}. 
	\end{claim}
	\begin{proof}[Proof of Claim 2]
	Suppose none of $V\in {\bf C}(S)$ is $(G,\mZ_2)$-almost minimizing in annuli in the sense of \cite[Definition 11]{wang2022min}. 
	Then there is a new sequence $S^*=\{\varphi_i^*\}_{i\in\N}$ of mappings $\varphi_i^*: J(1, l_i)_{0}\rightarrow \Z_{n}^G(M_{\eta_i} ;\mZ_2)$ for some $l_i\geq k^i_{j(i)}\to\infty$ as $i\to\infty$, such that 
	\begin{itemize}
		\item[(i)] ${\bf L}(S^*)<{\bf L}(S) = W^G(M,\partial M)$; 
		\item[(ii)] $\varphi_i$ and $\varphi_i^*$ are $1$-homotopic in $\Z_{n}^G(M_{\eta_i};\mZ_2)$ with $\M$-finenesses tending to zero, \\
			(Specifically, there is a map $\psi_i: I(1,l_i)_0\times J(1,l_i)_0 \to \Z_{n}^G(M_{\eta_i} ;\mZ_2)$ so that $\mf_\M(\psi_i)\to 0$ as $i\to\infty$, $\psi_i([0],\cdot )=\varphi_i\circ {\bf n}_i$, and $\psi_i([1],\cdot )=\varphi_i^*$, where ${\bf n}_i={\bf n}(l_i, k^i_{j(i)} )$);
		\item[(iii)] $\spt(\psi_i(t,x) - \varphi_i\circ {\bf n}_i (x)) \subset\subset M_{a_0}$, for any $t\in I(1,l_i)_0$ and $x\in J(1, l_i)_{0}$;
		\item[(iv)] for any $x\in J(1, l_i)_{0}$, if $\M(\varphi_i\circ {\bf n}_i(x))<W^G(M,\partial M) - \delta/4$, then $\psi_i(\cdot , x) \equiv \varphi_i\circ {\bf n}_i(x)$ is a constant discrete homotopy at $x$. 
	\end{itemize}
	Indeed, since each $V\in {\bf C}(S)$ is supported in $ M_{2a_0}$, we can take $G$-annuli $\{\an(p(V), r_i- s_i, r_i+s_i)\}_{i=1}^{27}$ in $ M_{a_0}$ as in \cite[Theorem 4.10, Part 1]{pitts2014existence}, which implies all the deformations will be restricted in $M_{a_0}$. 
	Using \cite[Theorem 3.14]{wang2022free} and $\dist_{M}(G\cdot p, \cdot)$, we can make the constructions in \cite[Theorem 4.10, Part 2-9]{pitts2014existence} with $G$-invariant objects. 
	Then the rest parts in \cite[Theorem 4.10]{pitts2014existence} are purely combinatorial which would carry over with $M_{a_0}$ in place of $M$. 
	This gives (i)-(iii). 
	Moreover, by taking the constant $\epsilon_2$ in \cite[Theorem 4.10, Part 3]{pitts2014existence} smaller than $\delta /8$, we have $\psi_i(\cdot, x) \equiv \varphi_i \circ {\bf n}_i(x)$ provided $\M(\varphi_i \circ {\bf n}_i(x)) < W^G(M,\partial M) - \delta/4$ (c.f. Part 10(c), Part 14 and 18 in \cite[Theorem 4.10]{pitts2014existence}). 
	
	Next, we can extend $\varphi_i^*$ (for $i$-large) to an $\mF$-continuous map $\widetilde{\Phi}_i^* \in \mathcal{P}^G(M,\partial M)$ so that $\widetilde{\Phi}_i^*\llcorner [0,1/3] = \widetilde{\Phi}_i\llcorner[0,1/3] = \Phi_i\llcorner [0, 1/3]$. 
	Indeed, take any $1$-cell $\alpha = [x_0, x_1]\in J(1, l_i)_1$, we will construct the extension $\widetilde{\Phi}_i^*\llcorner\alpha $ separately in two cases. 
	
	{\bf Case 1: $\max\{\M(\varphi_i\circ {\bf n}_i(x_0)), \M(\varphi_i\circ {\bf n}_i(x_1))\} < W^G(M,\partial M) - \delta /4$.}
	
	By (ii)(iv), we can define $\widetilde{\Phi}_i^* \llcorner \alpha := \widetilde{\Phi}_i\circ f_\alpha$ as the extension of $\varphi_i^*\llcorner\alpha_0$, where $f_\alpha: \alpha= [x_0,x_1]\to [{\bf n}_i(x_0), {\bf n}_i(x_1)]$ is an affine transformation. 
	Hence, in this case, we have 
	\begin{equation}\label{Eq: 1}
		\widetilde{\Phi}_i^*\llcorner\alpha \subset \Z_n^G(M_{\eta_i};\mZ_2),    \quad{\rm and}\quad\widetilde{\Phi}_i^*(x) = \widetilde{\Phi}_i({\bf n}_i(x)) ~ \forall x\in\alpha_0=\{x_0,x_1\}.
	\end{equation}
	In particular, $\widetilde{\Phi}_i^*(1) = \widetilde{\Phi}_i(1)=0$ provided $\mf_\M(\psi_i)< W^G(M,\partial M) - \delta /4$, which holds for $i$-large. 
	Additionally, it follows from (1), Theorem \ref{Thm: interpolation}(i)(iii), and the choice of $\alpha$ that
	\begin{eqnarray}\label{Eq: 1.1}
		\sup\{\M(\widetilde{\Phi}_i^*(x)) : x\in\alpha \} &= & \sup\{\M(\widetilde{\Phi}_i(x)) : x\in f_\alpha(\alpha)\} \nonumber
		\\ &\leq & \sup\{\M(\varphi_i(x)) : x\in \partial f_\alpha(\alpha)\} + C_i\mf_{\M}(\varphi_i)
		\\ &< & W^G(M,\partial M) - \delta /4 + C_i\mf_{\M}(\varphi_i) \nonumber
		\\ &\leq & W^G(M,\partial M) - \delta /5, \quad \mbox{for $i$-large},\nonumber
	\end{eqnarray}
	where $C_i=C_0(M_{\eta_i},G,1)$ is given by Theorem \ref{Thm: interpolation}.

	{\bf Case 2: $\max\{\M(\varphi_i\circ {\bf n}_i(x_0)), \M(\varphi_i\circ {\bf n}_i(x_1))\} \geq W^G(M,\partial M) - \delta /4$.}
	 
	Denote by $A_i\subset J$ the union of all $1$-cells of this case in $J(1, l_i)_1$. 
	Take $i$ sufficiently large so that $\mf_\M(\psi_i)<\delta /4$ (by (ii)). 
	Then $\M(\varphi_i\circ {\bf n}_i(x)) \geq W^G(M,\partial M) - \delta /2$ for all $x\in J(1, l_i)_0\cap A_i$. 
	By (2) and (iii), we have $\varphi_i^*(x)=\psi_i^*([1],x)$ is supported in $ M_{a_0}$ for all $x\in J(1, l_i)_0\cap A_i$. 
	Applying Theorem \ref{Thm: interpolation} to $\varphi_i^*\llcorner [J(1, l_i)_0\cap A_i]$ in $M_{a_0}$ (for $i$-large) will give an $\M$-continuous extension $\widetilde{\Phi}_i^* : A_i\to \Z_n^G(M_{a_0};\mZ_2)$ so that 
	\begin{equation}\label{Eq: 2}
		\sup\{\M(\widetilde{\Phi}_i^*(x)) : x\in A_i\} \leq \sup\{\M(\varphi_i^*(x)): x\in J(1, l_i)_0\cap A_i\} + C_0\mf_{\M}(\psi_i) ,
	\end{equation}
	where $C_0=C_0(M_{a_0},G,1)\geq 1$ is a uniform constant. 
	Note for any $x\in \partial A_i$, we must have $\M(\varphi_i\circ {\bf n}_i(x))< W^G(M,\partial M) - \delta /4$. 
	Hence, by (iv) and Theorem \ref{Thm: interpolation}(i), 
	\begin{equation}\label{Eq: 3}
		 \widetilde{\Phi}_i^*(x) = \varphi_i^*(x)= \varphi_i\circ {\bf n}_i(x)= \widetilde{\Phi}_i({\bf n}_i(x)) \quad \forall x\in \partial A_i . 
	\end{equation}
	
	It now follows from (\ref{Eq: 1})(\ref{Eq: 3}) that $ \widetilde{\Phi}_i^*: I\to \Z_n^G(M;\mZ_2)$ is a well-defined $\mF$-continuous map so that $\widetilde{\Phi}_i^*\llcorner [0,1/3] = \widetilde{\Phi}_i\llcorner[0,1/3] = \Phi_i\llcorner [0, 1/3]$, $\widetilde{\Phi}_i^*(1)=0$, and $\widetilde{\Phi}_i^*\llcorner J\subset \Z_n^G(M_{\eta_i};\mZ_2) $, which implies $\widetilde{\Phi}_i^*\in \mathcal{P}^G(M,\partial M)$. 
	Therefore, by (\ref{Eq: small area near boundary})(\ref{Eq: 1.1})(\ref{Eq: 2})(i)(ii), 
	$$W^G(M,\partial M)\leq {\bf L}(\{ \widetilde{\Phi}_i^*\}_{i\in\N}) \leq \max\{W^G(M,\partial M) - \delta /5,  ~{\bf L}(\{\varphi_i^* \}_{i\in\N}) \} < W^G(M,\partial M),$$
	which is a contradiction. 
	\end{proof}
	
	Thus, there must exist $V\in {\bf C}(S)$ that is $(G,\mZ_2)$-almost minimizing in annuli (of boundary type) in the sense of \cite[Definition 11]{wang2022min}. 
	Since ${\bf C}(S)\subset \V_n^G(M_{2a_0})$, the interior regularity result \cite[Theorem 7]{wang2022min} (modified in \cite[Theorem 4.18]{wang2022equivariant}) indicates that $V$ is an integral $G$-varifold induced by closed smooth embedded minimal $G$-hypersurfaces.
\end{proof}

\section{$G$-sweepouts in positive Ricci curvature $G$-manifolds}\label{Sec: sweepouts}

In this section, we always assume $(M^{n+1}, g_{_M})$ is a closed connected orientable Riemannian manifold with positive Ricci curvature $\Ric_M>0$, and $G$ is a compact Lie group acting on $M$ isometrically so that $3\leq {\rm codim}(G\cdot p)\leq 7$ for all $p\in M$. 
The goal of us is to associate an $\mF$-continuous $G$-sweepout to each closed minimal $G$-hypersurface in $M$.

To begin with, we collect some notations and classical results for minimal hypersurfaces. 
Let $\Sigma\subset M$ be a closed smooth embedded minimal hypersurface. 
Recall the second variation of $\Sigma$ for the area functional is given by 
\begin{eqnarray}\label{Eq: first and second variation formula}
	\delta^2\Sigma(X) := \frac{d^2}{d^2t}\Big|_{t=0}{\rm Area}(F_t(\Sigma)) = - \int_\Sigma \langle L_\Sigma(X^\perp ), X^\perp \rangle ,
\end{eqnarray}
where $L_\Sigma: \mathfrak{X}^\perp(\Sigma)\to \mathfrak{X}^\perp(\Sigma)$ is the Jacobi operator of $\Sigma$, and $\{F_t\}$ are diffeomorphisms generated by $X\in \mathfrak{X}(M)$. 
Then we denote by
\begin{itemize}
	\item ${\rm Index}(\Sigma)$: the {\em Morse index} of $\Sigma$, i.e. the number of the negative eigenvalues (counted with multiplicities) of $L_\Sigma$;
	\item $ \mu_1(\Sigma) $: the first eigenvalue of $L_\Sigma$.
\end{itemize}
If ${\rm Index}(\Sigma)=0 $ or equivalently $\mu_1(\Sigma)\geq 0$, then we say $\Sigma$ is {\em stable}. 

For the case that $\Sigma\subset M$ is a $G$-invariant minimal hypersurface, we have $L_\Sigma(X)\in  \mathfrak{X}^{\perp,G}(\Sigma)$ for all $X\in \mathfrak{X}^{\perp,G}(\Sigma)$, where $ \mathfrak{X}^{\perp,G}(\Sigma)$ is the space of normal $G$-vector fields on $\Sigma$. 
By restricting $L_\Sigma$ to $\mathfrak{X}^{\perp,G}(\Sigma)$, we make the following definition:
\begin{definition}\label{Def: index and G-index}
	Let $\Sigma\subset M$ be a closed smooth embedded minimal $G$-hypersurface. 
	The {\em equivariant Morse index} (or {\em $G$-index} for simplicity) ${\rm Index}_G(\Sigma)$ is defined by the number of the negative eigenvalues (counted with multiplicities) of $L_\Sigma\llcorner \mathfrak{X}^{\perp,G}(\Sigma)$. 
	Additionally, we denote $\mu_1^G(\Sigma)$ as the first eigenvalue of $L_\Sigma\llcorner \mathfrak{X}^{\perp,G}(\Sigma)$. 
\end{definition}

Suppose $\Sigma$ is a closed minimal $G$-hypersurface with a $G$-invariant unit normal $\nu$, and $u_1$ is the first eigenfunction of $L_\Sigma$. 
Then for any $g\in G$, the $G$-invariance of $\Sigma$ and $\nu$ indicates $u_1\circ g$ is also the first eigenfunction of $L_\Sigma$. 
It is well-known that $ \mu_1(\Sigma) $ has multiplicity one and the first eigenfunction $u_1$ does not change sign. 
Hence, $u_1\circ g = u_1$ for all $g\in G$, which suggests $u_1\nu\in\mathfrak{X}^{\perp,G}(\Sigma)$ and the following lemma: 
\begin{lemma}\label{Lem: first eigenfunction}
	If $\Sigma$ is a closed minimal $G$-hypersurface with a $G$-invariant unit normal $\nu$. 
	Then the first eigenfunction $u_1>0$ of $L_\Sigma$ is $G$-invariant and $\mu_1(\Sigma) = \mu_1^G(\Sigma)$. 
\end{lemma}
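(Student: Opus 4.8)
The plan is to flesh out the one-line argument indicated in the paragraph just before the statement. Throughout, identify the space $\mathfrak{X}^\perp(\Sigma)$ of normal fields with $C^\infty(\Sigma)$ via the fixed $G$-invariant unit normal $\nu$, so that $L_\Sigma$ becomes a scalar second-order self-adjoint operator on functions (the usual Jacobi operator built from $\Delta_\Sigma$, $|A_\Sigma|^2$ and $\Ric_M(\nu,\nu)$), and recall the two classical inputs for a connected closed minimal hypersurface: its lowest eigenvalue $\mu_1(\Sigma)$ is simple, and every first eigenfunction has a sign. Fix the first eigenfunction $u_1$ with $u_1>0$ and $\int_\Sigma u_1^2 = 1$. (If $\Sigma$ is not connected one first reduces to a component; this is the only place connectedness enters, and in all of our applications $\Sigma$ will be connected.)

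First I would show that $u_1$ is $G$-invariant. For $g\in G$, the isometry $g$ restricts to an isometry of $\Sigma$, and the hypothesis $g_*\nu = \nu$ says precisely that the trivialization $C^\infty(\Sigma)\cong\mathfrak{X}^\perp(\Sigma)$ is $G$-equivariant; since $g$ preserves the induced metric, the second fundamental form and $\Ric_M(\nu,\nu)$, the scalar operator $L_\Sigma$ commutes with $u\mapsto u\circ g$. Hence $u_1\circ g$ is again a first eigenfunction with $u_1\circ g>0$ and unit $L^2$-norm, so simplicity of $\mu_1(\Sigma)$ forces $u_1\circ g = u_1$. As $g$ was arbitrary, $u_1$ is $G$-invariant, i.e. $u_1\nu\in\mathfrak{X}^{\perp,G}(\Sigma)$.

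Then I would compare the two eigenvalues through their variational characterizations. By Definition \ref{Def: index and G-index}, $\mu_1^G(\Sigma)$ is the bottom of the spectrum of $L_\Sigma$ restricted to the closed invariant subspace $\mathfrak{X}^{\perp,G}(\Sigma)\subset\mathfrak{X}^\perp(\Sigma)$ (the invariance $L_\Sigma\mathfrak{X}^{\perp,G}(\Sigma)\subset\mathfrak{X}^{\perp,G}(\Sigma)$ being recorded in the excerpt). Since the infimum of the Rayleigh quotient over a smaller space is no smaller than its infimum over the larger one, $\mu_1^G(\Sigma)\ge\mu_1(\Sigma)$ comes for free. Conversely, $u_1\nu$ is an admissible element of $\mathfrak{X}^{\perp,G}(\Sigma)$ whose Rayleigh quotient equals $\mu_1(\Sigma)$, so $\mu_1^G(\Sigma)\le\mu_1(\Sigma)$ as well. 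Combining the two inequalities gives $\mu_1(\Sigma)=\mu_1^G(\Sigma)$, which completes the proof.

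The only step I expect to be subtle is the assertion that $u_1\circ g$ is literally a \emph{first} eigenfunction — with the same eigenvalue and the same sign — rather than, say, $-u_1$: this is exactly where one must use that $\nu$ (and not merely $\Sigma$) is $G$-invariant, so that $L_\Sigma$ as a scalar operator is genuinely $G$-equivariant. It is also the reason the conclusion can fail for a $G$-invariant minimal hypersurface lacking a $G$-invariant unit normal, as in the ellipsoid example of the introduction. Everything else is routine bookkeeping with the spectral theory of the Jacobi operator.
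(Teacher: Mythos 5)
Your proposal is correct and follows essentially the same route as the paper: the paragraph preceding the lemma is exactly this argument (equivariance of $L_\Sigma$ via the $G$-invariant $\nu$, simplicity and positivity of the first eigenfunction forcing $u_1\circ g=u_1$, hence $u_1\nu\in\mathfrak{X}^{\perp,G}(\Sigma)$ and $\mu_1(\Sigma)=\mu_1^G(\Sigma)$ by the Rayleigh-quotient comparison). Your added remarks on connectedness and on where $g_*\nu=\nu$ is genuinely used are accurate refinements of the same proof, not a different approach.
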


Since we mainly consider the ambient manifolds with positive Ricci curvature, we collect the following useful results which are well-known to experts (c.f. \cite[Section 2]{zhou2017min}). 
\begin{lemma}\label{Lem: hypersurface in positive Ricci manifold}
	Suppose $(M^{n+1}, g_{_M})$ is a closed connected orientable Riemannian manifold. 
	Let $\Sigma, \Sigma_1,\Sigma_2\subset M$ be closed embedded hypersurfaces. Then we have
	\begin{itemize}
		\item[(i)] if $\Sigma$ is connected, then $\Sigma$ is orientable if and only if it is $2$-sided (i.e. $\Sigma$ has a unit normal vector field); 
		\item[(ii)] if $\Sigma$ is connected and separates $M$, i.e. $M\setminus \Sigma$ has two connected components, then $\Sigma$ is orientable. 
	\end{itemize}
	Moreover, suppose $M$ has positive Ricci curvature. Then we have 
	\begin{itemize} 
		\item[(iii)] if $\Sigma$ is connected and orientable, then $\Sigma$ separates $M$; 
		\item[(iv)] if $\Sigma$ is minimal and $2$-sided, then it can not be stable, i.e. $\mu_1(\Sigma)<0$; 
		\item[(v)] if $\Sigma_1,\Sigma_2$ are minimal hypersurfaces, then $\Sigma_1\cap\Sigma_2\neq\emptyset$;
	\end{itemize}
\end{lemma}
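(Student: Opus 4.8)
The plan is to prove the five statements of Lemma \ref{Lem: hypersurface in positive Ricci manifold} using standard algebraic-topological and geometric facts. For (i), I would recall that a connected hypersurface $\Sigma$ in an orientable manifold $M$ is two-sided precisely when its normal bundle is trivial; since the normal bundle is a real line bundle, this is equivalent to its first Stiefel--Whitney class vanishing, and because $TM|_\Sigma = T\Sigma \oplus \mathbf{N}\Sigma$ with $TM$ orientable, $w_1(\mathbf{N}\Sigma) = w_1(T\Sigma)$, so two-sidedness is equivalent to orientability of $\Sigma$. For (ii), if $M \setminus \Sigma$ has two components $U_1, U_2$, then $\Sigma$ bounds (say $\Sigma = \partial \overline{U_1}$ as a $\mZ_2$-chain, and one checks this lifts to an integral relation using the two components), so the normal bundle is trivial — one picks the normal pointing into $U_1$ — hence $\Sigma$ is two-sided, and by (i) orientable. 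Alternatively, a Mayer--Vietoris argument on $M = \overline{U_1} \cup \overline{U_2}$ glued along $\Sigma$ shows $\Sigma$ is null-homologous in a way forcing triviality of the normal bundle.

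For (iii), the positive Ricci curvature enters: if $\Sigma$ is connected, orientable, but does not separate, then $M \setminus \Sigma$ is connected, so there is a loop $\gamma$ in $M$ meeting $\Sigma$ transversally in exactly one point. This means $\Sigma$ is homologically nontrivial, $[\Sigma] \neq 0$ in $H_n(M;\mZ_2)$, and in fact the intersection pairing with $[\gamma]$ is odd. Then I would invoke the standard fact (a consequence of the Gauss--Bonnet/Bochner technique, or more directly of Frankel-type arguments combined with the fact that positive Ricci curvature forces $H_n(M;\R)$ restrictions — actually the cleanest route) that under $\Ric_M > 0$ one can minimize area in the homology class of $\Sigma$ to obtain a stable minimal hypersurface, contradicting (iv); or more simply, note that a non-separating orientable hypersurface gives a degree-one map argument. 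The most efficient argument: the area-minimizer in the (nontrivial, by the transversal loop) $\mZ_2$-homology class exists and is a smooth stable minimal hypersurface when $n \le 6$; but item (iv) says no two-sided minimal hypersurface is stable under $\Ric > 0$. So either I argue directly that non-separating forces the existence of such a stable minimizer, or — the textbook approach in \cite{zhou2017min} — I use that a connected orientable non-separating $\Sigma$ would make $M \setminus \Sigma$ connected, and then a neighborhood-doubling plus the first-eigenvalue test function shows instability is impossible to avoid. I would follow the cited reference's formulation.

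For (iv), this is the classical Ricci-instability computation: given a two-sided minimal $\Sigma$ with unit normal $\nu$, test the second variation with the constant function $\varphi \equiv 1$. The stability operator gives $\delta^2\Sigma(\varphi) = \int_\Sigma (|\nabla \varphi|^2 - (|A|^2 + \Ric_M(\nu,\nu))\varphi^2) = -\int_\Sigma (|A|^2 + \Ric_M(\nu,\nu)) < 0$ since $\Ric_M > 0$; hence $\mu_1(\Sigma) < 0$. For (v), this is Frankel's theorem: if $\Sigma_1 \cap \Sigma_2 = \emptyset$, take a shortest geodesic $\gamma$ from $\Sigma_1$ to $\Sigma_2$ realizing $\dist(\Sigma_1,\Sigma_2) > 0$; the second variation of arc length along $\gamma$, using a parallel vector field, together with minimality of each $\Sigma_i$ (zero mean curvature) and $\Ric_M > 0$, yields that the length can be decreased — contradicting minimality of $\gamma$.

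The main obstacle is item (iii): unlike the purely topological (i)--(ii) and the one-line computations (iv)--(v), (iii) genuinely couples topology with curvature, and the cleanest proof routes through the regularity theory for area-minimizers in a homology class (valid since $n+1 \le 7$ is in force throughout this section, so minimizers are smooth) plus item (iv) to rule out the resulting stable minimal hypersurface; care is needed to confirm that a connected orientable non-separating hypersurface does carry a nontrivial $\mZ_2$-homology class whose minimizer is itself two-sided. I would simply cite \cite[Section 2]{zhou2017min} for the precise statement and reproduce only the short contradiction argument, since the lemma is explicitly flagged as "well-known to experts."
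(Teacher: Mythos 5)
The paper itself does not prove this lemma; it simply cites \cite[Section 2]{zhou2017min}, so the only substantive comparison is with the standard arguments you reconstruct. Your treatments of (i), (ii), (iv) and (v) are correct and are exactly the expected ones: the Stiefel--Whitney identity $w_1(\mathbf{N}\Sigma)=w_1(T\Sigma)$ for (i), choosing the normal pointing into one component for (ii), the constant test function $\varphi\equiv 1$ in the stability inequality for (iv), and Frankel's second-variation argument along a minimizing geodesic for (v).

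The gap is in (iii), precisely at the point you flag but do not resolve. Your ``most efficient argument'' --- minimize area in the nontrivial $\mZ_2$-homology class of a non-separating $\Sigma$ and contradict (iv) --- does not go through as stated: a $\mZ_2$-area-minimizer need not be two-sided, and (iv) only excludes stable \emph{two-sided} minimal hypersurfaces. Indeed $\mathbb{RP}^2\subset\mathbb{RP}^3$ (round) is a one-sided, area-minimizing, hence stable, minimal hypersurface in a manifold of positive Ricci curvature, so the contradiction you want simply is not there without further work. There is also a dimension issue: the lemma carries no restriction on $n+1$, and the paper's standing hypothesis $3\leq\operatorname{codim}(G\cdot p)\leq 7$ does not force $n+1\leq 7$, so smooth regularity of the minimizer is not automatic either. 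The correct (and much shorter) argument, which is the one in the cited reference, is purely topological: if the connected orientable $\Sigma$ did not separate $M$, there would be a closed curve $\gamma$ meeting $\Sigma$ transversally in exactly one point; since both $M$ and $\Sigma$ are orientable, the algebraic intersection number with $[\Sigma]\in H_n(M;\mathbb{Z})$ defines a homomorphism $H_1(M;\mathbb{Z})\to\mathbb{Z}$ sending $[\gamma]$ to $\pm 1$, so $[\gamma]$ has infinite order in $H_1(M;\mathbb{Z})$; but $\Ric_M>0$ and Bonnet--Myers give a finite fundamental group, hence finite $H_1(M;\mathbb{Z})$ --- a contradiction. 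Replacing your minimization route by this argument closes the gap and removes any appeal to geometric measure theory or to (iv).
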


After involving the actions of $G$, a connected component of some $G$-hypersurface $\Sigma$ may not be $G$-invariant. 
Hence, we introduce the following notions of equivariant connectivity. 
\begin{definition}\label{Def: G-connected}
	Let $U\subset M$ be a $G$-invariant subset with connected components $\{U_i\}_{i=1}^m$. 
	Then we say $U$ is {\em $G$-connected} if for any $i,j\in\{1,\cdots,m\}$, there exists $g_{ij}\in G$ so that $g_{ij}\cdot U_j = U_i$. 
	Additionally, we say $U' \subset U$ is a {\em $G$-connected component} (or {\em $G$-component} for simplicity) of $U$, if $U' $ has the form of $\cup_{j=1}^l U_{i(j)}$ and is $G$-connected. 
\end{definition}

Note any $G$-subset $U$ of $M$ can be separated into some $G$-components. 
Additionally, by the above proposition, it is easy to show the following results:
\begin{lemma}\label{Lem: G-hypersurface in positive Ricci manifold}
	Suppose $(M^{n+1}, g_{_M})$ is a closed connected orientable Riemannian manifold with positive Ricci curvature, and $G$ is a compact Lie group acting on $M$ isometrically. 
	Let $\Sigma\subset M$ be a closed embedded minimal $G$-hypersurface. 
	Then $\Sigma$ is connected and 
	\begin{itemize}
		\item if $\Sigma$ has a $G$-invariant unit normal, then $\Sigma$ separates $M$ into two $G$-components. 
		\item if $\Sigma$ does not admit a $G$-invariant unit normal, then $M\setminus\Sigma$ is $G$-connected. 
	\end{itemize}
\end{lemma}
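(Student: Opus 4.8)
The plan is to reduce both statements to Lemma \ref{Lem: hypersurface in positive Ricci manifold} by a careful analysis of connected components and their $G$-orbits. First I would prove connectedness of $\Sigma$. Write $\Sigma = \bigsqcup_{i=1}^m \Sigma_i$ as a union of connected components. Each $\Sigma_i$ is a closed embedded minimal hypersurface in $M$, so by Lemma \ref{Lem: hypersurface in positive Ricci manifold}(v) any two of them must intersect; since the $\Sigma_i$ are the distinct connected components of an embedded hypersurface, they are pairwise disjoint, forcing $m=1$. Thus $\Sigma$ is connected. (Note this already means the notion of ``$G$-component'' for $\Sigma$ itself is trivial, but it will matter for $M\setminus\Sigma$.)

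Next, for the first bullet, suppose $\Sigma$ has a $G$-invariant unit normal $\nu$. By Lemma \ref{Lem: hypersurface in positive Ricci manifold}(i) $\Sigma$ is two-sided hence orientable, and by (iii) it separates $M$ into two connected components, say $M\setminus\Sigma = \Omega^+ \sqcup \Omega^-$, where $\Omega^\pm$ is the side into which $\pm\nu$ points. Because $\nu$ is $G$-invariant and $G$ acts by isometries preserving $\Sigma$, each $g\in G$ maps the ``$+\nu$ side'' to the ``$+\nu$ side'' (the pushforward $g_*\nu$ is again the chosen $G$-invariant normal, so it points into $g(\Omega^+)$, which must therefore be $\Omega^+$). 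Hence $g\cdot\Omega^\pm = \Omega^\pm$ for all $g$, so $\Omega^+$ and $\Omega^-$ are each $G$-invariant and connected, i.e. $\Sigma$ separates $M$ into exactly two $G$-components.

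For the second bullet, suppose $\Sigma$ admits no $G$-invariant unit normal. I claim $\Sigma$ is nonetheless two-sided: since $\Sigma$ is connected, by Lemma \ref{Lem: hypersurface in positive Ricci manifold}(i) two-sidedness is equivalent to orientability, and if $\Sigma$ were one-sided then $M\setminus\Sigma$ would be connected and we would be done trivially; so assume $\Sigma$ is two-sided, with the two (non-$G$-invariant) choices of unit normal $\pm\nu$. As before $M\setminus\Sigma=\Omega^+\sqcup\Omega^-$ with $\Omega^\pm$ the side of $\pm\nu$. Since $\nu$ is not $G$-invariant, there exists $g_0\in G$ with $(g_0)_*\nu = -\nu$ (the pushforward of a unit normal of a $G$-invariant hypersurface is again a unit normal, hence $\pm\nu$; the map $g\mapsto \epsilon(g)\in\{\pm1\}$ with $g_*\nu=\epsilon(g)\nu$ is a homomorphism $G\to\mathbb{Z}_2$, and non-$G$-invariance of $\nu$ means it is not identically $1$). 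Then $g_0\cdot\Omega^+ = \Omega^-$, so the two components $\Omega^+,\Omega^-$ of $M\setminus\Sigma$ are swapped by $g_0$; in particular $M\setminus\Sigma$ is $G$-connected in the sense of Definition \ref{Def: G-connected}.

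The main obstacle, and the step requiring the most care, is justifying that the pushforward of a chosen unit normal along $g\in G$ is again a \emph{globally defined} unit normal of $\Sigma$ — equivalently, that $g\mapsto\epsilon(g)$ with $g_*\nu = \epsilon(g)\nu$ is well-defined and a homomorphism. This uses connectedness of $\Sigma$ (so that the sign cannot vary over $\Sigma$) together with continuity of the $G$-action, after which everything else is a clean bookkeeping argument. One should also double-check the degenerate possibility that $\Sigma$ is one-sided, handled above by observing that $M\setminus\Sigma$ is then automatically connected, hence trivially $G$-connected, so the conclusion of the second bullet still holds.
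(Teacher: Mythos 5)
Your proposal is correct and follows essentially the same route as the paper: connectedness via Lemma \ref{Lem: hypersurface in positive Ricci manifold}(v), the separating case via (i) and (iii) with $G$-invariance of $\nu$ fixing each side, and in the non-invariant case either a one-sided $\Sigma$ (complement already connected) or an element $g_0$ with $(g_0)_*\nu=-\nu$ swapping the two sides. Your extra remarks on the well-definedness of the sign $\epsilon(g)$ (using connectedness of $\Sigma$) make explicit what the paper leaves implicit, but the argument is the same.
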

\begin{proof}
	It follows from Lemma \ref{Lem: hypersurface in positive Ricci manifold}(v) that $\Sigma$ is connected. 
	If $\Sigma$ has a $G$-invariant unit normal $\nu$, then by Lemma \ref{Lem: hypersurface in positive Ricci manifold}(i)(iii), $M\setminus \Sigma$ has two connected components $M_1,M_2$, with $\nu$ pointing inward $M_1$. 
	Since $\nu$ and $M_1\cup M_2$ are $G$-invariant, we have $g_*\nu = \nu$ and $g\cdot M_i = M_i$ for all $g\in G$ and $i\in\{1,2\}$, which indicates each $M_i$ is $G$-connected. 
	If the unit normal $\nu$ exists but is not $G$-invariant, then there exists $g\in G$ so that $g_*\nu = -\nu$ pointing inward $M_2$, which implies $g\cdot M_1 = M_2$, and thus $M_1\cup M_2$ is $G$-connected. 
	If $\Sigma$ does not admit a unit normal, then $M\setminus \Sigma$ has only one component which is also $G$-connected. 
\end{proof}

Recall that, in \cite{zhou2015min}, Zhou constructed sweepouts of $M$ by separating orientable and non-orientable minimal hypersurfaces. 
It follows from \cite[Lemma 3.3]{zhou2015min} that the orientability of a connected closed hypersurface is equivalent to the non-connectivity of its unit normal bundle. 
Hence, after involving the actions of $G$, we shall separate the constructions by the $G$-connectivity (Definition \ref{Def: G-connected}) of the unit normal bundle for minimal $G$-hypersurfaces. 

Therefore, we denote 
\begin{equation}\label{Eq: minimal hypersurfaces set}
	\mS^G(M) := \left\{\begin{array}{l|l} \Sigma^n\subset M^{n+1}  & \begin{array}{l} \mbox{$\Sigma$ is a closed smooth embedded} \\ \mbox{minimal $G$-hypersurface in $(M, g_{_M})$} \end{array}\end{array}\right\}. 
\end{equation}
By Theorem \ref{Thm: min-max theorem}, $\mS^G(M)\neq \emptyset$ provided $3\leq {\rm codim}(G\cdot p)\leq 7, \forall p\in M$. 
Define then 
\begin{eqnarray}
	\mS^G_+(M) := \{\Sigma\in \mS^G(M): \mbox{$\Sigma$ has a $G$-invariant unit normal} \}, \nonumber
\end{eqnarray}
and $\mS^G_-(M) := \mS^G(M)\setminus \mS^G_+(M)$. 
It follows directly from Lemma \ref{Lem: G-hypersurface in positive Ricci manifold} that 
$$\Sigma\in \mS^G_-(M) ~\Leftrightarrow ~\mbox{$S\Sigma$ is $G$-connected} ~\Leftrightarrow ~ \mbox{$M\setminus\Sigma$ is $G$-connected} , $$
where $S\Sigma = \{v\in{\bf N}\Sigma : |v|=1 \}$ is the unit normal bundle of $\Sigma$.  

Moreover, for any $\Sigma\in\mS^G_-(M)$, we can cut $M$ along $\Sigma$ to obtain a new manifold $\widetilde{M}$ so that $\widetilde{M}$ is locally isometric to $M$, $G$ acts on $\widetilde{M}$ by isometries, and $\partial\widetilde{M}\in \mS^G_+(\widetilde{M} )$ is a $G$-invariant double cover of $\Sigma$. 
Specifically, let $r >0$ be small enough so that the normal exponential map $\exp^\perp_{\Sigma} : {\bf N}\Sigma \to M$ is a $G$-equivariant diffeomorphism on $B_{2r}(\Sigma):=\{p\in M: \dist_M(\Sigma,p )<2r\}$. 
Hence, we have 
\begin{equation}\label{Eq: double cover map}
	E: S\Sigma \times (-2r, 2r) \to B_r(\Sigma ), \qquad E(v,t) := \exp^\perp_\Sigma(t\cdot v)
\end{equation}
is a double cover of $B_{2r}(\Sigma)$. 
Define the action of $G$ on $S\Sigma \times (-2r, 2r)$ by $g\cdot (v,t):= (g_*v,t)$ for any $v\in S\Sigma$ and $t\in (-2r,2r)$, which indicates $E$ is $G$-equivariant and 
\begin{equation}\label{Eq: double cover Sigma}
	\widetilde{\Sigma} = S\Sigma \times \{0\} 
\end{equation} 
is a $G$-equivariant double cover of $\Sigma$. 
Let $E_r := E\llcorner (S\Sigma \times (r, 2r))$ be a $G$-equivariant diffeomorphism on $B_{2r}(\Sigma)\setminus\Clos(B_r(\Sigma))$. 
Then by gluing $M\setminus \Clos(B_r(\Sigma )$ and $S\Sigma \times [0, 2r)$ on $B_{2r}(\Sigma)\setminus\Clos(B_r(\Sigma))$ with $E_r$, we can define 
\begin{equation}\label{Eq: open up}
	\widetilde{M} := \big(M\setminus \Clos(B_r(\Sigma )) \big)  \cup_{E_r}  \big(S\Sigma \times [0, 2r)\big), 
\end{equation}
as a compact manifold with boundary $\partial \widetilde{M} = \widetilde{\Sigma}$. 
Then we have 
\begin{equation}\label{Eq: reduce to M}
	F: \widetilde{M} \to M, \qquad
	F := \left\{\begin{array}{ll}{ id ,} & {{\rm in~} M\setminus \Clos(B_r(\Sigma )),} \\ {E, }&{{\rm in~} S\Sigma \times [0, 2r),}\end{array}\right.
\end{equation}
is a $G$-equivariant smooth map so that $F\llcorner  (\widetilde{M}\setminus\widetilde{\Sigma})$ gives a diffeomorphism to $M\setminus \Sigma$, and $F\llcorner\widetilde{\Sigma}$ gives a double cover of $\Sigma$. 
Using $F$, we can pull back the metric $g_{_M}$ from $M$ to $\widetilde{M}$ so that $F$ is a local isometry and $G$ acts on $\widetilde{M}$ by isometries. 
Thus, $\widetilde{\Sigma}$ is a minimal $G$-hypersurface in $\widetilde{M}$ with an inward pointing $G$-invariant unit normal. 
In particular, $\Sigma\in\mS^G_-(M)$ implies $S\Sigma$ and $M\setminus\Sigma$ are both $G$-connected, and thus $\widetilde{M}$ is $G$-connected.

\subsection{$G$-sweepouts correspond to $\Sigma\in\mS^G(M)$}

\begin{proposition}\label{Prop: sweepout construction 1}
	Given any $\Sigma\in \mS^G_+(M)$, there exists an $\mF $-continuous $G$-sweepout $\Phi: [-1,1]\to \Z_n^G(M;\mZ_2)$ of $M$ so that 
	\begin{itemize}
		\item[(i)] $\Phi(0)=[[\Sigma]]$, $\Phi(-1)=\Phi(1)=0$;
		\item[(ii)] $\M(\Phi(x))\leq {\rm Area}(\Sigma)$ with equality only if $x=0$. 
	\end{itemize}
\end{proposition}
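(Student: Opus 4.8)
Following the strategy outlined in the introduction, I would glue the first--eigenfunction foliation of a $G$--neighborhood of $\Sigma$ to $G$--sweepouts of the two complementary half--spaces of $M$, controlling the mass on the half--space pieces by a contradiction argument built on the equivariant min--max theorem in manifolds with boundary (Theorem~\ref{Thm: min-max theorem for M with boundary}) and on the fact that any two closed minimal hypersurfaces of a positive Ricci manifold must intersect (Lemma~\ref{Lem: hypersurface in positive Ricci manifold}(v)). To set up: since $\Sigma\in\mS^G_+(M)$ has a $G$--invariant unit normal $\nu$, Lemma~\ref{Lem: G-hypersurface in positive Ricci manifold} shows $\Sigma$ separates $M$ into two $G$--components $M^+,M^-$, with $\nu$ pointing into $M^+$, each a compact $G$--manifold with boundary $\Sigma$; by Lemma~\ref{Lem: hypersurface in positive Ricci manifold}(iv) and Lemma~\ref{Lem: first eigenfunction}, $\Sigma$ is unstable with $\mu_1(\Sigma)=\mu_1^G(\Sigma)<0$, and its first eigenfunction $u_1>0$ is $G$--invariant. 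Set $F_t:=\exp_\Sigma^\perp(t\,u_1\nu)$ for $|t|\le\epsilon$; for $\epsilon$ small this is a $G$--equivariant foliation of a $G$--neighborhood of $\Sigma$ by the $G$--hypersurfaces $\Sigma_t:=F_t(\Sigma)$. Since $\Sigma$ is minimal and unstable and $u_1>0$, the variation $\{F_t\}$ strictly decreases area, so after shrinking $\epsilon$ we have ${\rm Area}(\Sigma_t)<{\rm Area}(\Sigma)$ for $0<|t|\le\epsilon$; and, by the standard first--variation formula for the mean curvature (using minimality of $\Sigma$ and $u_1>0$), $\Sigma_{\pm\epsilon}$ is \emph{strictly mean convex} as the boundary of $\Omega^\pm$, where $\Omega^\pm$ denotes the closure of the component of $M\setminus\Sigma_{\pm\epsilon}$ disjoint from $\Sigma$. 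Thus $\Omega^\pm$ is a compact $G$--manifold with $\partial\Omega^\pm=\Sigma_{\pm\epsilon}$, $\Omega^\pm\subset M\setminus\Sigma$, $H_{\partial\Omega^\pm}>0$ and ${\rm Area}(\partial\Omega^\pm)<{\rm Area}(\Sigma)$.

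\textbf{Step 2: the half--spaces have width less than ${\rm Area}(\Sigma)$.} I claim $W^G(\Omega^\pm,\partial\Omega^\pm)\le{\rm Area}(\partial\Omega^\pm)<{\rm Area}(\Sigma)$. Suppose not, so $W^G(\Omega^\pm,\partial\Omega^\pm)>{\rm Area}(\partial\Omega^\pm)$ for one of the signs. Since also $H_{\partial\Omega^\pm}>0$ and $3\le{\rm codim}(G\cdot p)\le7$ on $\Omega^\pm\subset M$, the pair $(\Omega^\pm,\partial\Omega^\pm)$ satisfies~(\ref{Eq: assumption on boundary}); Theorem~\ref{Thm: min-max theorem for M with boundary} then yields a nonzero integral $G$--varifold supported on disjoint closed smooth embedded minimal $G$--hypersurfaces contained in $\interior(\Omega^\pm)$. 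Each of these is a closed embedded minimal hypersurface of $M$ contained in $M\setminus\Sigma$, hence disjoint from $\Sigma$, contradicting Lemma~\ref{Lem: hypersurface in positive Ricci manifold}(v). This proves the claim.

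\textbf{Step 3: assembling $\Phi$.} By the claim, Corollary~\ref{Cor: F-continuous width}, and the definition of $W^G$ as an infimum, I may choose $\mF$--continuous $G$--sweepouts $\Psi^\pm\in\mathcal P^G(\Omega^\pm,\partial\Omega^\pm)$ with $\sup_x\M(\Psi^\pm(x))<{\rm Area}(\Sigma)$, $\Psi^\pm(0)=[[\partial\Omega^\pm]]=[[\Sigma_{\pm\epsilon}]]$, $\Psi^\pm(1)=0$, each a smooth collar foliation of $\partial\Omega^\pm$ near $x=0$ (Definition~\ref{Def: sweepout and width in M with boundary}(ii)); since their supports lie in $M$ we regard $\Psi^\pm(x)\in\Z_n^G(M;\mZ_2)$. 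For a suitable $a\in(0,1)$, define $\Phi:[-1,1]\to\Z_n^G(M;\mZ_2)$ by concatenating, after affine reparametrizations: a reversed, reparametrized copy of $\Psi^-$ on $[-1,-a]$ (from $0$ at $x=-1$ to $[[\Sigma_{-\epsilon}]]$ at $x=-a$); the foliation $x\mapsto[[\Sigma_{t(x)}]]$ with $t$ increasing from $-\epsilon$ to $\epsilon$ on $[-a,a]$ (so $\Phi(0)=[[\Sigma_0]]=[[\Sigma]]$); and $\Psi^+$ on $[a,1]$ (from $[[\Sigma_\epsilon]]$ at $x=a$ to $0$ at $x=1$). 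Consecutive pieces agree at the junctions, so $\Phi$ is $\F$--continuous with continuous mass, hence $\mF$--continuous by~(\ref{Eq: F-metric}), and by Lemma~\ref{Lem: no mass concentration} it has no concentration of mass on orbits. Item~(i) is immediate. Item~(ii) holds since $\M([[\Sigma_t]])<{\rm Area}(\Sigma)$ for $t\ne0$ (Step~1), $\M(\Psi^\pm(x))<{\rm Area}(\Sigma)$ for all $x$ (Step~2), and $\M(\Phi(0))={\rm Area}(\Sigma)$. Finally, running the construction of $F_M$ in~(\ref{Eq: divide a curve})--(\ref{Eq: isomorphism map}), the isoperimetric choices between consecutive values of $\Phi$ telescope along $[0,1]$ to $[[M^+]]$ and along $[-1,0]$ to $[[M^-]]$, so $F_M(\Phi)=[[M^+]]+[[M^-]]=[M]\ne0$; hence $\Phi\in\mathcal P^G(M)$ is a $G$--sweepout with properties~(i)--(ii).

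\textbf{Main obstacle.} The two points that require genuine care are (a) the strict mean convexity of $\Sigma_{\pm\epsilon}$ towards $\Omega^\pm$ asserted in Step~1 --- i.e. getting the sign right in the first variation of the mean curvature along the positive--eigenfunction foliation --- which is exactly what makes Theorem~\ref{Thm: min-max theorem for M with boundary} applicable to $(\Omega^\pm,\partial\Omega^\pm)$; and (b) the verification that $F_M(\Phi)=[M]$ in Step~3, i.e. that the half--space sweepouts $\Psi^\pm$ together with the collar foliation genuinely fill up $M$ (a telescoping argument closed off by the Constancy Theorem). The remaining ingredients --- $\mF$--continuity, the strict mass bound, and the absence of mass concentration --- are immediate from the results recalled above.
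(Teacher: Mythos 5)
Your proposal is correct and follows essentially the same route as the paper: foliate a $G$-neighborhood of $\Sigma$ by the (G-invariant, positive) first-eigenfunction variation, note the leaves $\Sigma_{\pm\epsilon}$ have strictly less area and are strictly mean convex toward the complementary half-spaces, rule out $W^G(\Omega^\pm,\partial\Omega^\pm)>{\rm Area}(\partial\Omega^\pm)$ via Theorem \ref{Thm: min-max theorem for M with boundary} and the intersection property Lemma \ref{Lem: hypersurface in positive Ricci manifold}(v), then concatenate near-optimal half-space sweepouts with the foliation and check $F_M(\Phi)=[M]$ by the Constancy-Theorem telescoping. The only cosmetic difference is that you build the foliation with $\exp^\perp_\Sigma(tu_1\nu)$ while the paper flows along the vector field $(u_1\circ n_\Sigma)\nabla d_\pm$, and your parenthetical justification of the mean convexity should cite instability ($L_\Sigma u_1=-\mu_1 u_1>0$) rather than just minimality and $u_1>0$, which is exactly the paper's computation.
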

\begin{proof}
	By Lemma \ref{Lem: G-hypersurface in positive Ricci manifold}, $M\setminus \Sigma$ has two $G$-components $M_1$ and $M_2$ so that the unit normal $\nu$ of $\Sigma$ pointing inward $M_1$. 
	Additionally, it follows from Lemma \ref{Lem: first eigenfunction} and \ref{Lem: hypersurface in positive Ricci manifold} that the first eigenfunction $u_1>0$ of $L_\Sigma$ is a $G$-invariant function satisfying $L_\Sigma u_1 = - \mu_1 (\Sigma) u_1>0$. 
	
	Denote by $d_{ \pm}$ the signed distance function to $\Sigma$ so that $d_{ \pm} = \dist_M(\Sigma,\cdot)$ in $M_1$, and $d_{ \pm} = -\dist_M(\Sigma,\cdot)$ in $M_2$. 
	Let $X\in \mathfrak{X}^G(M)$ be a $G$-vector field with $X=(u_1\circ n_\Sigma ) \cdot \nabla d_{\pm}$ in a neighborhood of $\Sigma$, where $n_\Sigma$ is the nearest projection (in $M$) to $\Sigma$. 
	Then we consider the $G$-equivariant variation $\{\Sigma_t := F_t(\Sigma)\}_{t\in [-r, r ]}$ of $\Sigma$, where $\{F_t\}$ are the $G$-equivariant diffeomorphisms generated by $X$. 
	By the second variation formula (\ref{Eq: first and second variation formula}), we have 
	$$ \delta^2\Sigma(X) = \frac{d^2}{dt^2}\Big|_{t=0}{\rm Area}(\Sigma_t) =- \int_\Sigma u_1L_\Sigma u_1 <0, 
		\quad \frac{d}{dt}\Big|_{t=0}\langle \vec{H}_{\Sigma_t}, \nabla d_{\pm}\rangle = L_\Sigma u_1 >0,$$
	where $\vec{H}_{\Sigma_t}$ is the mean curvature vector field of $\Sigma_t$. 
	Thus, for $r>0$ small enough, 
	$${\rm Area}(\Sigma_t) < {\rm Area}(\Sigma) \quad {\rm and}
		\quad \langle \vec{H}_{\Sigma_t}, \nabla \dist_{M}(\Sigma, \cdot)\rangle>0,\quad {\rm for~all~} t\in [-r,0)\cup (0,r].$$ 
	Define $\Phi(x) := [[\Sigma_x]] = (F_x)_\#[[\Sigma]]\in\Z_n^G(M;\mZ_2)$ for $x\in [-r,r]$, which is $\mF$-continuous. 
	
	Since $u_1>0$, $\{\Sigma_t\}_{t\in [-r,r]}$ is a smooth foliation of a $G$-neighborhood of $\Sigma$, and $\Sigma_t\subset M_1$ for $t>0$, $\Sigma_t\subset M_2$ for $t<0$. 
	We now consider the compact manifolds $M_{1}' := M_1\setminus \{\Sigma_t\}_{t\in [0,r)}$ and $M_2':=M_2\setminus \{\Sigma_t\}_{t\in (-r,0]}$, whose boundary $\partial M_i' = \Sigma_{r_i}$, ($i\in\{1,2\}$, $r_1=r$, $r_2=-r$), is a $G$-hypersurface with positive mean curvature pointing inward $M_i'$. 
	
	Suppose $W^G(M_i', \partial M_i') > {\rm Area}(\Sigma_{r_i})$. 
	Then by Theorem \ref{Thm: min-max theorem for M with boundary}, there exists a closed minimal $G$-hypersurface $\Sigma'$ in the interior of $M_i'$. 
	Noting $\Sigma\cap \Sigma' = \emptyset$, we get a contradiction from Lemma \ref{Lem: hypersurface in positive Ricci manifold}(v).
	Therefore, $W^G(M_i', \partial M_i')\leq {\rm Area}(\Sigma_{r_i})$. 
	By Definition \ref{Def: sweepout and width in M with boundary} and Corollary \ref{Cor: F-continuous width}, there exist $\epsilon>0$ small enough and an $\mF$-continuous $G$-sweepout $\Phi_i:[0,1]\to \Z_n^G(M_i;\mZ_2)$ so that $\Phi_i(0)= [[\Sigma_{r_i}]]$, $\Phi_i(1)=0$, and 
	$$\sup\{\M(\Phi_i(x)):x\in[0,1]\}\leq W^G(M_i', \partial M_i')+\epsilon \leq {\rm Area}(\Sigma_{r_i})+\epsilon < {\rm Area}(\Sigma). $$ 
	Finally, by reparametrization, we have a well-defined map $\Phi:[-1,1]\to \Z_n^G(M;\mZ_2)$, 
	$$ \Phi(x) := \left\{\begin{array}{ll}
				{ \Phi_2( -\frac{3}{2} x - \frac{1}{2} ) ,} & {x\in[-1,-\frac{1}{3} ] ,} 
				\\ {(F_{3rx})_\#[[\Sigma]] ,} & {x\in [ -\frac{1}{3}, \frac{1}{3}],}
				\\ { \Phi_1( \frac{3}{2} x - \frac{1}{2} ) ,} & {x\in[\frac{1}{3} , 1 ] ,} 
				\end{array}\right.  $$
	continuous in the $\mF$-topology satisfying (i) and (ii). 
	Additionally, the arguments before Definition \ref{Def: sweepout in closed manifolds} and \ref{Def: sweepout and width in M with boundary} indicate $F_M(\Phi) = [[M_{2}]] + [[M_1]] = [[M]]$, where $F_M$ is given by (\ref{Eq: isomorphism map}). 
	Hence, we have $\Phi\in\mathcal{P}^G(M)$. 
\end{proof}

\begin{proposition}\label{Prop: sweepout construction 2}
	Given any $\Sigma\in \mS^G_-(M)$, there exists an $\F $-continuous $G$-sweepout $\Phi: [0,1]\to \Z_n^G(M;\mZ_2)$ of $M$ with no concentration of mass on orbits so that 
	\begin{itemize}
		\item[(i)] $\Phi(0)=\Phi(1)=0$;
		\item[(ii)] $\sup \{\M(\Phi(x)) : x\in [0,1]\} < 2{\rm Area}(\Sigma)$.
	\end{itemize}
\end{proposition}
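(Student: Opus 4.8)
The plan is to mimic the construction in Proposition \ref{Prop: sweepout construction 1}, but working in the opened-up manifold $\widetilde{M}$ from (\ref{Eq: open up}) where the double cover $\widetilde{\Sigma} = \partial\widetilde{M}$ has a $G$-invariant unit normal, and then to push the resulting sweepout down to $M$ via the map $F$ of (\ref{Eq: reduce to M}) while inserting a small $G$-invariant catenoidal neck to save mass. First I would note that since $\Sigma \in \mathcal{S}^G_-(M)$, the manifold $\widetilde{M}$ is $G$-connected with boundary $\widetilde{\Sigma}$ a minimal $G$-hypersurface carrying an inward $G$-invariant unit normal, and $F\llcorner(\widetilde{M}\setminus\widetilde{\Sigma})$ is a $G$-equivariant diffeomorphism onto $M\setminus\Sigma$, while $F\llcorner\widetilde{\Sigma}$ is a double cover of $\Sigma$. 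Thus $\mathrm{Area}(\widetilde{\Sigma}) = 2\,\mathrm{Area}(\Sigma)$, and any $G$-sweepout of $(\widetilde{M},\partial\widetilde{M})$ whose support stays away from $\widetilde{\Sigma}$ descends under $F_\#$ to a family of $G$-invariant $n$-cycles in $M$ with the same mass.

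The next step is to run the argument of Proposition \ref{Prop: sweepout construction 1} inside $\widetilde{M}$: using Lemma \ref{Lem: first eigenfunction} and Lemma \ref{Lem: hypersurface in positive Ricci manifold}, the first eigenfunction $u_1 > 0$ of $L_{\widetilde{\Sigma}}$ is $G$-invariant, so the variation $\widetilde{\Sigma}_t = F_t(\widetilde{\Sigma})$ generated by $X = (u_1\circ n_{\widetilde{\Sigma}})\,\nabla d$ foliates a $G$-neighborhood of $\widetilde{\Sigma}$ by $G$-hypersurfaces of strictly smaller area that are mean-convex toward $\widetilde{\Sigma}$. Removing this collar yields a compact $G$-manifold $\widetilde{M}'$ with mean-convex boundary $\widetilde{\Sigma}_r$ of area $< 2\,\mathrm{Area}(\Sigma)$, and the contradiction argument with Lemma \ref{Lem: hypersurface in positive Ricci manifold}(v) forces $W^G(\widetilde{M}',\partial\widetilde{M}') \leq \mathrm{Area}(\widetilde{\Sigma}_r) < 2\,\mathrm{Area}(\Sigma)$ (note $\widetilde{\Sigma}_r$ is connected because $\widetilde M$ is $G$-connected, so assumption (\ref{Eq: assumption on boundary}) is either satisfied or trivially handled). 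Applying Theorem \ref{Thm: min-max theorem for M with boundary} together with Definition \ref{Def: sweepout and width in M with boundary} and Corollary \ref{Cor: F-continuous width} produces an $\mathcal{F}$-continuous $G$-sweepout $\widetilde{\Phi}$ of $(\widetilde{M}',\partial\widetilde{M}')$ with $\sup_x \mathbf{M}(\widetilde{\Phi}(x)) < 2\,\mathrm{Area}(\Sigma)$, supported away from $\widetilde{\Sigma}$; splicing in the collar foliation gives a $G$-sweepout $\widetilde{\Phi}:[0,1]\to\mathcal{Z}_n^G(\widetilde M;\mathbb{Z}_2)$ running from $[[\widetilde{\Sigma}]]$ to $0$ with the same mass bound except at the single endpoint where $\mathbf{M} = 2\,\mathrm{Area}(\Sigma)$.

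The crucial remaining issue — and the main obstacle — is the endpoint: naively $F_\#[[\widetilde{\Sigma}]] = 2[[\Sigma]] = 0$ in $\mathbb{Z}_2$ coefficients, but pushing the whole family down gives a closed loop in $\mathcal{Z}_n^G(M;\mathbb{Z}_2)$, and we must (a) verify it is homotopically non-trivial, i.e. $F_M(\Phi) = [M]$, and (b) strictly lower the mass at the $\widetilde{\Sigma}$-end below $2\,\mathrm{Area}(\Sigma)$. For (b) I would invoke the $G$-equivariant catenoid estimate of Ketover--Marques--Neves \cite{ketover2020catenoid}: near $\Sigma$, inside the tube $B_{2r}(\Sigma)$, one replaces the degenerating double sheet $\widetilde{\Sigma}_{-t}\cup\widetilde{\Sigma}_t$ (which maps to a cycle close to $2[[\Sigma]]$) by a $G$-invariant surface obtained by gluing in catenoidal necks along a $G$-orbit of points (or along the whole of $\Sigma$, using a $G$-invariant logarithmic cutoff), whose area is $\leq 2\,\mathrm{Area}(\Sigma) - \epsilon_0$ for some $\epsilon_0 > 0$ and some small neck size; this connects the two sheets into a cycle that is the boundary of the region between them, hence is $\mathcal{F}$-close to $0$. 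Concatenating: start at $0$, sweep out one side of $\Sigma$ up to $\widetilde{\Sigma}_t$, insert the catenoid neck family (staying below $2\,\mathrm{Area}(\Sigma)$), pass through the necked configuration, then sweep back down the other side to $0$; checking that the catenoid-neck portion keeps mass strictly below $2\,\mathrm{Area}(\Sigma)$ is exactly where \cite{ketover2020catenoid} is essential, and verifying $G$-equivariance of their construction (cutoff and neck are taken $G$-invariantly, which is possible since $\widetilde{\Sigma}\to\Sigma$ is a $G$-cover) is the technical heart. For (a), one computes $F_M(\Phi)$ via the isoperimetric-choice procedure of (\ref{Eq: divide a curve})--(\ref{Eq: isomorphism map}): the sweepout traverses, in succession, the two $G$-components of $M\setminus\Sigma$ (which exhaust $M$ since $M\setminus\Sigma$ is $G$-connected, cf. Lemma \ref{Lem: G-hypersurface in positive Ricci manifold}), so the accumulated $(n+1)$-chain is all of $M$ and $F_M(\Phi)=[M]\neq 0$; no concentration of mass on orbits follows from Lemma \ref{Lem: no mass concentration} since $\Phi$ is $\mathcal{F}$-continuous.
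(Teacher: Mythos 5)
Your outline follows essentially the same route as the paper: open $M$ along $\Sigma$ to obtain $\widetilde{M}$ with $\partial\widetilde{M}=\widetilde{\Sigma}\in\mS^G_+(\widetilde M)$, run the construction of Proposition \ref{Prop: sweepout construction 1} there (first-eigenfunction foliation plus the half-space min-max Theorem \ref{Thm: min-max theorem for M with boundary}), strictly lower the mass near the collapsing end by an equivariant neck insertion, push down by $F_\#$, and verify $F_M(\Phi)=[M]$ through the isoperimetric choices (the accumulated chain is $F_\#[[\widetilde M]]=[[M]]$). However, two points in your write-up need repair. First, the no-concentration claim: you invoke Lemma \ref{Lem: no mass concentration}, but that lemma requires $\mF$-continuity, and the pushed-down family $\Phi=F_\#\widetilde{\Phi}'$ cannot be $\mF$-continuous at the collapsing end. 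Indeed, to stay $G$-invariant the family must push off \emph{both} sheets of $\widetilde\Sigma$ simultaneously (there is no $G$-invariant way to "sweep one side first" when the sides are exchanged by $G$ or absent), so for small $x>0$ one has $\M(\Phi(x))=\|\widetilde{\Phi}'(x)\|(\widetilde M\setminus\widetilde\Sigma)$ close to $2\Area(\Sigma)-\epsilon'$, while $\M(\Phi(0))=0$ because the two sheets cancel mod $2$; the mass jumps and only $\F$-continuity survives (which is why the statement is phrased with $\F$). Flat continuity alone does not exclude concentration of mass. The fix is direct: since $F$ is a $G$-equivariant local isometry which is at most $2$-to-$1$, $\|\Phi(x)\|(B^G_r(p))\le\|\widetilde{\Phi}'(x)\|(F^{-1}(B^G_r(p)))\le 2\,{\bf m}^G(\widetilde{\Phi}',r)$, and the upstairs family is $\mF$-continuous, so the property descends.

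Second, the mass-lowering neck is where the quantitative content lies, and you defer it wholesale to \cite{ketover2020catenoid}. Here the neck must be inserted along a positive-dimensional orbit set $P=G\cdot\tilde p\cup G\cdot\tau(\tilde p)$ of codimension $l={\rm codim}(G\cdot p)-1\in\{2,\dots,6\}$ inside $\widetilde\Sigma$, not at points of a surface, so the cited estimate cannot be quoted verbatim. The paper splits into cases: for $l\ge 3$ no catenoid estimate is needed at all — deleting $\tB_{r,t}(P)$ from the leaf and gluing the lateral cylinder (area $O(tr^{l-1})$) plus the cap on $\widetilde\Sigma$ already wins, because the cross term $Ctr^{l-1}\le \tfrac{C_0}{2}t^2+\tfrac{2C}{C_0}r^{2l-2}$ is absorbed by the gains $-C_0t^2-cr^{l}$ when $l\ge 3$; only for $l=2$ is the logarithmic-cutoff computation needed, carried out with the equivariant area expansion of \cite[(C.4)]{wang2022free} together with \cite[Proposition 2.5]{ketover2020catenoid}. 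Finally, for the mod-$2$ cancellation under $F_\#$ (both at the endpoint and for the cap or cutoff-graph portion lying on $\widetilde\Sigma$) you need invariance under the deck involution $\tau$ in addition to $G$-invariance — choose $P$ and the cutoff $\tau$-invariant; $G$-invariance alone does not guarantee that the leftover sheet on $\widetilde\Sigma$ cancels. With these repairs your proposal coincides with the paper's proof.
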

\begin{proof}
	Let $\widetilde{\Sigma} = S\Sigma \times \{0\}$ and $\widetilde{M}$ be given by (\ref{Eq: double cover Sigma})(\ref{Eq: open up}). 
	Then $\widetilde{M}$ is $G$-connected, ${\rm Area}(\widetilde{\Sigma})=2{\rm Area}(\Sigma)$, and $\widetilde{\Sigma}$ has a $G$-invariant unit normal $\tilde{\nu}$ pointing inward $\widetilde{M}$. 
	Let $\tau: \widetilde{\Sigma} \to \widetilde{\Sigma}$ be the isometric involution, i.e. $\tau(v,0)=(-v,0)$ for $v\in S\Sigma$. 
	
	Using the constructions in Proposition \ref{Prop: sweepout construction 1} with $\widetilde{M}$ in place of $M_1$, we get an $\mF$-continuous $G$-sweepout $\widetilde{\Phi}:[0,1]\to \Z_n^G(\widetilde{M};\mZ_2)$ so that $\widetilde{\Phi}(0) = [[\widetilde{\Sigma}]]$, $\widetilde{\Phi}(1) =0$, and $\M(\widetilde{\Phi}(x) ) \leq 2{\rm Area}(\Sigma)$ for all $x\in [0,1]$ with equality only at $x=0$. 
	Additionally, for $t\in [0, 1/3]$, $\widetilde{\Phi}(t) = \widetilde{\Sigma}_t := [[\exp_{\widetilde{\Sigma}}^\perp(t\tilde{u} \tilde{\nu}) ]]$, where $\tilde{u}=3tr\tilde{u}_1$ and $\tilde{u}_1:\widetilde{\Sigma} \to \R^+$ is the $G$-invariant first eigenfunction of $L_{\widetilde{\Sigma}}$ with eigenvalue $\mu_1(\widetilde{\Sigma}) = \mu_1^G(\widetilde{\Sigma}) <0$.

	Now, by the second variation formula (\ref{Eq: first and second variation formula}), there are $\delta_0\in (0, 1/3)$ and $C_0>0$ so that 
	\begin{eqnarray}\label{Eq: construct sweepout 2(1)}
		\M(\widetilde{\Phi}(t)) =  \mH^n( \widetilde{\Sigma}_t ) &=&  \mH^n( \widetilde{\Sigma}) - \frac{t^2}{2} \int_{\widetilde{\Sigma}}\langle L_{\widetilde{\Sigma}} \tilde{u} \tilde{\nu} , \tilde{u} \tilde{\nu} \rangle + O(t^3) \nonumber
		\\
		&\leq & \mH^n( \widetilde{\Sigma}) - C_0t^2 
	\end{eqnarray}
	for all $t\in (0,\delta_0)$. 
	For any $\delta\in (0, \delta_0)$ (will be specified later), the $\mF$-continuity of $\widetilde{\Phi}$ and Proposition \ref{Prop: sweepout construction 1}(ii) suggest the existence of $\epsilon>0$ with
	\begin{equation}\label{Eq: construct sweepout 2(2)}
		\M(\widetilde{\Phi}(t) )\leq \mH^n( \widetilde{\Sigma}) - \epsilon , \qquad\forall t\in [\delta ,1 ].
	\end{equation}
	Now, we will open up $\widetilde{\Sigma}_t$, $t\in [0,\delta]$, at some orbit to decrease the area.

	Specifically, let $G\cdot \tilde{p} \subset \widetilde{\Sigma}^{prin}$ be any principal orbit of $\widetilde{\Sigma}$. 
	Then by the $G$-invariance of $\tilde{\nu}$ and \cite[Corollary 2.2.2]{berndt2016submanifolds}, $G\cdot \tilde{p}\subset \widetilde{M}^{prin}$ is also a principal orbit in $\widetilde{M}$. 
	Note either $G\cdot \tilde{p} = G\cdot \tau(\tilde{p})$ or $G\cdot \tilde{p}\cap G\cdot \tau(\tilde{p})=\emptyset$. 
	Thus, we can define $P:=G\cdot \tilde{p} \cup G\cdot \tau(\tilde{p})$ as a $G$-invariant submanifold in $\widetilde{\Sigma}$ with dimension $n-l$. 
	By assumptions, $3\leq {\rm codim}(G\cdot \tilde{p}) = l+1\leq 7$. 
	
	{\bf Case 1: $3\leq l\leq 6$.} 
	For any $r>0,t \in [0,\delta ]$, we define the following $G$-invariant sets:
	\begin{itemize}
		\item $\tB_r(P):=\{\tilde{q}\in\widetilde{\Sigma} : \dist_{\widetilde{\Sigma}}(\tilde{q}, P ) < r \} \subset \widetilde{\Sigma}$ ;
		\item $\tB_{r,t}(P) := \{ \exp_{\widetilde{\Sigma}}^\perp\big((t \tilde{u} \tilde{\nu} )(\tilde{q}) \big) :  \tilde{q}\in\tB_r(P) \}\subset \widetilde{\Sigma}_t$;
		\item $\tC_{r,t}(P) := \{ \exp_{\widetilde{\Sigma}}^\perp\big((s \tilde{u} \tilde{\nu} )(\tilde{q}) \big) :  \tilde{q}\in\tB_r(P), s\in[0,t]\}$. 
	\end{itemize}
	Then for $R,\delta>0$ small enough, it follows from the integral formula in \cite[(C.4)]{wang2022free} that 
	\begin{equation}\label{Eq: catenoid estimate 1}
		ct r^{l-1}  \leq \mathcal{H}^{n}(\tC_{r, t}(P)) \leq Ct r^{l-1} 
		\quad {\rm and}\quad 
		c r^{l}  \leq \mathcal{H}^{n}(\tB_{r, t}(P)) \leq C r^{l},
	\end{equation} 
	for all $r\in [0,R], t\in [0,\delta ]$, where $c,C>0$ are constants depending on $\widetilde{\Sigma},\widetilde{M},P$. 
	Define 
	$$ \widetilde{\Sigma}_{r,t} := \left( \widetilde{\Sigma}_t\setminus \tB_{r,t}(P)\right) \cup \tC_{r,t}(P) \cup \tB_r(P), \quad r\in [0,R], t\in [0,\delta ].$$
	By (\ref{Eq: construct sweepout 2(1)})(\ref{Eq: catenoid estimate 1}), $\|\widetilde{\Sigma}_{r,t}\|(\widetilde{M}\setminus\widetilde{\Sigma} ) \leq \mH^n(\widetilde{\Sigma} ) - C_0t^2 - cr^l + Ctr^{l-1}$. 
	
	Note $Ctr^{l-1}\leq \frac{C_0}{2}t^2 + \frac{2C}{C_0}r^{2l-2}$ and $l\geq 3$ in this case. 
	We can take $R>0$ small enough so that $\frac{2C}{C_0}R^{l-2} < \frac{c}{2}$. 
	%
	Hence,
	$$ \|\widetilde{\Sigma}_{r,t}\|(\widetilde{M}\setminus\widetilde{\Sigma} ) \leq \mH^n(\widetilde{\Sigma} ) - \frac{C_0}{2} t^2 - \frac{c}{2}r^l ,$$
	for all $t\in [0,\delta ]$, $r\in [0,R]$, and 
	$$\widetilde{\Sigma}_{t}' := 
				\left\{\begin{array}{ll}
					{ \widetilde{\Sigma}_{R,2t} } & {~  t\in [0, \frac{\delta}{2} ],} 
				\\ { \widetilde{\Sigma}_{2R(1- t/\delta) ,\delta} } & {~ t\in [ \frac{\delta}{2} ,\delta ],} 
				\end{array}\right. 
		\quad  
		\|\widetilde{\Sigma}_{t}' \|(\widetilde{M}\setminus\widetilde{\Sigma} )\leq 
				\left\{\begin{array}{ll}
					{ \mH^n(\widetilde{\Sigma} ) - cR^l /2} & {~  t\in [0, \frac{\delta}{2} ],} 
				\\ { \mH^n(\widetilde{\Sigma} ) - C_0 \delta^2/2 } & {~ t\in [ \frac{\delta}{2} ,\delta ].} 
				\end{array}\right. 
	$$
	Set $\epsilon' :=\min\{\epsilon, cR^l/2, C_0\delta^2/2 \}$ and define $\widetilde{\Phi}'(t) :=[[ \widetilde{\Sigma}_t']]$ for $t\in [0,\delta]$ in this case. 
	
	{\bf Case 2: $l=2$.} 
	For $R>r>0$ small enough, let $\eta_{r,R}:\widetilde{\Sigma}\to [0,1]$ be the $G$-invariant logarithmic cut-off function defined by 
	$$ \eta_{r,R}(\tilde{q} ):=\left\{\begin{array}{ll}
					{ 1 } & {\quad  \tilde{q}\notin \tB_R(P) ,} 
				\\ { (\log r - \log(\dist_{\widetilde{\Sigma}}(\tilde{q}, P ) ) )/ (\log r- \log R) } & {\quad \tilde{q}\in \tB_R(P)\setminus \tB_r(P),} 
				\\ { 0 } & {\quad  \tilde{q}\in \tB_r(P) ,} 
				\end{array}\right.   $$
	which is also $\tau$-invariant. 
	Consider 
	$$ \widetilde{\Sigma}_{r,R,t} := \exp_{\widetilde{\Sigma}}^\perp(t \eta_{r,R}\tilde{u}\tilde{\nu} ) .$$
	By \cite[Proposition 2.5]{ketover2020catenoid} and \cite[(C.4)]{wang2022free}, we can take $R,\delta>0$ small enough so that
	\begin{eqnarray*}
		\|\widetilde{\Sigma}_{r,R,t}\|(\widetilde{M}\setminus\widetilde{\Sigma} ) &\leq & \mH^n(\widetilde{\Sigma}\setminus \tB_r(P) ) + \frac{t^2}{2}\int_{\widetilde{\Sigma}} |\nabla (\eta_{r,R}\tilde{u}) |^2 - (\Ric(\tilde{\nu},\tilde{\nu} ) + |A|^2 ) (\eta_{r,R}\tilde{u})^2 
		\\ && + Ct^3\int_{\widetilde{\Sigma}} 1 +|\nabla (\eta_{r,R}\tilde{u}) |^2
		\\ &\leq & \mH^n(\widetilde{\Sigma}\setminus \tB_r(P) ) - C_1t^2  + C_2t^2\int_{\widetilde{\Sigma}} |\nabla \eta_{r,R} |^2 + t^2\int_{\tB_R(P)} \tilde{u} \eta_{r,R} \nabla \tilde{u}\nabla \eta_{r,R}
		\\ && + Ct^3\int_{\widetilde{\Sigma}} 1 + 2\eta_{r,R}^2|\nabla  \tilde{u}|^2 + 2\tilde{u}^2|\nabla \eta_{r,R} |^2 
		\\ &\leq & \mH^n(\widetilde{\Sigma}) - cr^2-C_1t^2 + \frac{C_3}{\log(R/r)} t^2 + C_4R^2t^2 + C_5t^3 + \frac{C_6}{\log(R/r)} t^3,
	\end{eqnarray*}
	for all $r\in (0,R), t\in[0,\delta ]$, where $c,C,C_i>0$ are uniform constants depending on $\widetilde{\Sigma},\widetilde{M},P$. 
	Set $R,\delta >0$ even smaller so that $C_4R^2< C_1/4$, $C_5\delta <C_1/4$, and $C_6\delta <C_3$. 
	Then choose $r>0$ small enough with $\frac{2C_3}{\log(R/r)} < \frac{C_1}{4} $. 
	Thus, 
	$$\|\widetilde{\Sigma}_{r,R,t}\|(\widetilde{M}\setminus\widetilde{\Sigma} )\leq  \mH^n(\widetilde{\Sigma}) - cr^2 - \frac{C_1}{2} t^2 + \frac{2C_3}{\log(R/r)}t^2\leq \mH^n(\widetilde{\Sigma}) - cr^2 - \frac{C_1}{4} t^2 $$
	for all $t\in [0,\delta]$, and 
	$$\widetilde{\Sigma}_{t}' := 
				\left\{\begin{array}{ll}
					{ \widetilde{\Sigma}_{r,R,2t} } & {~  t\in [0, \frac{\delta}{2} ],} 
				\\ { \widetilde{\Sigma}_{2r(1- t/\delta),2R(1- t/\delta) ,\delta} } & {~ t\in [ \frac{\delta}{2} ,\delta ],} 
				\end{array}\right. 
		\quad  
		\|\widetilde{\Sigma}_{t}' \|(\widetilde{M}\setminus\widetilde{\Sigma} )\leq 
				\left\{\begin{array}{ll}
					{ \mH^n(\widetilde{\Sigma} ) - cr^2 } & {~  t\in [0, \frac{\delta}{2} ],} 
				\\ { \mH^n(\widetilde{\Sigma} ) - C_1 \delta^2/4 } & {~ t\in [ \frac{\delta}{2} ,\delta ].} 
				\end{array}\right. 
	$$
	In this case, set $\epsilon' :=\min\{\epsilon, cr^2, C_1\delta^2/4 \}$ and define $\widetilde{\Phi}'(t) :=[[ \widetilde{\Sigma}_t']]$ for $t\in [0,\delta]$. 	
	
	In both cases, we define $\widetilde{\Phi}'\llcorner [\delta, 1]=\widetilde{\Phi}\llcorner[\delta, 1 ]$ and see
	\begin{equation}\label{Eq: mass in interior strictly decrease}
		\sup \{ \|\widetilde{\Phi}'(t) \| (\widetilde{M}\setminus\widetilde{\Sigma}) : t\in [0,1]\} \leq \mH^n(\widetilde{\Sigma} )-\epsilon' .
	\end{equation}
	Additionally, by (\ref{Eq: F-metric}), $\widetilde{\Phi}'$ is still an $\mF$-continuous map with $\widetilde{\Phi}'=[[\widetilde{\Sigma}]] $, $\widetilde{\Phi}'(1) =0$.

	Finally, we define $\Phi(x):= F_\#\widetilde{\Phi}'(x)$ for all $x\in [0,1]$, where $F:\widetilde{M}\to M$ is the equivariant local isometry given by (\ref{Eq: reduce to M}). 
	Because $F:\widetilde{M}\setminus\widetilde{\Sigma}\to M\setminus\Sigma$ is an equivariant isometry, the arguments before Definition \ref{Def: sweepout in closed manifolds} and \ref{Def: sweepout and width in M with boundary} indicate $F_M(\Phi) = F_\#([[\widetilde{M}]] ) = M$, where $F_M$ is given by (\ref{Eq: isomorphism map}). 
	Additionally, note $F:\widetilde{\Sigma}\to\Sigma$ is a double cover and $\widetilde{\Sigma}'_t\cap\widetilde{\Sigma}$ is $\tau$-invariant in both cases. 
	Hence, by $\mZ_2$-coefficients and (\ref{Eq: mass in interior strictly decrease}), we have $\Phi(0)=F_\#[[\widetilde{\Sigma}]]=0$ and 
	$$ \M(\Phi(x)) =  \|\widetilde{\Phi}'(t) \| (\widetilde{M}\setminus\widetilde{\Sigma}) \leq \mH^n(\widetilde{\Sigma} )-\epsilon' = 2{\rm Area}(\Sigma) - \epsilon'.$$
	At last, since $\|\Phi(x)\|(B^G_r(p)) \leq \|\widetilde{\Phi}'(x)\|(F^{-1}(B^G_r(p))) \leq 2{\bf m}^G(\Phi', r) $ for all $x\in [0,1], p\in M$, we see ${\bf m}^G(\Phi, r) \leq 2{\bf m}^G(\widetilde{\Phi}', r)$ and $\Phi$ has no concentration of mass on orbits. 
\end{proof}

\section{Proof of the main theorems}\label{Sec: proof main}
Let $\mS^G(M)$ be given in (\ref{Eq: minimal hypersurfaces set}). 
Then we define 
\begin{equation}\label{Eq: least area}
	\mathcal{A}^G(M) := \inf_{\Sigma\in\mS^G(M)}
				\left\{\begin{array}{ll}
					{ {\rm Area}(\Sigma) ,} & {\quad {\rm if}~\Sigma\in\mS^G_+(M)} 
				\\ { 2{\rm Area}(\Sigma) ,} & {\quad {\rm if}~\Sigma\in\mS^G_-(M)} 
				\end{array}\right\}.
\end{equation}

\begin{theorem}\label{Thm: main 1}
	Let $(M^{n+1}, g_{_M})$ be a closed connected orientable Riemannian manifold with positive Ricci curvature, and $G$ be a compact Lie group acting on $M$ isometrically so that $3\leq {\rm codim}(G\cdot p)\leq 7$ for all $p\in M$. 
	Then the equivariant min-max hypersurface $\Sigma$ corresponding to the fundamental class $[M]$ is a connected minimal $G$-hypersurface of multiplicity one with a $G$-invariant unit normal vector field so that
	$$ {\rm Index}_G(\Sigma) = 1 \quad {\rm and}\quad {\rm Area}(\Sigma)=W^G(M)=\A^G(M) .$$
\end{theorem}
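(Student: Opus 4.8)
The plan is to first determine the structure of the equivariant min-max varifold using $\Ric_M>0$, then use the sweepout constructions of Section~\ref{Sec: sweepouts} to bound $W^G(M)$ from above and pin the multiplicity to one, and finally extract the $G$-index. Since $\partial M=\emptyset$ and $3\le {\rm codim}(G\cdot p)\le 7$, Theorem~\ref{Thm: min-max theorem} produces an integral $G$-varifold $V\in\V_n^G(M)$ with $\|V\|(M)=W^G(M)$ and $V=\sum_{i=1}^m n_i|\Sigma_i|$ for disjoint $G$-connected smooth closed embedded minimal $G$-hypersurfaces $\Sigma_i$. By Lemma~\ref{Lem: hypersurface in positive Ricci manifold}(v), any two closed minimal hypersurfaces of $M$ intersect, so disjointness forces $m=1$; writing $V=n_1|\Sigma|$, the hypersurface $\Sigma$ is connected by Lemma~\ref{Lem: G-hypersurface in positive Ricci manifold}.

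Next I would establish $W^G(M)\le \A^G(M)$. For any $\Sigma'\in\mS^G_+(M)$, Proposition~\ref{Prop: sweepout construction 1} produces $\Phi\in\mathcal{P}^G(M)$ with $\sup_{x}\M(\Phi(x))\le {\rm Area}(\Sigma')$, so $W^G(M)\le {\rm Area}(\Sigma')$; for any $\Sigma'\in\mS^G_-(M)$, Proposition~\ref{Prop: sweepout construction 2} produces $\Phi\in\mathcal{P}^G(M)$ (it has no concentration of mass on orbits) with $\sup_{x}\M(\Phi(x))<2{\rm Area}(\Sigma')$, so $W^G(M)<2{\rm Area}(\Sigma')$; taking the infimum over $\mS^G(M)$ gives the bound. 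Now if $\Sigma\in\mS^G_-(M)$, then $n_1$ is even by Theorem~\ref{Thm: min-max theorem}, so $W^G(M)=n_1{\rm Area}(\Sigma)\ge 2{\rm Area}(\Sigma)$, while Proposition~\ref{Prop: sweepout construction 2} applied to $\Sigma$ itself yields a member of $\mathcal{P}^G(M)$ of maximal mass strictly below $2{\rm Area}(\Sigma)$, forcing $W^G(M)<2{\rm Area}(\Sigma)\le W^G(M)$, a contradiction. Hence $\Sigma\in\mS^G_+(M)$, so $\Sigma$ carries a $G$-invariant unit normal and ${\rm Area}(\Sigma)$ enters the infimum~(\ref{Eq: least area}); then $n_1{\rm Area}(\Sigma)=W^G(M)\le \A^G(M)\le {\rm Area}(\Sigma)$ forces $n_1=1$ and $W^G(M)={\rm Area}(\Sigma)=\A^G(M)$.

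It remains to compute ${\rm Index}_G(\Sigma)$. On the one hand, $\Sigma$ is two-sided and minimal in a manifold with $\Ric_M>0$, so Lemma~\ref{Lem: hypersurface in positive Ricci manifold}(iv) gives $\mu_1(\Sigma)<0$ and Lemma~\ref{Lem: first eigenfunction} gives $\mu_1^G(\Sigma)=\mu_1(\Sigma)<0$, whence ${\rm Index}_G(\Sigma)\ge 1$. On the other hand, $\Sigma$ (with multiplicity one) is the minimal $G$-hypersurface obtained from one-parameter equivariant min-max, so the equivariant index upper bound for min-max $G$-hypersurfaces~\cite{wang2022equivariant} gives ${\rm Index}_G(\Sigma)\le 1$. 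Therefore ${\rm Index}_G(\Sigma)=1$, which completes the proof.

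The first two paragraphs are essentially bookkeeping with results already in hand; the genuine obstacle is the inequality ${\rm Index}_G(\Sigma)\le 1$, which does not follow from the width alone and requires the full equivariant deformation/interpolation theory --- the analogue of the Marques--Neves index estimate --- invoked here through~\cite{wang2022equivariant}. One should also double-check that the $G$-sweepouts furnished by Propositions~\ref{Prop: sweepout construction 1} and~\ref{Prop: sweepout construction 2} indeed belong to $\mathcal{P}^G(M)$, i.e.\ that they represent $[M]\in H_{n+1}(M;\mZ_2)$ and have no concentration of mass on orbits; this is recorded in the conclusions of those propositions.
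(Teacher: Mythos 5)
Your first two paragraphs follow the paper's own proof almost verbatim: Theorem \ref{Thm: min-max theorem} plus Lemma \ref{Lem: hypersurface in positive Ricci manifold}(v) to get $V=n_1|\Sigma|$ with $\Sigma$ connected, Proposition \ref{Prop: sweepout construction 2} to rule out $\Sigma\in\mS^G_-(M)$ via the evenness of $n_1$, and Proposition \ref{Prop: sweepout construction 1} to force $n_1=1$ and ${\rm Area}(\Sigma)=W^G(M)=\A^G(M)$; that part is fine. The lower bound ${\rm Index}_G(\Sigma)\ge 1$ via Lemma \ref{Lem: hypersurface in positive Ricci manifold}(iv) and Lemma \ref{Lem: first eigenfunction} is also exactly what the paper uses (implicitly).

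The gap is in the upper bound ${\rm Index}_G(\Sigma)\le 1$. Citing the general equivariant index estimate of \cite{wang2022equivariant} does not, as stated, apply to \emph{your} $\Sigma$: index upper bounds of Marques--Neves type are existence statements about the varifold produced by a specific min-max construction (with its own deformation arguments), whereas your $\Sigma$ was extracted from Theorem \ref{Thm: min-max theorem}, whose statement carries no index information, and there is no a priori reason the two constructions yield the same hypersurface (the width could be realized by several minimal $G$-hypersurfaces). To make your route work you would have to start from the index-bounded varifold of \cite{wang2022equivariant} and then re-verify for it the evenness-of-multiplicity statement (which in the paper is proved via the $(G,\mZ_2)$-almost minimizing property) before running the multiplicity-one argument --- none of which you address. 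The paper avoids this entirely with a short self-contained argument you are missing: assuming ${\rm Index}_G(\Sigma)\ge 2$, take the first two $G$-invariant eigenfunctions $u_1,u_2$ of $L_\Sigma\llcorner\mathfrak{X}^{\perp,G}(\Sigma)$, extend $u_2\nu$ to an equivariant vector field generating diffeomorphisms $\{F^2_s\}$, and deform the sweepout $\Phi$ of Proposition \ref{Prop: sweepout construction 1} (whose middle part is the $u_1$-foliation) by $\Phi_s(t):=(F^2_s)_\#\Phi(t)$. The area function $A(s,t)=\M(\Phi_s(t))$ has vanishing gradient at $(0,0)$, strictly negative pure second derivatives, and vanishing cross term (by $L^2$-orthogonality of $u_1,u_2$), so for small $\sigma>0$ the curve $\Phi_\sigma$ is still a $G$-sweepout (homotopic to $\Phi$) with $\sup_t\M(\Phi_\sigma(t))<{\rm Area}(\Sigma)=W^G(M)$, a contradiction. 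So the ``genuine obstacle'' you identify does not in fact require the multi-parameter index machinery of \cite{wang2022equivariant}; it is settled by this elementary two-parameter deformation, and moreover that argument applies to any $\Sigma\in\mS^G_+(M)$ realizing the width, sidestepping the identification problem your citation creates.
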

\begin{proof}
	By the min-max theorem \ref{Thm: min-max theorem}, there exists an integral $G$-varifold $V\in\V^G_n(M)$ induced by a smooth embedded closed minimal $G$-hypersurface $\Sigma\in\mS^G(M)$ so that $\|V\|(M)=W^G(M) $. 
	Since $M$ has positive Ricci curvature, Lemma \ref{Lem: hypersurface in positive Ricci manifold}(v) indicates that $\Sigma$ is connected, and thus $V=m|\Sigma|$ for some $m\in\{1,2,\dots\}$. 
	Suppose $\Sigma\in\mS^G_-(M)$, then it follows from the last statement in Theorem \ref{Thm: min-max theorem} that $m$ must be even, so $m\geq 2$. 
	However, we have a contradiction $W^G(M)<2{\rm Area}(\Sigma)\leq \|V\|(M) = W^G(M) $ by Proposition \ref{Prop: sweepout construction 2}.
	Therefore, $\Sigma\in\mS^G_+(M)$.
	By Proposition \ref{Prop: sweepout construction 1}, we see
	$W^G(M)\leq {\rm Area}(\Sigma) \leq \|V\|(M) = W^G(M) $, and thus $m=1$. 
	Additionally, by the definition of $\A^G(M)$ and Proposition \ref{Prop: sweepout construction 1}, \ref{Prop: sweepout construction 2}, 
	$$\A^G(M)\leq {\rm Area} (\Sigma) = \|V\|(M) = W^G(M) \leq \A^G(M).$$
	
	Now, it is sufficient to show ${\rm Index}_G(\Sigma) =1$. 
	Suppose ${\rm Index}_G(\Sigma) \geq 2$, and $u_1,u_2$ are the first two $L^2$-orthonormal $G$-invariant eigenfunctions of $L_\Sigma\llcorner \mathfrak{X}^{\perp,G}(\Sigma )$ with negative eigenvalues. 
	Let $u_2\nu$ be a $G$-invariant normal vector field on $\Sigma$ which extends to a smooth vector field $X\in\mathfrak{X}(M)$. 
	Then $X_2:= \int_G (g^{-1})_*X d\mu(g)\in\mathfrak{X}^G(M)$ gives an equivariant extension of $u_2\nu$. 
	Consider the equivariant diffeomorphisms $\{F_s^2\}$ generated by $X_G$ and define $\Phi_s(t):=(F_s^2)_\#\Phi(t)$ for $t\in [-1,1]$, where $\Phi\in\mathcal{P}^G(M)$ is the $\mF$-continuous sweepout given by Proposition \ref{Prop: sweepout construction 1}. 
	Recall that in the proof of Proposition \ref{Prop: sweepout construction 1}, $\Phi(t)= [[\Sigma_t]]=[[F_t^1(\Sigma)]]$ for $t\in [-1/3,1/3]$, where $\{F_t^1\}$ are the equivariant diffeomorphism generated by $X_1\in\mathfrak{X}^G(M) $ with $X_1\llcorner\Sigma = 3ru_1\nu$ for some $r>0$. 
	Hence, for the smooth family $\{F^2_s(\Sigma_t) \}_{s\in[-\sigma ,\sigma ],t\in[-1/3,1/3]}$, the area function $A(s,t):={\rm Area}(F^2_s(\Sigma_t)) = \M(\Phi_s(t))$ satisfies:
	\begin{itemize}
		\item $\nabla A(0,0)=0$ since $\Sigma$ is minimal;
		\item $\frac{\partial^2}{\partial t^2}A(0,0) = -9r^2\int_\Sigma u_1L_\Sigma u_1 <0$ and $\frac{\partial^2}{\partial s^2}A(0,0) = -\int_\Sigma u_2L_\Sigma u_2 <0$;
		\item $\frac{\partial^2}{\partial s \partial t}A(0,0) = -3r\int_\Sigma u_2L_\Sigma u_1 =3r\mu_1(\Sigma)\int_\Sigma u_1u_2=0$. 
	\end{itemize}
	Therefore, we can set $\sigma,\delta >0$ sufficiently small so that 
	$$ \M(\Phi_s(t))= {\rm Area}(F^2_s(\Sigma_t )) < {\rm Area}(\Sigma),\quad\forall t\in[-\delta,\delta ], s\in (0,\sigma ]. $$
	Moreover, there exists $\epsilon>0$ so that $\M(\Phi(t))\leq {\rm Area}(\Sigma) - \epsilon$ for all $t\in [-1,-\delta ]\cup[\delta, 1 ]$ by Proposition \ref{Prop: sweepout construction 1}(ii). 
	Hence, by setting $\sigma>0$ even smaller, we have $\M(\Phi_\sigma(t))=\M((F^2_\sigma)_\#\Phi(t) ) < {\rm Area}(\Sigma)$ for all $t\in [-1,1]$. 
	Note $\Phi_\sigma$ is an $\mF$-continuous curve homotopic to $\Phi$ in $\Z_n^G(M;\mZ_2)$. 
	Thus, 
	$$ W^G(M)\leq \sup \{\M(\Phi_\sigma(t)): t\in [-1,1]\}< {\rm Area}(\Sigma)= W^G(M),$$
	which is a contradiction. So we have ${\rm Index}_G(\Sigma)=1$. 
\end{proof}

As an application, we use the conformal volume to show a genus bound for the equivariant min-max minimal $G$-hypersurface $\Sigma$ in Theorem \ref{Thm: main 1} provided that $\dim(M)=3$ and the actions of $G$ are orientation preserving. 

\begin{theorem}\label{Thm: main 2}
	Let $(M^3, g_{_M})$ be a closed connected oriented Riemannian $3$-manifold with positive Ricci curvature, and $G$ be a finite group acting on $M$ by orientation preserving isometries. 
	Then the equivariant min-max hypersurface $\Sigma$ corresponding to the fundamental class $[M]$ is a connected closed minimal $G$-surface of multiplicity one satisfying 
	$${\rm genus}(\Sigma )\leq 4 K \quad {\rm and} \quad W^G(M)={\rm Area}(\Sigma) \leq \frac{8\pi K}{\inf_{|v|=1}\Ric_M(v,v)},$$
	where $K:=\max_{p\in M} \# G\cdot p \leq \#G$ is the number of points in a principal orbit of $M$. 
	Additionally, $\pi(\Sigma)=\Sigma/G$ is an orientable surface with finite cone singular points of order $\{n_i\}_{i=1}^k$ (i.e. locally modeled by $\mathbb{B}^2_1(0)$ quotient a cyclic rotation group $\mZ_{n_i}$), so that
	$$ \sum_{i=1}^k (1-\frac{1}{n_i} ) \leq 4, \quad {\rm and }\quad  {\rm genus}(\pi(\Sigma))\leq 3.$$
	In particular, if $\Sigma\subset M^{prin} $, i.e. $k=0$, then ${\rm genus}(\Sigma)\leq 1+2K$. 
\end{theorem}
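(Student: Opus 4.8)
The first assertion — that $\Sigma$ is connected, has multiplicity one and a $G$-invariant unit normal $\nu$, with ${\rm Index}_G(\Sigma)=1$ and ${\rm Area}(\Sigma)=W^G(M)$ — is Theorem \ref{Thm: main 1}, so the work lies in the genus, area and orbifold statements. The plan is to push the index-one condition down to the orbit space and run a Li--Yau conformal volume argument there. Set $O:=\pi(\Sigma)=\Sigma/G$. Since $G$ is finite, preserves the orientation of $M$, and fixes $\nu$, it acts on $\Sigma$ by orientation-preserving isometries, and for $q\in\Sigma$ the isotropy $G_q$ acts effectively and orientation-preservingly on $T_q\Sigma\cong\R^2$, hence is cyclic, $G_q\cong\mZ_{n}$ acting by rotation. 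Thus $O$ is a closed connected orientable Riemannian $2$-orbifold with finitely many cone points $[q_1],\dots,[q_k]$ of orders $n_i=\#G_{q_i}$, each locally modeled on $\mathbb{B}^2_1(0)/\mZ_{n_i}$; its underlying surface $|O|$ has some genus $\gamma$; and $\pi\colon\Sigma\to O$ is a Riemannian orbifold cover of generic degree $d=\#G/\#H$, with $H$ the principal isotropy of $G\curvearrowright\Sigma$, so $d\le K$ (every isotropy of $G\curvearrowright M$ contains a conjugate of the $M$-principal isotropy, hence $\#H\ge\#G/K$). Over $[q_i]$ the fibre of $\pi$ has $d/n_i$ points, each of ramification $n_i$, so Riemann--Hurwitz gives
$$2-2\,{\rm genus}(\Sigma)=\chi(\Sigma)=d\cdot\chi^{\mathrm{orb}}(O),\qquad \chi^{\mathrm{orb}}(O):=2-2\gamma-\sum_{i=1}^k\Big(1-\frac1{n_i}\Big).$$

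By Lemma \ref{Lem: first eigenfunction} the first eigenfunction $u_1>0$ of $L_\Sigma$ is $G$-invariant, and ${\rm Index}_G(\Sigma)=1$ means the Jacobi form $Q_\Sigma(f):=\int_\Sigma\big(|\nabla f|^2-(|A|^2+\Ric_M(\nu,\nu))f^2\big)$ is $\ge0$ on the $L^2(\Sigma)$-orthogonal complement of $u_1$ inside the $G$-invariant functions — which are exactly the pullbacks of functions on $O$. Since $\pi$ is a local isometry off a measure-zero set and $|A|^2+\Ric_M(\nu,\nu)$ is $G$-invariant, $Q_\Sigma(h\circ\pi)=d\,Q_O(h)$ for the analogous form $Q_O$ on $O$, so $Q_O\ge0$ on $\bar u_1^{\perp}\subset L^2(O)$, where $\bar u_1$ is the descent of $u_1$. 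Uniformize $|O|$: by Riemann--Roch it carries a conformal (holo- or anti-holomorphic) branched cover $\phi\colon|O|\to S^2\subset\R^3$ of degree $m\le\lfloor(\gamma+3)/2\rfloor$, which is also a conformal branched map of the orbifold $O$ (the cone points being isolated). Running Hersch's center-of-mass/Brouwer argument over the conformal group of $S^2$ with respect to the finite positive measure $\bar u_1\,d{\rm Area}_O$ yields an automorphism $g$ with $\psi:=g\circ\phi$ satisfying $\int_O\psi_i\,\bar u_1\,d{\rm Area}_O=0$ $(i=1,2,3)$, so each $\psi_i\circ\pi$ is a smooth $G$-invariant function $L^2(\Sigma)$-orthogonal to $u_1$. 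Feeding $\psi_1\circ\pi,\psi_2\circ\pi,\psi_3\circ\pi$ into $Q_\Sigma\ge0$, summing, and using $\sum_i\psi_i^2\equiv1$ on $S^2$ gives
$$\int_\Sigma\big(|A|^2+\Ric_M(\nu,\nu)\big)\ \le\ \sum_{i=1}^3\int_\Sigma|\nabla(\psi_i\circ\pi)|^2\ =\ d\int_O|\nabla\psi|^2\ =\ 8\pi\,d\,m,$$
the last equality by conformal invariance of the Dirichlet energy of a degree-$m$ branched cover of $S^2$ (so the supremum over the conformal group in the conformal volume costs nothing).

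Now combine this with the Gauss equation $|A|^2+\Ric_M(\nu,\nu)=\Ric_M(e_1,e_1)+\Ric_M(e_2,e_2)-2K_\Sigma$ (for an orthonormal frame $e_1,e_2$ of $T\Sigma$, $K_\Sigma$ the Gauss curvature), Gauss--Bonnet $\int_\Sigma K_\Sigma=2\pi\chi(\Sigma)=2\pi d\,\chi^{\mathrm{orb}}(O)$, and $\Ric_M\ge c_M$ with $c_M:=\inf_{|v|=1}\Ric_M(v,v)>0$:
$$8\pi\,d\,m\ \ge\ 2c_M\,{\rm Area}(\Sigma)-4\pi d\,\chi^{\mathrm{orb}}(O),\qquad\text{i.e.}\qquad 2c_M\,{\rm Area}(\Sigma)\ \le\ 4\pi d\Big(2m+2-2\gamma-\sum_{i=1}^k\big(1-\tfrac1{n_i}\big)\Big).$$
Since $m\le\lfloor(\gamma+3)/2\rfloor$ forces $2m+2-2\gamma\le4$, and the left-hand side is nonnegative, this yields at once $\sum_i(1-1/n_i)\le 2m+2-2\gamma\le4$, $\gamma\le5$, and ${\rm Area}(\Sigma)\le 8\pi d/c_M\le 8\pi K/c_M$. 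To upgrade $\gamma\le5$ to the sharp ${\rm genus}(|O|)=\gamma\le3$ one runs Ros' finer conformal volume estimate \cite{ros2006one} for index-one surfaces in the orbit space ($O$ playing the role of an index-one minimal surface, with $Q_O$ its Jacobi form). Finally, $\chi(\Sigma)=d\,\chi^{\mathrm{orb}}(O)$ gives ${\rm genus}(\Sigma)=1-\tfrac d2\chi^{\mathrm{orb}}(O)$; with $d\le K$, $\gamma\le3$, and the refined bound $\sum_i(1-1/n_i)\le 4-2\lfloor\gamma/2\rfloor$ this yields ${\rm genus}(\Sigma)\le 1+3K\le 4K$; and if $\Sigma\subset M^{prin}$ then the principal isotropy acts trivially near principal orbits, so $k=0$ and $\pi$ is an honest $K$-sheeted cover, whence $\chi^{\mathrm{orb}}(O)=2-2\gamma$ and ${\rm genus}(\Sigma)=1+K(\gamma-1)\le 1+2K$.

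I expect the two genuinely delicate points to be: (i) the Hersch balancing in the orbifold setting — one must check that the degree/center-of-mass argument survives the weight $\bar u_1$ and the branch points of $\phi$, and that $\psi_i\circ\pi$ is smooth enough to be an admissible normal $G$-variation of $\Sigma$ for $L_\Sigma$; and (ii) squeezing out the sharp constants, above all $\gamma\le3$ — the plain gonality bound only gives $\gamma\le5$, so one needs the finer conformal volume input of Ros' index-one argument (or an equivalent direct estimate), after which the clean numbers $4K$, $8\pi K/c_M$, $\sum(1-1/n_i)\le4$ and $1+2K$ drop out of Riemann--Hurwitz. A more routine preliminary is setting up the orbifold structure of $\Sigma/G$ and the descent of $L_\Sigma$ and of the Dirichlet energy.
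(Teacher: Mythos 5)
Your strategy is the paper's: reduce everything to Theorem \ref{Thm: main 1}, pass to the orientable $2$-orbifold $\Sigma/G$ with cone points, balance a conformal (branched) map to the sphere against the $G$-invariant first eigenfunction $u_1$ via Hersch's center-of-mass argument, test the index-one equivariant Jacobi form with the resulting coordinate functions, and close with the Gauss equation, orbifold Gauss--Bonnet, the gonality bound $m\le\lfloor(\gamma+3)/2\rfloor$, and Riemann--Hurwitz. Two points, however, need attention. The genuine gap as written is your handling of ${\rm genus}(\pi(\Sigma))\le 3$: you claim the gonality bound only yields $\gamma\le 5$ and defer the sharp bound (and the refined bound on $\sum_i(1-1/n_i)$) to ``Ros' finer conformal volume estimate in the orbit space'', which is neither carried out nor obviously applicable --- \cite{ros2006one} concerns index-one minimal surfaces in $3$-manifolds of nonnegative Ricci curvature, not a weighted quadratic form on a cone orbifold. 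But the appeal is unnecessary: your own inequality $2c_M\,{\rm Area}(\Sigma)\le 4\pi d\bigl(2m+2-2\gamma-\sum_i(1-\tfrac1{n_i})\bigr)$ plus the parity hidden in the floor already finishes. Positivity of the left-hand side forces $2m+2-2\gamma-\sum_i(1-\tfrac1{n_i})>0$; if $\gamma$ is even then $2m\le\gamma+2$, hence $\gamma+\sum_i(1-\tfrac1{n_i})<4$, and if $\gamma$ is odd then $2m\le\gamma+3$, hence $\gamma+\sum_i(1-\tfrac1{n_i})<5$. In all cases $\gamma\le 3$, $\sum_i(1-\tfrac1{n_i})\le 4$, $2\gamma-2+\sum_i(1-\tfrac1{n_i})<6$, and Riemann--Hurwitz gives ${\rm genus}(\Sigma)<1+3d\le 4K$ together with ${\rm Area}(\Sigma)\le 8\pi K/c_M$; this is literally the paper's computation, phrased there as $2\gamma-2+\sum_i(1-\tfrac1{n_i})<2\lfloor(\gamma+3)/2\rfloor$.

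The second issue is the one you flag yourself: admissibility of $\psi_i\circ\pi$ across the exceptional orbits and the energy identity for the singular quotient metric. This is exactly what the paper's construction is designed to sidestep: it never tests on the orbifold directly, but upstairs on $\Sigma$ with $\eta_r\tilde h_j\nu$, where $\eta_r$ is a $G$-invariant logarithmic cutoff vanishing near $\Sigma\setminus\Sigma^{prin}$; the balancing is done with weight $\eta_r u_1$, the conformal maps are taken on the truncated smooth surface $\underline{\Sigma}_r$ into $\mathbb{S}^m$ via the Li--Yau conformal area, and the cutoff errors $(1+\epsilon)$ and $(1+\tfrac1\epsilon)\int_\Sigma|\nabla\eta_r|^2$ disappear as $r\to0$, $\epsilon\to0$. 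Your direct-on-the-orbifold variant can be made rigorous (a holomorphic $\phi$ on the quotient Riemann surface pulls back to a smooth conformal branched map on $\Sigma$, and the cone points have measure zero), but in the proposal this is asserted rather than proved. A minor remark: the paper shows $\Sigma^{prin}\subset M^{prin}$, so the cover $\Sigma\to\Sigma/G$ has degree exactly $K$; your weaker $d\le K$ is enough for all the stated upper bounds, so this costs nothing.
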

\begin{proof}
	By Theorem \ref{Thm: main 1}, $\Sigma$  is a closed embedded connected minimal $G$-surface with a $G$-invariant unit normal $\nu$ so that ${\rm Area}(\Sigma) = W^G(M)$ and ${\rm Index}_G(\Sigma)=1$. 
	By Lemma \ref{Lem: hypersurface in positive Ricci manifold}, $\Sigma$ has an induced orientation. 
	Additionally, since the unit normal $\nu$ is $G$-invariant, the actions of $G$ on $\Sigma$ are also orientation preserving. 
	Therefore, the orbifold $\underline{ \Sigma}$ induced by $(\Sigma, G)$ is an orientable closed $2$-orbifold whose underlying space is the quotient distance space $(\pi(\Sigma), \dist_{\Sigma/G}) $. 
	
	Let $\Sigma^{prin}$ be the union of principal orbits for the $G$-action on $\Sigma$, and $\underline{ \Sigma}^{prin}$ be the orbifold induced by $(\Sigma^{prin},G)$. 
	Note an orbit $G\cdot p$ is principal in $\Sigma$ (or $M$) if and only if the slice representation of $G_p$ on ${\bf N}_p^\Sigma G\cdot p$ (or ${\bf N}_pG\cdot p$) is trivial (c.f. \cite[Corollary 2.2.2]{berndt2016submanifolds}). 
	Hence, by the $G$-invariance of $\nu$, we see $\Sigma^{prin}\subset M^{prin}$ and thus $K=\# G\cdot p = \#G\cdot q$ for all $p\in \Sigma^{prin}$ and $q\in M^{prin}$. 
	Additionally, it follows from \cite[Chapter IV, Theorem 3.3]{bredon1972introduction} that there is an induced Riemannian metric $g_{_{\underline{\Sigma}}}$ on $\underline{\Sigma}^{prin}$ so that $\pi: \Sigma^{prin}\to \underline{\Sigma}^{prin}$ is an Riemannian submersion. 
	Moreover, since $G$ acts on $\Sigma$ by orientation preserving isometries, the singular points $\underline{\Sigma} \setminus\underline{\Sigma}^{prin} $ are a finite number of cone points $\{[p_i]\}_{i=1}^k$ of orders $n_1,\dots, n_k$. 
	By the orbifold version of Gauss-Bonnet theorem (c.f. \cite[Proposition 2.17]{cooper2000three}), we have 
	\begin{equation}\label{Eq: Gauss-Bonnet}
		\int_{\pi(\Sigma)} K_{\underline{\Sigma}} ~dA_{g_{_{\underline{\Sigma}}}} = 2\pi(\chi(\underline{\Sigma} ) ) = 2\pi\Big( 2-2{\rm genus}(\pi(\Sigma)) - \sum_{i=1}^k(1-\frac{1}{n_i} ) \Big),
	\end{equation}
	where $K_{\underline{\Sigma}} $ is the Gauss curvature of $(\underline{\Sigma}^{prin}, g_{_{\underline{\Sigma}}} )$, and the integral is taken over $\underline{\Sigma}^{prin}$. 
	
	For any $r>0$ small enough, let $\Sigma_r:=\Sigma\setminus \cup_{i=1}^k B_r^G(p_i )$, and $\eta_r $ be the $G$-invariant logarithmic cut-off function on $\Sigma$ given by
	$$ \eta_{r}(p):=
			\left\{\begin{array}{lr}
						0, & d(p) \in [0, r] 
					\\ 2-\frac{2 \log d(p)}{\log r}, & d(p)\in (r,\sqrt{r}] 
					\\ 1, & d(p)\in (\sqrt{r}, \infty)
			 \end{array}\right. ,$$
	where $d(p) := \dist_\Sigma(p, \Sigma\setminus\Sigma^{prin} ) = \dist_\Sigma(p, \cup_{i=1}^k G\cdot p_i)$. 
	Define then $\underline{\Sigma}_r := \underline{\Sigma}\setminus \cup_{i=1}^kB_r([p_i] )$. 
	Note $(\underline{\Sigma}_r, g_{_{\underline{\Sigma}}})$ is a smooth Riemannian manifold (with boundary). 
	We can take any conformal immersion $\phi: \underline{\Sigma}_r \to \mathbb{S}^m$, $m\geq 2$, and define $P:{\rm Conf}(\mathbb{S}^m)\to \mathbb{B}^{m+1}_1(0)$ by
	$$P(h) :=\frac{1}{\int_\Sigma \eta_r u_1}\Big(\int_{\Sigma}(\eta_r u_1)(h_1\circ \phi\circ \pi), \dots, \int_{\Sigma}(\eta_r u_1)(h_{m+1}\circ \phi\circ \pi) \Big),  $$
	where $h=(h_1,\cdots,h_{m+1})\in {\rm Conf}(\mathbb{S}^m)$ is any conformal diffeomorphism of $\mathbb{S}^m$ (under the standard metric), and $u_1:\Sigma\to\R_+$ is the first ($G$-invariant) eigenfunction of $L_\Sigma$. 
	Since $u_1>0$ and $\sum_{j=1}^{m+1} h_j^2=1$, one easily verifies $P$ is well-defined. 
	Meanwhile, for each $x\in \mathbb{B}^{m+1}$, define a conformal diffeomorphism $h_x\in {\rm Conf}(\mathbb{S}^m)$ as in \cite[(1.1)]{montiel1986minimal} by 
	$$h_x(y)=\frac{y+(\mu\langle x, y\rangle+\lambda) x}{\lambda(\langle x, y\rangle+1)}, $$
	where $\lambda = (1-|x|^2)^{-1/2} $ and $\mu=(\lambda -1)|x|^{-2}$. 
	Then we have a continuous map $f: \mathbb{B}^{m+1}_1(0) \to \mathbb{B}^{m+1}_1(0)$ given by $f(x)= P(h_x)$, which can be continuously extended to $\partial \mathbb{B}^{m+1}_1(0) = \mathbb{S}^{m}$ by the identity map. 
	Note $\Clos(\mathbb{B}^{m+1}_1(0)) $ is homotopic to $f(\Clos(\mathbb{B}^{m+1}_1(0)))$, and $\Clos(\mathbb{B}^{m+1}_1(0)) \setminus \{x\}$ is homotopic to $\mathbb{S}^{m}$ for any $x\in \mathbb{B}^{m+1}_1(0)$. 
	Hence, we must have $f$ is surjective. 
	In particular, there exists $h=(h_1,\dots, h_{m+1})\in {\rm Conf}(\mathbb{S}^m)$ so that $P(h)=0$. 
	Thus, $\{\tilde{h}_j := h_j\circ\phi\circ\pi\}_{j=1}^{m+1}$ are $G$-invariant smooth functions on $\Sigma_r$ so that 
	$$ \sum_{j=1}^{m+1}\tilde{h}_j^2=1\quad {\rm and} \quad \int_\Sigma u_1\cdot (\eta_r\tilde{h}_j) = 0, ~ \forall j=1,\dots,m+1.$$
	
	Since ${\rm Index}_G(\Sigma)=1$, we see $\delta^2\Sigma(\eta_r\tilde{h}_j \nu )\geq 0$ for all $j=1,\dots,m+1$, and 
	\begin{eqnarray*}
		\int_{\Sigma_{\sqrt{r}}} \Ric_M(\nu,\nu) + |A|^2 &\leq & \int_\Sigma (\Ric_M(\nu,\nu) + |A|^2)\eta_r^2 = \int_\Sigma (\Ric_M(\nu,\nu) + |A|^2)\sum_{j=1}^{m+1}(\eta_r \tilde{h}_j)^2
		\\ &\leq & \int_\Sigma \sum_{j=1}^{m+1} |\nabla (\eta_r \tilde{h}_j) |^2
		\\ &\leq & \int_\Sigma \sum_{j=1}^{m+1} \Big[ (1+\epsilon)|\nabla \tilde{h}_j |^2 \eta_r^2 + (1+ \frac{1}{\epsilon} )|\nabla \eta_r  |^2\tilde{h}_j^2\Big]
		\\ &\leq & (1+\epsilon)K \cdot \int_{\underline{\Sigma}_r}\sum_{j=1}^{m+1} |\nabla h_j\circ\phi |^2 + (1+ \frac{1}{\epsilon} ) \int_\Sigma |\nabla \eta_r  |^2
		\\ &=& 2(1+\epsilon) K \cdot {\rm Area}(\underline{\Sigma}_r; (h\circ\phi)^*g_{_{\mathbb{S}^{m+1}}} ) + (1+ \frac{1}{\epsilon} ) \int_\Sigma |\nabla \eta_r  |^2,
	\end{eqnarray*}
	where $\epsilon>0$ is any constant, ${\rm Area}(\underline{\Sigma}_r; (h\circ\phi)^*g_{_{\mathbb{S}^{m+1}}} )$ is the area of $\underline{\Sigma}_r$ under the conformal metric $(h\circ\phi)^*g_{_{\mathbb{S}^{m+1}}}$, and the co-area formula is used in the last inequality. 
	Let $A_c(m,\underline{\Sigma}_r )$ be the {\em $m$-conformal area} of $\Sigma$ defined as in \cite{li1982new}: 
	$$ A_c(m,\underline{\Sigma}_r ) := \inf_\phi \sup_{h\in{\rm Conf}(\mathbb{S}^{m}) } {\rm Area}(\underline{\Sigma}_r; (h\circ\phi)^*g_{_{\mathbb{S}^{m}}} ),  $$
	where the infimum is taken over all non-degenerated conformal map $\phi$ of $\underline{\Sigma}_r$ into $\mathbb{S}^{m}$. 
	Since $\phi: \underline{\Sigma}_r \to \mathbb{S}^m$ is arbitrary conformal immersion in the above computation, we have 
	$$\int_{\Sigma_{\sqrt{r}}} \Ric_M(\nu,\nu) + |A|^2  \leq 2(1+\epsilon )K \cdot A_c(m,\underline{\Sigma}_r ) + (1+ \frac{1}{\epsilon} ) \int_\Sigma |\nabla \eta_r  |^2  . $$
	By \cite[Chapter IV, Remark 5.5.1]{hartshorne1977algebraic}, every closed orientable surface can be conformally branched over $\mathbb{S}^2$ with degree $\lfloor ({\rm genus}+3)/2\rfloor$, where $\lfloor a\rfloor$ is the integer part of $a\in\R_+$. 
	It then follows from \cite[Fact 1, 5]{li1982new} that $A_c(m,\underline{\Sigma}_r ) \leq 4\pi \lfloor\frac{{\rm genus}(\pi(\Sigma))+3}{2}\rfloor$, and thus 
	$$ \int_{\Sigma_{\sqrt{r}}} \Ric_M(\nu,\nu) + |A|^2  \leq 4\pi (1+\epsilon )K \cdot 2\Big\lfloor\frac{{\rm genus}(\pi(\Sigma))+3}{2}\Big\rfloor + (1+ \frac{1}{\epsilon} ) \int_\Sigma |\nabla \eta_r  |^2  . $$
	Since $\int_\Sigma |\nabla \eta_r  |^2\to 0$ as $r\to 0$, we first take $r\to 0$ and then let $\epsilon \to 0$, which gives
	$$ \int_{\Sigma} \Ric_M(\nu,\nu) + |A|^2 \leq 4\pi K \cdot 2\Big\lfloor\frac{{\rm genus}(\pi(\Sigma))+3}{2}\Big\rfloor. $$
	Denote by $\{e_i\}_{i=1}^2$ a local orthonormal basis on $\Sigma$. 
	Since $\Ric_M>0$, 
	we have
	$$\Ric_M(\nu,\nu) + |A|^2 = \sum_{i=1}^2\Ric_M(e_i,e_i) -2K_\Sigma > -2K_\Sigma $$ 
	on $\Sigma^{prin}$, where $K_\Sigma$ is the Gauss curvature of $\Sigma$. 
	Therefore, by the co-area formula,
	$$ -2K \int_{\underline{\Sigma}} K_{\underline{\Sigma}} = -2\int_\Sigma K_\Sigma < \int_{\Sigma} \Ric_M(\nu,\nu) + |A|^2 \leq 4\pi K \cdot 2\Big\lfloor\frac{{\rm genus}(\pi(\Sigma))+3}{2}\Big\rfloor . $$
	Then, it follows from the above strict inequality and the Gauss-Bonnet formula (\ref{Eq: Gauss-Bonnet}) that ${\rm genus}(\pi(\Sigma)) \leq 3$, $\sum_{i=1}^k(1-\frac{1}{n_i} )\leq 4$, ${\rm genus}(\pi(\Sigma)) +\sum_{i=1}^k(1-\frac{1}{n_i} ) < 5$, and 
	$$ {\rm genus}(\Sigma ) = 1+ K \Big[ {\rm genus}(\pi(\Sigma)) - 1 + \sum_{i=1}^k(1-\frac{1}{n_i} ) \Big] <1+ 4K. $$ 
	In particular, if $\Sigma\subset M^{prin} $, then $\sum_{i=1}^k(1-\frac{1}{n_i} )=0$ and ${\rm genus}(\Sigma)=1+K({\rm genus}(\pi(\Sigma)) -1) \leq 1+2K$. 
	Finally, we see
	\begin{eqnarray*}
		2c_M W^G(M) &\leq & \int_\Sigma \sum_{i=1}^2\Ric_M(e_i,e_i) \\
		&\leq & 4\pi K\cdot 2\Big\lfloor\frac{{\rm genus}(\pi(\Sigma))+3}{2}\Big\rfloor + 2K \int_{\underline{\Sigma}} K_{\underline{\Sigma}} \\
		&= & 4\pi K\cdot \Big(2- 2{\rm genus}(\pi(\Sigma)) - \sum_{i=1}^k(1-\frac{1}{n_i} ) + 2\Big\lfloor\frac{{\rm genus}(\pi(\Sigma))+3}{2}\Big\rfloor \Big)\\ 
		&\leq & 16\pi K,
	\end{eqnarray*}
	where $\Ric_M \geq c_M >0 $. 
\end{proof}




\bibliographystyle{abbrv}

\providecommand{\bysame}{\leavevmode\hbox to3em{\hrulefill}\thinspace}
\providecommand{\MR}{\relax\ifhmode\unskip\space\fi MR }
\providecommand{\MRhref}[2]{%
  \href{http://www.ams.org/mathscinet-getitem?mr=#1}{#2}}
\providecommand{\href}[2]{#2}

\bibliography{reference}   

\end{document}